\newcommand{\C}{\mathbb{C}}
\newcommand{\Z}{\mathbb{Z}}
\newcommand{\M}{\mathcal{M}}
\def\be{\begin{equation}}
\def\ee{\end{equation}}
\newtheorem{theorem}{Theorem}[section]
\newtheorem{prop}[theorem]{Proposition}
\newtheorem{lemma}[theorem]{Lemma}
\newtheorem{remark}[theorem]{Remark}
\newtheorem{definition}[theorem]{Definition}
\newtheorem{cor}[theorem]{Corollary}
\begin{document}

\title{Uniruled symplectic divisors}

\author{Tian-Jun Li \& Yongbin Ruan}
\address{School  of Mathematics\\  University of Minnesota\\ Minneapolis, MN 55455}
\email{tjli@math.umn.edu}
\address{Department of Mathematics\\ University of Michigan\\ Ann Arbor,
MI}\email{ruan@umich.edu}

\maketitle \tableofcontents
\section{Introduction}

    This is the second in a series of papers devoted to the symplectic birational geometry
    program. A fundamental problem in symplectic geometry is to generalize
    birational geometry to symplectic category. Such a
    generalization should be viewed as the first step towards the
    classification of symplectic manifolds. Hopefully, it
    will provide a better understanding of birational geometry
    itself as well. In \cite{HLR}, the authors set up some general framework for such a symplectic
    birational geometry programs. Among other things, we proposed to use Guillemin-Sternberg's birational
    cobordisms
    to replace birational maps. Using sophisticated GW-machinery
      we also  settled successfully
     the  fundamental birational cobordism invariance of uniruledness.

     In symplectic category uniruledness is defined via
    Gromov-Witten invariants. More precisely,  a symplectic manifold is called uniruled if there is a
    non-vanishing genus zero GW-invariant $<[pt], \alpha_2, \cdots, \alpha_k>_{0,A}\neq 0$ for some nonzero
    class $A$.
     Such a class $A$ is referred as a uniruled class.

      Koll\'ar-Ruan showed that  all projective uniruled manifolds are
     symplectic uniruled.
    There are many reasons to believe that this is a better generalization of
    projective uniruledness
    to symplectic category than the geometric notion of ``a manifold covered by rational curve". However, many
    obvious properties of algebraic uniruledness are no longer obvious in our context. The birational cobordism invariance
     is
    such an example. In fact, it is related to a rather difficult problem in Gromov-Witten theory in terms of finding
    blow-up formula of Gromov-Witten invariants.

    The next step of symplectic birational geometry program is to
    study various surgery operations such as contraction, flip and
    flop.
   The main perspective comes from the basic fact in the
   projective birational program that
   various  birational surgery operations such as
   contraction and
    flop have a common feature: the subset being operated on
    is necessarily
    uniruled.
    Therefore in our program we also need to understand {\it uniruled symplectic  submanifolds}. In this article and its sequel, we
    focus on symplectic uniruled divisors.   Our key observation is that, as in the projective
    birational program,  such a divisor
    admits a dichotomy  depending on the positivity of its normal
    bundle. If the normal bundle is non-negative in certain sense, it
    will force the ambient manifold to be uniruled. If the normal
    bundle is negative in certain sense, we can contract it to
    obtain a simpler symplectic manifold. In this article, we
    treat the case of non-negative normal bundles.  In
    \cite{tLR} the negative case will be dealt with in
    dimension six.

 To state our main theorem  we need the notion of a
minimal uniruled class, which is a uniruled class with minimal
symplectic area among all uniruled classes.
     Suppose that $\iota: D\rightarrow (X, \omega)$
     is a  symplectic submanifold of codimension 2, i.e. a smooth divisor. Let $N_D$ be the normal bundle of
     $D$ in $X$. Notice that $N_D$ is a $2-$dimensional symplectic vector bundle and
     hence has a well defined first Chern class.  We will often  use $N_D$ to denote the first Chern class.

\begin{theorem} \label{main1} Suppose $D$ is uniruled and $A$ is a minimal uniruled class of $D$
such that
\begin{equation}\label{k}
<\iota^*\alpha_1, \cdots, \iota^*\alpha_l, [pt], \beta_2, \cdots,
\beta_k>^D_{0,A}\neq 0
\end{equation} for $k\leq N_D(A)+1$. Then $(X,\omega)$ is uniruled.
\end{theorem}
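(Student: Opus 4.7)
The plan is to promote the given nonvanishing $D$-Gromov--Witten invariant to a nonvanishing genus-zero Gromov--Witten invariant of $(X,\omega)$ containing $[pt]_X$, thereby certifying uniruledness of $X$ with uniruled class $\iota_*A$. The natural candidate is obtained by lifting each insertion: replace $\iota^*\alpha_i$ by $\alpha_i\in H^*(X)$, the point class $[pt]$ by $[pt]_X$, and each $\beta_j$ by its Gysin pushforward $\iota_*\beta_j\in H^{*+2}(X)$. A virtual-dimension computation gives $v_X-v_D=2(N_D(A)+1)$, while the Gysin lifts account for $2k$ of the insertion-degree shift; the residual $2(N_D(A)+1-k)$ can be absorbed by appending auxiliary higher-degree insertions (for instance $[D]$-powers, or Gysin pushforwards of classes pulled back from $D$) at new marked points, and the hypothesis $k\le N_D(A)+1$ is precisely what makes the number of such auxiliary insertions non-negative.

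To show the proposed $X$-invariant is nonzero, I would degenerate $X$ along a tubular neighbourhood of $D$ into the normal-crossing configuration $X\cup_D Y$ with $Y:=\mathbb{P}(N_D\oplus\mathcal{O}_D)$, and apply the Li--Ruan symplectic-sum / degeneration formula. This expresses the absolute $X$-invariant as a finite sum over admissible bipartite distributions of marked points, class splittings, and partitions of intersection multiplicities along $D$, with each summand a product of relative invariants of $(X,D)$ and $(Y,D_\infty)$. Minimality of $A$ as a uniruled class of $D$ is then used in an essential way: in any admissible splitting, the portion of the curve class on the $(X,D)$-side would push forward to $D$ representing a uniruled class of symplectic area strictly less than $\omega(A)$, contradicting the minimality hypothesis. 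Hence the only surviving summands carry the entire curve class on the $(Y,D_\infty)$-side, in a section class projecting to $A\in H_2(D)$.

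The remaining calculation is the $(Y,D_\infty)$-relative invariant, computed via virtual localization for the fibrewise $\C^*$-action on $Y\to D$, whose fixed loci are the zero section $D_0\cong D$ and the infinity section $D_\infty$. Contributions localize onto stable maps into $D_0$ representing the class $A$, and the resulting integral over $[\overline{M}_{0,\bullet}(D,A)]^{\mathrm{vir}}$ evaluates to the hypothesized $D$-invariant cupped with the equivariant Euler class of the normal obstruction bundle $R^0\pi_*f^*N_D$ (of rank $N_D(A)+1$, with $R^1\pi_*f^*N_D=0$ forced by genus zero together with $N_D(A)\ge k-1\ge 0$). The auxiliary $[D]$-type insertions pair against the ``excess'' Euler class factors, producing a nonzero numerical multiple of the given $D$-invariant, and hence a nonzero $X$-invariant in the class $\iota_*A$ with a $[pt]_X$-insertion.

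I expect the principal obstacle to lie in controlling the combinatorics of the degeneration-formula sum: one must verify that the non-leading intersection-multiplicity partitions along $D$ do not produce cancelling contributions against the dominant one, and the sharp inequality $k\le N_D(A)+1$ is very likely the exact criterion that isolates a single dominant splitting. A secondary delicate point is the correct translation of the Gysin-lifted insertions $\iota_*\beta_j$ into cohomology-weighted relative insertions along $D_\infty$ in the degenerated picture, and then extracting the original $\beta_j$ on $D_0$ under the $\C^*$-localization; this requires the full relative Gromov--Witten formalism and a careful matching of cohomology classes across the normal crossing.
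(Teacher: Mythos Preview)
Your overall architecture---degenerate $X$ along $D$ into $\tilde X\cup_D P_D$ with $P_D=\mathbb P(N_D\oplus\underline{\mathbb C})$, distribute the point and the $\beta_j$'s to the $P_D$-side as classes supported on $D_0$, and compute the leading $(P_D,D_\infty)$-relative invariant by $\mathbb C^*$-localization---is exactly the paper's strategy. The localization step also works essentially as you describe: the single $D_0$-vertex fixed locus contributes $c\cdot\langle\iota^*\varpi,[pt],\beta_2,\dots,\beta_k\rangle^D_A$ with $c\neq 0$, which is the paper's Theorem~\ref{nonvanishing}. (Your claim that $R^1\pi_*f^*N_D=0$ is not literally correct for reducible domains, but the virtual normal bundle has rank zero when $k=N_D(A)+1$, and that is what forces $c\neq 0$.)

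The genuine gap is your assertion that minimality of $A$ forces all other summands in the degeneration formula to vanish. This is false, and your phrasing of the argument is also confused: the $(\tilde X,D)$-side class lives in $H_2(\tilde X)$ and does not ``push forward to $D$'' as anything. What minimality actually buys, via Theorem~\ref{v}, is that on the $P_D$-side the component carrying the point insertion cannot be a \emph{non-fiber} class of $\omega$-area $<V$; it does \emph{not} rule out that component being a (possibly multiply covered) fiber. Those fiber-class configurations (type (iii) in the paper) survive and produce genuine contributions: relative invariants of $(\tilde X,D)$ with a relative point insertion, in classes of strictly smaller area. So you will not be able to isolate a single nonvanishing $X$-invariant. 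The paper's resolution is not to compute one invariant but to organize all these surviving terms into a partially ordered set $I_{D-pt}$ and prove that the degeneration formula gives an invertible lower-triangular transformation between the vector of absolute invariants and the vector of associated relative invariants (Theorem~\ref{minimal}). Since the leading relative entry (your $(P_D,D_\infty)$-invariant) is nonzero, the whole absolute vector is nonzero, so \emph{some} genus-zero $X$-invariant with a point insertion is nonzero---and one then invokes Theorem~\ref{descendent} to conclude. Your concluding paragraph correctly anticipates that the combinatorics is the obstacle; what you are missing is that the fix is triangularity rather than vanishing.
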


In particular, we have

\begin{cor} \label{homologically injective} Suppose $D$ is
uniruled and the normal bundle $N_D$ is non-negative on a minimal
uniruled class. Then $X$ is uniruled if either

\begin{itemize}
\item $D$ is homologically injective or
\item $D$ is projectively uniruled.
\end{itemize}
\end{cor}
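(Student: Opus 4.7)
The plan is to deduce both bullets from Theorem~\ref{main1} by arranging, in each case, a non-vanishing genus zero Gromov-Witten invariant on $D$ in the minimal uniruled class $A$ whose insertions fit the template (\ref{k}), with the number $k$ of non-pullback insertions bounded by $N_D(A)+1$. The non-negativity hypothesis gives $N_D(A)+1\geq 1$, so at least one free insertion slot is available, which is precisely what is needed to house the mandatory $[pt]$.

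By definition of uniruled class, there exist classes $\gamma_2,\ldots,\gamma_m\in H^*(D)$ with
\[
\langle [pt],\gamma_2,\ldots,\gamma_m\rangle^D_{0,A}\neq 0.
\]
For the first bullet, homological injectivity of $\iota$ provides a surjection $\iota^*\colon H^*(X;\Q)\to H^*(D;\Q)$, so each $\gamma_i$ lifts to a class $\alpha_i\in H^*(X)$ with $\iota^*\alpha_i=\gamma_i$. The invariant then rewrites as
\[
\langle \iota^*\alpha_2,\ldots,\iota^*\alpha_m,[pt]\rangle^D_{0,A}\neq 0,
\]
which is of the form (\ref{k}) with $l=m-1$ pullback insertions and only the mandatory $[pt]$ as a free insertion, i.e.\ with $k=1$. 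The bound $k\leq N_D(A)+1$ now reduces to the non-negativity of $N_D(A)$, and Theorem~\ref{main1} yields uniruledness of $X$.

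For the second bullet, projective uniruledness of $D$ makes available the tools of algebraic geometry. Through Koll\'ar-Ruan's comparison between algebraic and symplectic Gromov-Witten invariants one expects to produce a non-vanishing invariant in the class $A$ -- for instance a 1-point invariant counting algebraic rational curves through a generic point, possibly supplemented by pullback insertions to satisfy the virtual dimension equation -- already of the form (\ref{k}) with $k=1$. The conclusion again follows from Theorem~\ref{main1} under $1\leq N_D(A)+1$.

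The substantive obstacle is concentrated in the second bullet: one must verify that projective uniruledness of $D$ promotes to a non-vanishing symplectic GW invariant of the precise form (\ref{k}), with all non-$[pt]$ insertions realised as pullbacks from $X$ via the projective structure. The first bullet, by contrast, is an almost immediate consequence of homological surjectivity combined with Theorem~\ref{main1}.
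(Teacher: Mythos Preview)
Your treatment of the first bullet is correct and matches the paper's proof: homological injectivity gives cohomological surjectivity, every insertion becomes a pullback, and one lands in the template (\ref{k}) with $k=1$.

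For the second bullet, however, you have correctly located where the difficulty lies but have not resolved it. The paper's argument is not a vague appeal to Koll\'ar--Ruan; it relies on the sharpened form stated as Theorem~\ref{projective} (due to McDuff), which asserts that for a projectively uniruled manifold a \emph{minimal} uniruled class $A$ carries a nonzero three-point invariant
\[
I_{p,q}=\langle [pt],[\omega_D^{\,p}],[\omega_D^{\,q}]\rangle^D_A
\]
whose two non-point insertions are powers of the symplectic class. The decisive observation, which you do not make, is then purely formal: since $\omega_D=\iota^*\omega$, one has $[\omega_D^{\,p}]=\iota^*[\omega^p]$, so both extra insertions are automatically restriction classes. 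This puts $I_{p,q}$ into the form (\ref{k}) with $k=1$, and Theorem~\ref{main1} applies exactly as in the first bullet.

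Your suggestion of a ``1-point invariant counting algebraic rational curves through a generic point'' does not work as stated: the virtual dimension typically forces additional insertions, and without knowing they are powers of $\omega$ there is no reason they lift to $X$. The whole content of the projective case is that Theorem~\ref{projective} guarantees such insertions can be taken to be $\omega$-powers, which are always pullbacks regardless of whether $\iota^*$ is surjective.
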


Another consequence of Theorem \ref{main1} is

\begin{cor}
   Suppose that $X$ is a non-uniruled manifold containing a
   uniruled divisor satisfying (\ref{k}). Then, $k>N_D(A)+1$. In
   particular, $N_D(A)<0$ if $k=1$.
   \end{cor}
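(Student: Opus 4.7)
The plan is to obtain this corollary as the direct contrapositive of Theorem \ref{main1}, with no additional input required. Theorem \ref{main1} has three hypotheses---$D$ uniruled, $A$ a minimal uniruled class of $D$, and the nonvanishing (\ref{k}) with the numerical constraint $k\leq N_D(A)+1$---and one conclusion, namely that $(X,\omega)$ is uniruled. The corollary assumes two of those three hypotheses (uniruled divisor, equation (\ref{k}) with $A$ minimal uniruled) together with the negation of the conclusion ($X$ non-uniruled), and asserts that the remaining hypothesis must fail. So I would simply argue: if instead $k\leq N_D(A)+1$ held, Theorem \ref{main1} would produce a uniruled ambient manifold, contradicting our assumption on $X$. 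Hence $k>N_D(A)+1$.

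The ``in particular'' clause follows by plugging $k=1$ into the strict inequality $k>N_D(A)+1$, which rearranges to $N_D(A)<0$. There is really no obstacle to overcome here; the only thing to be careful about is noting that the $A$ appearing in (\ref{k}) is, by the convention set up for Theorem \ref{main1}, a minimal uniruled class of $D$, so that Theorem \ref{main1} applies verbatim. One could phrase the whole argument in a single sentence, but I would keep the contrapositive reasoning explicit for the reader's convenience.
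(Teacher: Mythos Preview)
Your proposal is correct and matches the paper's approach: the corollary is stated immediately after Theorem \ref{main1} with no separate proof, being an evident contrapositive. Your care in noting that $A$ must be a minimal uniruled class (inherited from the setup of Theorem \ref{main1}) is appropriate, and the ``in particular'' clause follows exactly as you say.
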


   The above corollary is the first step towards the constructions
   of symplectic divisoral contraction in the third paper of the
   series \cite{tLR}.

 The idea of proof is similar to that of
\cite{HLR}. We partition insertions of $D$ into
     two types, global insertions  $\iota^*\beta_i$ and local insertions $\alpha_j$. The
    relative/absolute correspondence of
    Maulik-Okounkov-Pandharipande  interchanges relative
    GW-invariants with certain admissible absolute GW-invariants having a similar partition of insertions.
    we extend the correspondence to include certain
    super-admissible GW-invariants. In addition, as in \cite{Ga},  in
    this case invariants of the divisor enters the ``extended"
    relative/absolute correspondence in a nontrivial way. In fact,
    this is our main strategy to lift a minimal uniruled invariant of divisor to
    a uniruled invariant of ambient manifold.

As a by product our main theorem also gives a rather general {\em
from divisor to ambient space} inductive construction of uniruled
symplectic manifolds.

     Theorem \ref{main1}
    can be easily applied in a variety of  situations, giving a comprehensive generalization of  some  early results of
    McDuff.
    For instance, if
    $D$ is a Fano manifold with  $b_2=1$ and non-negative normal bundle, then $X$ must
    be uniruled.  The case of  $D=\mathbb P^{n-1}$ is studied by McDuff in \cite{Mc1}, \cite{Mc2} (see section 2). We list many more
    examples in section 6. We should mention that another
    obvious
    inductive construction is {\em from fiber to total space}.
    In particular, a uniruled and homologically  injective fiber in a symplectic fibration will
    force the total space to be uniruled (Corollary  \ref{fibration}).
    Although it might be possible to derive this result as a consequence of our main theorem, it
    actually can be established by a simpler and classical argument.

    The paper is organized as follows. In section 2, we first review basic
    properties of uniruled symplectic manifolds. Then using direct geometric arguments we
    describe the fiber-to-total space approach, as well as  some  early
    examples of McDuff which motivate our divisor-to-ambient strategy.
    In section 3, we
    sketch a relative-divisor/absolute correspondence to connect absolute
    invariants with
    relative invariants and invariants of divisor. The new ingredient is the
    appearance of invariants of the divisor corresponding to the
    additional
    super-admissible absolute invariants.  To prove the main theorem, we need
    to study a more delicate version of our correspondence
    involving a point insertion (see section 3). This requires our extensive
    knowledge of relative invariants of $\mathbb P^1-$bundles. We establish
    it in  section 4 via several powerful techniques in GW theory. The main theorem is then proved in section 5. The applications
    will be studied in section 6.

    The authors would like to thank Dusa McDuff for her interest in this
    paper and for her numerous suggestions which corrected a number of  mistakes and greatly improved the presentation. We are also grateful to Davesh Maulik for
useful discussions.
    Both authors are supported by NSF.

 \section{Uniruled symplectic manifolds}

 \subsection{Definition and basic properties}

\begin{definition} Let $A\in H_2(X;{\mathbb Z})$ be a nonzero  class.
$A$ is said to be a uniruled class if there is a nonzero GW
invariant
\begin{equation}\label{invariant}\langle [pt], \alpha_2, \cdots ,
\alpha_k\rangle^X_A
\end{equation}
  with a point
insertion.

Let $A$ be a uniruled class.  $A$ is said to be a strongly uniruled
class if $k$ can be chosen to be $3$. $A$ is said to be minimal if
it has the smallest symplectic area among all uniruled classes.

\begin{remark}
 Clearly a
uniruled class $A$ is a spherical class with positive symplectic
area. Thus, in dimension 6 and higher, it follows from Gromov's
$h-$principle
 (see e.g. \cite{Ltj}) there is an
embedded symplectic sphere in the class $A$ passing through any
given point.  However, the converse is not true.

For the $4-$dimensional case see Proposition \ref{criterion}.

\end{remark}

\end{definition}

\begin{definition} $X$ is said to be (symplectically) uniruled if there is a uniruled
class, and strongly uniruled if there is a strongly uniruled class.
\end{definition}

The notion of strongly uniruled is studied e.g. in \cite{Lu1},
\cite{Lu2}, \cite{Lu3} and \cite{Mc3}.

\begin{remark}  If the insertions in (\ref{invariant})
are all of even degree, then the class is called an evenly uniruled
class. This notion is studied in \cite{Mc3} and there is provided a
beautiful characterizations of uniruledness in terms of  units of
the quantum cohomology ring.
\end{remark}

It is easy to see that  we could well use the more general
disconnected GW invariants to define this concept. Moreover, we have
the following basic property proved in \cite{HLR}.

\begin{theorem} \label{descendent} A symplectic manifold $X$ is uniruled if there is a nonzero
disconnected genus zero descendent  GW invariant involving a point
constraint.
\end{theorem}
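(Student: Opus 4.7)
The plan is to extract from a nonzero disconnected genus zero descendent invariant with a point constraint an ordinary (non-descendent) connected genus zero invariant with a point constraint, by two successive reductions: first disconnected to connected, then descendent to non-descendent.

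For the first reduction, I would use that the moduli stack $\overline{\M}_{0,\vec{n}}(X,A)$ of disconnected stable maps splits as a product over connected pieces, and the perfect obstruction theory and virtual class are multiplicative over this product. Consequently a disconnected genus zero GW invariant is equal to the product of the GW invariants attached to each connected component, with insertions distributed according to which marked point lies on which component. The point insertion $[pt]$ is carried by exactly one connected component, and nonvanishing of the total product forces each factor, in particular this distinguished one, to be nonzero. This yields a nonzero \emph{connected} genus zero descendent invariant of the form $\langle \tau_{d_1}([pt]), \tau_{d_2}(\alpha_2), \dots, \tau_{d_n}(\alpha_n)\rangle^X_{0,A}$ for some class $A$.

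For the second reduction, I would induct on the total descendent degree $\sum_i d_i$. The basic tool is the genus zero topological recursion relation, which for any marked point $j$ with $d_j\geq 1$ and any connected invariant with $n\geq 3$ markings expresses $\tau_{d_j}$ at marking $j$ as a sum over stable splittings of the source curve and of the homology class, of products of two invariants in which the descendent degree at marking $j$ has dropped by one and the remaining insertions (including $[pt]$) are distributed on one of the two bubble components, with a dual basis inserted at the connecting node. If the original invariant is nonzero, at least one term in the TRR sum is nonzero; the point insertion is carried along to one of the two factors, which remains a connected genus zero (descendent) invariant with a point constraint and strictly smaller total descendent degree. Iterating this, together with Step 1 whenever TRR output is read as a pair of connected invariants, the descendents at non-point markings are eliminated; descendents at the point marking are likewise traded away by applying TRR at some other marking and using the distributional nature of the splitting.

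The step I expect to be the main obstacle is the boundary of the induction, where the connected invariant under consideration has too few marked points for TRR or the string and dilaton equations to apply directly. Concretely, in genus zero with $n\leq 2$ markings the moduli spaces $\overline{\M}_{0,n}$ are not of positive dimension and the usual comparison formulas between $\psi$-classes under forgetting a marked point degenerate. These cases must be handled separately, either by noting that the dimension axiom forces the invariants to vanish unless specific numerical conditions are met, by artificially adjoining a $\tau_0(1)$ insertion (via the string equation read in reverse) to bring the count of markings up to the TRR-stable range, or by a direct analysis of $\overline{\M}_{0,n}(X,A)$ for small $n$. Apart from these stability-related bookkeeping issues, the rest of the argument is formal and relies only on the multiplicativity of the virtual class, TRR, and the classical evaluation of boundary contributions.
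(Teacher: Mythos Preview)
The paper does not prove this theorem; it is stated with the attribution ``proved in \cite{HLR}'' and used as a black box. So there is no in-paper argument to compare against directly.

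Your two-step outline (disconnected to connected via multiplicativity of the virtual class, then descendent to primary via genus-zero TRR) is the natural route and is essentially what the cited reference does. One point, however, deserves to be made explicit and is missing from your sketch. In the TRR step, when the curve splits as $A=A_1+A_2$, the factor carrying the $[pt]$ insertion may well have class zero. You do not lose the point constraint in that case, but only because of the following observation: a genus-zero class-zero invariant factors as a $\psi$-integral over $\overline{\M}_{0,m}$ times the classical product $\int_X [pt]\cup\gamma_2\cup\cdots\cup T_\mu$, and since $[pt]$ already has top degree, nonvanishing forces every other factor, in particular the diagonal insertion $T_\mu$, to be a multiple of $1$. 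Hence the dual class $T^\mu$ inserted on the other (nonzero-class) bubble is $[pt]$, and the point constraint is transferred across the node. Without this observation your induction in Step~2 does not obviously terminate in a nonzero-class invariant with a point.

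The same caution applies to Step~1. If your convention for disconnected invariants permits constant (class-zero) components, the point insertion could sit on one of them, and the product argument alone does not hand you a point-constrained connected invariant in a nonzero class; there is no diagonal splitting here to rescue you. In the applications in this paper each vertex carries a nonzero effective class by construction, so the issue does not arise, but for a freestanding proof you should either adopt that convention or treat this case separately.
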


This flexibility is important for the proof of another basic
property also proved in \cite{HLR}.

\begin{theorem} \label{blowup} Being uniruled is a birational cobordism property.
In particular, if $\tilde X$ is  a blow-up of $X$, then $X$ is
uniruled if and only $\tilde X$ is uniruled.
\end{theorem}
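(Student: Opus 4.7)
The plan is to reduce the birational cobordism statement to invariance under a single symplectic blow-up, and then prove the blow-up case via the degeneration formula together with the descendent flexibility of Theorem \ref{descendent}.

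First, every birational cobordism in the Guillemin-Sternberg sense admits a decomposition into elementary cobordisms, each of which realizes a weighted symplectic blow-up along a smooth symplectic submanifold (or its inverse, a blow-down). It therefore suffices to prove that $X$ is uniruled if and only if $\tilde X=\mathrm{Bl}_Z X$ is, for any smooth symplectic submanifold $Z\subset X$. For the easy direction $X\Rightarrow\tilde X$, I would start from a nonzero $\langle[pt],\alpha_2,\ldots,\alpha_k\rangle^X_{0,A}$, arrange the point insertion to lie off $Z$, and apply a standard blow-up formula to produce a nonzero genus-zero (possibly descendent) GW invariant of $\tilde X$ in a class pushing to $A$ and still carrying the point constraint; Theorem \ref{descendent} then yields uniruledness of $\tilde X$.

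For the harder direction $\tilde X\Rightarrow X$, I would apply the degeneration formula to the deformation of $X$ to the normal cone of $Z$, whose central fiber is $\tilde X\cup_E P$ with $P=\mathbb{P}(N_Z\oplus\underline{\mathbb{C}})$ and $E$ their common divisor. Given a uniruled absolute invariant on $\tilde X$, the Maulik-Okounkov-Pandharipande relative/absolute correspondence converts it into a nonzero relative invariant of $(\tilde X,E)$ carrying a point insertion. Inverting the degeneration formula, combined with the explicit $\mathbb{C}^*$-localizable relative invariants of the projective bundle $P\to Z$, rewrites this contribution as a sum of products of relative invariants of $(X,Z)$ and of $(P,E)$, and a second application of the relative/absolute correspondence extracts a nonzero absolute genus-zero descendent invariant on $X$ that still carries a point insertion. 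Theorem \ref{descendent} then concludes that $X$ is uniruled.

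The main obstacle is ensuring, after all these rearrangements, that some nonzero contribution still carries the point constraint on the $X$-side rather than being absorbed entirely into the $P$-piece. This is precisely where the strong form of Theorem \ref{descendent}, admitting disconnected and descendent input, is indispensable, because the summands produced by the degeneration formula on $X$ are typically disconnected curves with descendent insertions at the relative marked points where the splitting occurred; without that flexibility one cannot convert the output of the correspondence back into an honest uniruled invariant on $X$.
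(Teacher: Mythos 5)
The paper does not in fact prove this theorem here: it is recalled from \cite{HLR} and stated without proof, so the only thing to compare your sketch against is the strategy the present paper alludes to, namely the degeneration formula plus an extended relative/absolute correspondence for blow-ups. For the direction you call hard, $\tilde X\Rightarrow X$, your outline is broadly aligned with that strategy: degeneration to the normal cone of $Z$ with central fiber $\tilde X\cup_E P$, a relative/absolute correspondence to move between absolute and relative invariants, explicit control of $(P,E)$ via localization, and crucially the use of Theorem \ref{descendent} so that disconnected descendent output still certifies uniruledness. That last point is precisely the flexibility the paper highlights.

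The genuine gap is in the direction you call easy, $X\Rightarrow\tilde X$. You invoke ``a standard blow-up formula,'' but no such formula for genus-zero GW invariants exists in the generality required (blow-up along an arbitrary symplectic submanifold $Z$ of codimension at least two). Arranging the point insertion off $Z$ does not help: for a codimension-$\geq 2$ center, a generic rational curve through a generic point of $X$ still meets $Z$, so one cannot compare $\langle[pt],\ldots\rangle^X_{0,A}$ to an invariant of $\tilde X$ in a proper-transform class by naive lifting or a local argument in a tubular neighborhood, and there is no off-the-shelf identity to quote. In \cite{HLR} this direction is handled by the very same degeneration-to-the-normal-cone plus extended relative/absolute correspondence machinery that you reserve for the other direction; neither direction is free. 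A secondary caveat: an elementary Guillemin--Sternberg cobordism is a wall crossing for a Hamiltonian $S^1$-action and a priori gives a weighted (orbifold) modification rather than a single smooth blow-up or blow-down, so the reduction to a chain of honest symplectic blow-ups along smooth centers is itself a step that must be justified rather than asserted.
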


As mentioned in the introduction,  the notion of symplectically
uniruled is a natural extension of the fundamental notion in
algebraic geometry. More precisely,
 for projective
manifold, we have the following result of Koll\'ar and the second
author, further strengthened by McDuff.

\begin{theorem} \label{projective}A projectively
uniruled manifold is symplectically uniruled.  Moreover,  a minimal
uniruled class with respect to an ample class is strongly  uniruled
with both additional  insertion being  powers of a K\"ahler form.
\end{theorem}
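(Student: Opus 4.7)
The plan is to realize $A$ by a covering family of free rational curves via Mori theory and to extract the desired three-point invariant from the enumerative geometry of that family. By Mori--Koll\'ar--Miyaoka theory a projective uniruled manifold carries a covering family of rational curves, and a family of minimal $H$-degree is forced to be free by bend-and-break. I would first show that any minimal uniruled class $A$ with respect to $H$ is of this form: if the general algebraic curve in class $A$ through a general point degenerated, bend-and-break would produce an effective sub-class with strictly smaller $H$-degree still supporting a covering family, which by Koll\'ar's algebraic-to-symplectic comparison would be symplectically uniruled, contradicting minimality of $A$. A general representative $f\colon\mathbb{P}^1\to X$ therefore satisfies $f^{\ast}TX\cong\mathcal{O}(2)\oplus\bigoplus_{i\ge 2}\mathcal{O}(a_i)$ with $a_i\ge 0$, so $c_1(A)=2+\sum a_i\ge 2$.

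Next I would set up the invariant $\langle [\mathrm{pt}],H^{a},H^{c_1(A)-a}\rangle^X_{0,A}$ for a choice $0\le a\le c_1(A)$. The moduli space $\overline{M}_{0,3}(X,A)$ has virtual complex dimension $n+c_1(A)$, matching the total codimension of the insertions. Geometrically, fixing a general point $p$ together with generic representatives of $H^{a}$ and $H^{c_1(A)-a}$ cuts the moduli down to a zero-dimensional set of pointed free rational curves; ampleness of $H$, together with Bertini applied in the system $|H|$ and the fact that the evaluation from free curves in class $A$ is dominant, forces this count to be a positive integer.

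The main obstacle is identifying this classical count with the symplectic GW number, i.e.\ excluding contributions from the boundary of $\overline{M}_{0,3}(X,A)$. A reducible genus-zero stable map in class $A$ decomposes as $A=A_1+A_2$ with $H\cdot A_i>0$; if such configurations swept positively through a general point, one of the $A_i$ would itself support a covering family of rational curves of strictly smaller $H$-degree, again contradicting the minimality of $A$. Hence the moduli space is generically smooth of expected dimension, its virtual class agrees with the ordinary fundamental class, and the GW invariant equals the positive enumerative count. Finally, deformation invariance of genus-zero GW invariants promotes this conclusion to the symplectic category, establishing that $A$ is strongly uniruled with both additional insertions equal to powers of the K\"ahler form $H$; in particular $X$ is symplectically uniruled.
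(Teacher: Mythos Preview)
Your overall strategy is sound and your use of minimality to exclude reducible contributions is exactly the mechanism that makes the argument work. However, your route is genuinely different from the paper's, and there is one step in your version that is not yet justified.

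The paper does not compute any three--point invariant directly. It starts from the Koll\'ar--Ruan input in a different form: for the minimal class $A$ there is a nonzero \emph{fixed--domain} invariant
\[
\langle [pt]\,\|\,[pt],\omega^{i_2},\dots,\omega^{i_k}\rangle^X_A,
\]
where the first $[pt]$ is the point class in $\overline{\mathcal M}_{0,k}$. One then applies the splitting axiom with respect to a basis of the form $\{1,[\omega],\dots,[\omega^n],e_{n+1},\dots\}$ orthogonal to the $\omega$--powers. Each summand is a product of three--point invariants in classes $A_1,\dots,A_r$ with $\sum A_j=A$; the factor carrying the $[pt]$ insertion forces a holomorphic curve through a very general point, so $\omega(A_1)=\omega(A)$ by minimality and all other $A_j$ vanish. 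The basis choice then collapses the whole expression to a single nonzero $\langle [pt],[\omega^p],[\omega^q]\rangle_A$. The point is that nonvanishing of \emph{some} three--point invariant is inherited formally from the $k$--point one; no transversality beyond what already went into Koll\'ar--Ruan is needed.

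In your argument the gap is the sentence ``ampleness of $H$ \dots\ forces this count to be a positive integer''. Freeness of the general $A$--curve gives that $ev_1$ is a submersion, hence the family through a general $p$ has the expected dimension $c_1(A)-2$; but you then need the further evaluations at $x_2,x_3$ to meet generic cycles $H^a$, $H^{c_1(A)-a}$ nontrivially. That is controlled by $H^1\bigl(f^\ast TX(-x_1-x_2-x_3)\bigr)$, which for a merely free (not very free) curve need not vanish, so $ev_1\times ev_2\times ev_3$ need not be a submersion and the count can be zero for a given $a$. (Indeed $a=0$ or $a=c_1(A)$ already fail by the fundamental class axiom.) You have not said which $a$ you take, nor why the resulting cut--down is nonempty. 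This can be repaired --- for instance by taking enough divisor insertions and invoking the divisor axiom, which is essentially how one lands back at the $k$--point count --- but as written the positivity is asserted, not proved. The paper's splitting--axiom reduction sidesteps this entirely: it never needs to know \emph{which} $(p,q)$ works, only that one of them must.
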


\begin{proof}
Let $A$ be the class of a $\mathbb P^1$ of minimal energy through a
very general point $x_0\in X$.

Then    (cf. Theorem 4.2 in \cite{HLR})  for some $k$ there is a
nonzero invariant of the form
\begin{equation} \label{fixed marked points} <[pt] || [pt],\omega^{i_2},\dots,\omega^{i_k}>^X_A,
\end{equation}
where the first $[pt]$ represents the Poincar\'e dual of the point
class of $\overline{\mathcal M}_{0,k}$ and $\omega$ is a K\"ahler
form on $X$.

Choose a homogeneous basis of $H^2(X;\mathbb R)$
\begin{equation} \label{symplectic basis} \Upsilon=\{1, [\omega], \cdots, [\omega^n], e_{n+1},
\cdots\}, \end{equation}
 where $e_i\cdot e_j=0$ if $i\leq n$ and
$j>n$. This is possible as $[\omega^n]\ne 0$.

 Apply  the splitting axiom to the invariant (\ref{fixed marked
points}) with respect to the basis $\Upsilon$, (\ref{fixed marked
points}) is expressed as a sum of products of $3-$point invariants.
 in curve classes $A_1,\dots,A_r$.
One of the curve classes, say $A_1$, has a  $[pt]$ constraint. But
then $A_1$ must have a holomorphic representative through $x_0$.
Hence $\omega(A_1) = \omega(A)$ and all $A_j, j>1$ are zero.

By our choice of $\Upsilon$ it is easy to see that  the $k-$point
invariant (\ref{fixed marked points}) collapses to
 a nonzero
invariant of the form
\begin{equation}\label{pq}
I_{p,q} := <[pt],[\omega^p], [\omega^q>_A^D
\end{equation}
where $ p=\sum_{j\in J_1} i_j,q=\sum_{j\in J_2} i_j$  for some
partition $J_1,J_2$ of $\{2,\dots,k\}$.

\end{proof}

This sharper version due to McDuff is particularly powerful in light
of Theorem \ref{main1}, leading immediately to Corollary
\ref{homologically injective}.

In dimension 4 it follows from \cite{Mc1}, \cite{LL1}, \cite{LL2},
\cite{LM} that the converse of Theorem \ref{projective} is
essentially true.

Fano manifolds are (projectively) uniruled. The analogue of a Fano
     manifold in the symplectic category is a monotone symplectic
     manifold where $C_1=\lambda \omega$ for $\lambda>0$. It would
     be a challenging problem to show that  any monotone symplectic
     manifold is indeed uniruled.

Another important class of examples is provided by the following
beautiful result in \cite{Mc3}.

\begin{theorem} Hamiltonian $S^1-$manifolds are uniruled.
\end{theorem}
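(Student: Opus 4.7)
My plan is to invoke Seidel's representation $\mathcal{S}\colon \pi_1(\mathrm{Ham}(X,\omega)) \to (QH^{*}(X))^{\times}$ into the group of units of the small quantum cohomology. A nontrivial Hamiltonian $S^{1}$-action on $X$ produces a loop $\sigma$ in $\mathrm{Ham}(X,\omega)$, so the Seidel element $\mathcal{S}(\sigma)$ is automatically a unit. Recall that $\mathcal{S}(\sigma)$ is assembled from weighted counts of pseudo-holomorphic sections of the Hamiltonian fibration $E_\sigma \to S^{2}$ obtained by clutching two copies of $X\times D^{2}$ via $\sigma$.

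My second step would be to compute the top-order term of $\mathcal{S}(\sigma)$ in the filtration by symplectic area of section classes. Fixing an $S^{1}$-invariant compatible almost complex structure on $E_\sigma$ and working near the fixed component $F_{\max}$ on which the moment map attains its maximum, an equivariant localization / normal-bundle computation shows that the sections of smallest area are the constant sections over $F_{\max}$ in the horizontal direction, and that their moduli space is regular because the $S^{1}$-weights on the normal bundle of $F_{\max}$ are all strictly negative. The resulting top coefficient of $\mathcal{S}(\sigma)$ is the class $[F_{\max}]$ paired with a Novikov monomial $q^{-\kappa}$ for some $\kappa>0$.

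To conclude uniruledness I would exploit the fact that when the action is nontrivial, $F_{\max}$ is a proper submanifold, so $[F_{\max}]\neq [X]$ in classical cohomology. Writing $\mathcal{S}(\sigma)\ast\mathcal{S}(\sigma)^{-1}=[X]$ and extracting the coefficient of $[X]$ in the relevant Novikov degree produces an identity $[F_{\max}]\ast b = [X] + (\text{strictly lower order})$ for some class $b$ appearing in $\mathcal{S}(\sigma)^{-1}$. Since the classical intersection $[F_{\max}]\cdot b$ has proper support and cannot contribute the full class $[X]$, this identity forces a nonzero three-point genus zero invariant $\langle [F_{\max}], b, [pt]\rangle^{X}_{A}$ with $0\neq A\in H_{2}(X;\mathbb{Z})$. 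By definition this exhibits $A$ as a strongly uniruled class, so $(X,\omega)$ is uniruled.

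The main obstacle will be the equivariant localization step identifying the top Seidel coefficient as $[F_{\max}]$. One needs both transversality of the minimal-area section moduli (handled via the Kuranishi/virtual framework in the non-semipositive range) and a check that no cancellation occurs between the leading term and the lower-order contributions of $\mathcal{S}(\sigma)$ after pairing with $\mathcal{S}(\sigma)^{-1}$. Once this leading-term identification is secure, the inversion argument above is purely formal manipulation in the Novikov-graded quantum cohomology.
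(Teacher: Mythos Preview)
The paper does not give its own proof of this theorem; it is quoted as a result of McDuff \cite{Mc3} and no argument is supplied. Your proposal is in fact a sketch of McDuff's original proof from that reference, so you have reproduced the intended argument, albeit one external to the present paper. The paper itself alludes to this approach in Remark~2.4 (``a beautiful characterization of uniruledness in terms of units of the quantum cohomology ring'').

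Two comments on your sketch. First, the leading-term identification of $\mathcal{S}(\sigma)$ with $[F_{\max}]\, q^{-\kappa}$ is the McDuff--Tolman computation and is correct; the regularity via strictly negative normal weights at $F_{\max}$ is exactly the mechanism. Second, your extraction in the last step is morally right but a bit loose. The clean way to finish, and what McDuff actually does, is to prove the general lemma that $X$ is (evenly) uniruled if and only if $QH^*(X)$ contains a unit which is not of the form $\lambda\cdot 1$ for $\lambda$ in the Novikov ring, equivalently a unit with a nonzero component in $H^{>0}(X)$. Granting this, the Seidel element $\mathcal{S}(\sigma)$ with leading coefficient $[F_{\max}]\in H^{>0}(X)$ immediately gives the conclusion, and one avoids chasing the interaction of $\mathcal{S}(\sigma)$ with $\mathcal{S}(\sigma)^{-1}$ term by term. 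Your direct extraction does work, but the sentence ``$[F_{\max}]\cdot b$ has proper support and cannot contribute the full class $[X]$'' is precisely where the content lies: one needs that $[F_{\max}]\in H^{>0}$, so its classical cup product with any class lands in $H^{>0}$ and therefore cannot equal $1\in H^0$; hence the equation $[F_{\max}]*b = 1 + (\text{lower})$ forces a genuine quantum correction, i.e.\ a nonzero $\langle [F_{\max}], b, [pt]\rangle^X_A$ with $A\neq 0$.
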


\begin{remark}\label{even}
It would be interesting to see whether there are uniruled
 manifolds such that every  uniruled  invariant must have  odd degree insertions. Interestingly, such
 a manifold can not be projective  by Theorem \ref{projective}. We
 are not aware of such manifolds.
 It seems that Hamiltonian $S^1-$manifolds are a good case to
 investigate.
\end{remark}

    A rich source of uniruled manifolds comes from
    uniruled  fibrations. Suppose that $\pi: X\rightarrow B$ is a fibration
    (with possibly singular fibers) where $X$ and $B$ are
    symplectic manifolds. We call it {\em an almost complex
    fibration} if there are tamed $J, J'$ for $X, B$ such that
    $\pi$ is almost complex. For example, by the famous Thurston construction,
    if a symplectic fiber
    bundle over a symplectic manifold has fiber $(F, \sigma)$ and $[\sigma]$ is
    a restriction class, then the total space $X$ has a symplectic form $\Omega$ that restricts to
    $\sigma$ on the  fibers and hence   is almost complex.
    Lefschetz fibrations, or more generally, locally holomorphic fibrations studied in \cite{Go2}
    are also almost complex.

    Let $\iota: \pi^{-1}(b)\rightarrow X$ be the embedding for a
    generic fiber over $b\in B$. We have the following

\begin{prop}\label{fiber}
    Suppose that $\pi: X\rightarrow B$ is an almost complex fibration between symplectic
    manifolds $X, B$. Then, for $A\in H_2(\pi^{-1}(b);\mathbb Z)$ and
    $\alpha_2,...,\alpha_k\in H^*(X;\mathbb R)$,
    \begin{equation}\label{=}
    <[pt], \iota^*\alpha_2, \cdots,
    \iota^*\alpha_k>^{\pi^{-1}(b)}_A=<[pt], \alpha_2, \cdots,
    \alpha_k>^X_A.
    \end{equation}

    \end{prop}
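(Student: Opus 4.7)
The plan is to use the pseudo-holomorphic projection $\pi$ to localize the total-space GW integral onto a single fiber. First, choose tame almost complex structures $J$ on $X$ and $J'$ on $B$ so that $\pi$ is $(J,J')$-holomorphic (granted by the almost complex fibration hypothesis). Choose a generic point $p_0\in X$ to serve as the common $[pt]$-insertion on both sides of \eqref{=}; by genericity, $b:=\pi(p_0)$ is a regular value and $\pi^{-1}(b)$ is the smooth generic fiber.

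The key observation is that every $J$-holomorphic genus-zero stable map $u:\Sigma\to X$ representing $\iota_*A$ is contained in a single fiber of $\pi$. Indeed, $\pi\circ u$ is a $J'$-holomorphic sphere in $B$ representing $\pi_*\iota_*A=0$, and any pseudo-holomorphic sphere of zero $\omega_B$-area must be constant. Forcing $u$ to pass through $p_0$ then pins this fiber to $\pi^{-1}(b)$. Consequently there is a natural set-theoretic identification of the constrained moduli space $\overline{\mathcal{M}}_{0,k}(X,\iota_*A;J)\cap(ev_1)^{-1}(p_0)$ with $\overline{\mathcal{M}}_{0,k}(\pi^{-1}(b),A;J|_{\pi^{-1}(b)})\cap(ev_1)^{-1}(p_0)$, and for each $i\ge 2$ the evaluation factors as $ev_i^X=\iota\circ ev_i^{\pi^{-1}(b)}$, so $(ev_i^X)^*\alpha_i=(ev_i^{\pi^{-1}(b)})^*\iota^*\alpha_i$. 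Hence \eqref{=} reduces to matching virtual fundamental classes after the $[pt]$-constraint.

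For this last step, note that the normal bundle $N$ of $\pi^{-1}(b)$ in $X$ is the pullback of $T_bB$, hence trivial of complex rank $\dim_{\mathbb{C}}B$. For any such $u$, the bundle $u^*N$ is holomorphically trivial on each $\mathbb{P}^1$ component, so the exact sequence $0\to u^*T_{\pi^{-1}(b)}\to u^*T_X\to u^*N\to 0$ yields $H^1(u^*T_X)=H^1(u^*T_{\pi^{-1}(b)})$ together with $H^0(u^*T_X)=H^0(u^*T_{\pi^{-1}(b)})\oplus T_bB$. Thus the obstruction bundles agree, and the $X$-deformation theory differs from the fiber deformation theory only by a trivial rank-$\dim_{\mathbb{C}}B$ summand corresponding to sliding the fiber in $B$. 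The $X$-point constraint has exactly $2\dim_{\mathbb{C}}B$ more codimension than the fiber point constraint, precisely consuming this extra trivial factor, so the two virtual integrals coincide, yielding \eqref{=}.

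The main obstacle is formalizing this virtual-class identification when $J$ is non-generic (as it is forced to be by the almost complex fibration structure). One handles it either by invoking the product/fiber-bundle compatibility of the virtual fundamental class in a Kuranishi or polyfold framework, or, more directly, by exhibiting the excess obstruction bundle over $\overline{\mathcal{M}}_{0,k}(\pi^{-1}(b),A)$ as the trivial rank-$\dim_{\mathbb{C}}B$ bundle $\underline{T_bB}$, whose Euler class is exactly represented by the additional $2\dim_{\mathbb{C}}B$-codimension of the $X$-point constraint over the fiber one. Potential contributions from reducible stable maps spanning multiple fibers are excluded by the same constant-projection argument as above.
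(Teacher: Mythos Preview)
Your proof is correct and follows essentially the same approach as the paper: use the $(J,J')$-holomorphicity of $\pi$ to force any genus-zero $A$-curve through the point into the fiber, identify the constrained moduli spaces, and then compare virtual classes via the splitting $f^*TX = f^*T\pi^{-1}(b)\oplus \underline{\C}^l$ together with $H^1(C,\underline{\C}^l)=0$ in genus zero. The paper's argument is terser (it simply asserts the equality of virtual fundamental cycles of the point-constrained moduli spaces from the splitting and the $H^1$-vanishing), whereas you spell out the dimension bookkeeping and flag the formalization issue with the non-generic $J$, but the substance is the same.
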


    \begin{proof}
    First of all, (\ref{=}) makes sense as both invariants are well-defined at the same
    time.
       Choose $J, J'$ such that $\pi$ is almost complex. Suppose
       that $f: C\rightarrow X$ is a genus 0 stable map with homology
       class $A$. Then, $\pi\circ f$ is holomorphic with zero
       homology class. Therefore, $im (\pi \circ f)$ is a point.
       Namely, $im (f)$ is contained in a fiber. Choose a point $pt \in
       \pi^{-1}(b)$. Then we have the identification of the moduli
       spaces of $k-$marked genus zero stable curves with the 1st
       marked point going to $pt$,
$$\overline{\M}^X_{0,k}(A, pt)=\overline{\M}^{\pi^{-1}(b)}_{0,k}(A, pt ).$$

Furthermore, they  have the same virtual fundamental cycles. As
$\pi$ is almost complex we have the splitting
$$f^*TX=f^*T\pi^{-1}(b)\oplus \underline \C^l,$$
where $l$ is the codimension of a fiber and $\underline \C^l$ is the
trivial complex bundle of dimension $l$. As $C$ has genus zero, we
have
$$H^1(C, \underline \C^l)=0.$$
 It implies that
$$[\overline{\M}^X_{0,k}(A, pt)]^{vir}=[\overline{\M}^{
\pi^{-1}(b)}_{0,k}(A, pt)]^{vir}.$$
    By integrating $\alpha_2, \cdots, \alpha_k$ against the
    virtual fundamental cycles, we obtain (\ref{=}).

\end{proof}

Consequently,  we have

\begin{cor}\label{fibration}
Suppose that $\pi: X\rightarrow B$ is an almost complex fibration
between symplectic
    manifolds $X, B$. If a smooth fiber is uniruled and
    homologically
    injective (over $\mathbb R$), then $X$ is uniruled.
\end{cor}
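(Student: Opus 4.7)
The plan is to reduce directly to Proposition~\ref{fiber}. First I would use the hypothesis that $F := \pi^{-1}(b)$ is uniruled to pick a nonzero class $A \in H_2(F;\Z)$ together with cohomology classes $\beta_2, \ldots, \beta_k \in H^*(F;\R)$ realizing a point-insertion GW invariant
\[
\langle [pt], \beta_2, \ldots, \beta_k \rangle^F_A \neq 0.
\]
The remaining task is to promote this to a uniruled invariant on the ambient space $X$.

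Next, I would unpack the homological injectivity hypothesis. Over $\R$, injectivity of $\iota_* : H_*(F;\R) \to H_*(X;\R)$ dualizes, in each degree and using finite dimensionality, to surjectivity of $\iota^* : H^*(X;\R) \to H^*(F;\R)$. So each $\beta_i$ admits a lift $\alpha_i \in H^*(X;\R)$ with $\iota^* \alpha_i = \beta_i$. Feeding these ambient classes into Proposition~\ref{fiber} immediately yields
\[
\langle [pt], \alpha_2, \ldots, \alpha_k \rangle^X_A \;=\; \langle [pt], \iota^*\alpha_2, \ldots, \iota^*\alpha_k \rangle^F_A \;\neq\; 0.
\]

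The final step is to check that $\iota_* A$ is nonzero in $H_2(X;\Z)$, so that the displayed ambient invariant genuinely witnesses uniruledness of $X$. Since $A$ is a uniruled class on $F$ it has positive symplectic area with respect to the restriction of any taming form on $X$, hence $A \neq 0$ in $H_2(F;\R)$; homological injectivity then forces $\iota_* A \neq 0$ in $H_2(X;\R)$, and a fortiori in $H_2(X;\Z)$. I do not expect any serious obstacle: the analytic content — the identification of moduli spaces and the virtual class comparison via the splitting $f^*TX \cong f^*TF \oplus \underline{\C}^l$ — has already been discharged in Proposition~\ref{fiber}, so the corollary is essentially the bookkeeping needed to convert homological injectivity into the existence of ambient lifts of the fiber insertions.
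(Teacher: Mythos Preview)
Your proposal is correct and follows precisely the approach the paper intends: the paper states the corollary with the single word ``Consequently'' after Proposition~\ref{fiber}, and your argument---dualizing homological injectivity to surjectivity of $\iota^*$, lifting the fiber insertions $\beta_i$ to ambient classes $\alpha_i$, and then invoking Proposition~\ref{fiber}---is exactly the bookkeeping that word is meant to summarize. Your final check that $\iota_*A\neq 0$ via positivity of the symplectic area is also the right way to close the argument.
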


The homologically injective assumption could be a strong one.
 Notice that for a fiber bundle,  the Leray-Hirsch
theorem asserts that, under the  homologically injective assumption,
the homology group of the total space is actually isomorphic to the
product of the homology group of the fiber and the base. However,
Corollary \ref{fibration} can still be applied for all  product
bundles, and
 all projective space fibrations (more generally, if the rational
cohomology ring of a smooth uniruled fiber is generated by
   the restriction of $[\omega]$).

Moreover, we were informed by McDuff that a Hamiltonian bundle is
homologically injective (or equivalently, cohomologically split) if
(cf. \cite{LM2})

a)  the base is $S^2$ (Lalonde-McDuff-Polterovich), and more
generally, a complex blow up of a product of projective spaces,

b) the fiber satisfies the hard Lefschetz condition (Blanchard), or
its real cohomology is generated by $H^2$.

 Here is another variation. As in the case of a projective space,
for a uniruled manifold up to dimension $4$,  insertions of a
uniruled class can all be assumed to be of the form $[\omega]^i$,
thus we also have

\begin{cor} If the general fibers of a possibly singular uniruled
fibration are $2$-dimensional or $4-$dimensional, then the total
space is uniruled.
\end{cor}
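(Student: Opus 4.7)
The plan is to bootstrap Proposition \ref{fiber} using the property -- asserted in the sentence immediately preceding the corollary -- that a uniruled symplectic manifold of dimension at most $4$ admits a nonzero uniruled GW invariant whose additional insertions are all powers of its symplectic class. This dispenses with the homological injectivity assumption of Corollary \ref{fibration}, because powers of the fiber symplectic class automatically arise by restriction from the ambient manifold, with no cohomological obstruction.

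First I would fix a smooth generic fiber $F = \pi^{-1}(b)$. Since $\pi$ is almost complex with respect to some $\omega$-tame $J$ on $X$ and some tame $J'$ on $B$, the fiber $F$ is a $J$-holomorphic, hence symplectic, submanifold of $(X,\omega)$; in particular $\omega_F := \iota^*\omega$ is a symplectic form on $F$ satisfying $[\omega_F]^i = \iota^*[\omega]^i$ in $H^{2i}(F;\mathbb{R})$ for all $i \geq 0$.

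Because $F$ is uniruled of dimension $2$ or $4$, the cited results \cite{Mc1}, \cite{LL1}, \cite{LL2}, \cite{LM} yield a nonzero invariant
\begin{equation*}
\langle [pt], [\omega_F]^{i_2}, \ldots, [\omega_F]^{i_k}\rangle^F_A \neq 0
\end{equation*}
for some nonzero $A \in H_2(F;\mathbb{Z})$. Substituting $[\omega_F]^{i_j} = \iota^*[\omega]^{i_j}$ and invoking Proposition \ref{fiber} then gives
\begin{equation*}
\langle [pt], [\omega]^{i_2}, \ldots, [\omega]^{i_k}\rangle^X_{\iota_*A} = \langle [pt], \iota^*[\omega]^{i_2}, \ldots, \iota^*[\omega]^{i_k}\rangle^F_A \neq 0,
\end{equation*}
exhibiting $\iota_*A$ as a uniruled class of $X$.

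The essential input, and the only step that could obstruct the argument, is the $\dim F \leq 4$ lemma that a uniruled invariant can be taken with $[\omega_F]^i$ insertions. In dimension $2$ this reduces to $\langle [pt], [pt], [pt]\rangle^{S^2}_{[S^2]} = 1$ together with the fact that $H^2(S^2;\mathbb{R})$ is spanned by $[\omega_F]$; in dimension $4$ it follows from the classification of uniruled symplectic $4$-manifolds as rational or ruled, together with known calculations of the relevant fiber-class invariants. Possible singular fibers of $\pi$ play no role, since only a generic (smooth) fiber enters the calculation.
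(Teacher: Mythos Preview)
Your proposal is correct and follows exactly the approach indicated in the paper: the corollary is deduced from Proposition \ref{fiber} together with the observation (stated in the sentence immediately preceding the corollary) that a uniruled symplectic manifold of dimension at most $4$ has a nonzero uniruled invariant whose non-point insertions are powers of $[\omega]$, hence automatically restriction classes. The paper gives no further argument beyond that sentence, and your expansion of it is accurate.
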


This in particular applies to a $2-$dimensional  symplectic conic
bundle. A symplectic conic bundle  is a  conic hypersurface bundle
in a smooth $\mathbb P^k$ bundle. Holomorphic conic bundles are
especially important in the theory of $3-$folds. It is conjectured
that a projective uniruled $3-$fold is either birational to a
trivial $\mathbb P^1-$bundle or a conic bundle.

\subsection{Some motivating examples}

    In this subsection, we present examples of uniruled manifolds from the divisor-to-ambient construction.
 These
    examples motivate Theorem \ref{main1}  and generalize
     some  early results of McDuff
     in a slightly different context.
The common feature is that the geometric situation is simple
    enough that a direct geometric argument can be applied.

    We start
    from the simplest situation of trivial normal bundles.
    Let $\iota:D\rightarrow X$ be a symplectic divisor.
    McDuff treated the case that $D$ is a standard projective space and $X$ is semi-positive, see
    Theorem \ref{McDuff}. In general we have,

    \begin{theorem}\label{trivial normal bundle}
Suppose the normal bundle  $N_D:=N_{D|X}$ is trivial. If there is a
nonzero invariant $\langle [pt], \iota^*\alpha_2, ...,
\iota^*\alpha_k\rangle_A^D$,
  then $X$ is uniruled and in fact,
  $$\langle [pt], \alpha_2, \cdots, \alpha_k\rangle_A^X=\langle [pt], \iota^*\alpha_2, ...,
  \iota^*\alpha_k\rangle_A^D$$
\end{theorem}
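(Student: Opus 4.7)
The plan is to closely follow the strategy of Proposition \ref{fiber}, replacing the global almost complex fibration by the local product structure near $D$ provided by the trivial normal bundle. First I would choose a tamed almost complex structure $J$ on $X$ which makes $D$ a $J$-holomorphic submanifold and which, on a tubular neighborhood $U \cong D \times \mathbb{D}^2$, agrees with the product of a chosen $J_D$ on $D$ and the standard complex structure on the disk; such a $J$ exists because $N_D$ is trivial as a complex line bundle.

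The geometric heart of the argument is to show that every stable $J$-holomorphic map $f : C \to X$ of class $\iota_* A$ whose first marked point is constrained to a point $pt \in D$ must factor through $\iota$. Since $D$ is $J$-holomorphic and $f$ is $J$-holomorphic, positivity of intersections for pseudoholomorphic curves meeting a pseudoholomorphic divisor of complex codimension one (valid in all dimensions) says that for every irreducible component $C_i$ whose image is not contained in $D$, each intersection of $f(C_i)$ with $D$ contributes a strictly positive local multiplicity. The total homological intersection is
\[
\iota_* A \cdot [D] \;=\; c_1(N_D)(A) \;=\; 0,
\]
because $N_D$ is trivial. Components mapping into $D$ contribute $0$ to the total. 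The component carrying the constrained marked point meets $D$ at that point, so by positivity it must already lie in $D$. Any further component not contained in $D$ is joined to an inside component through a chain of nodes, hence must meet $D$ at some node image, contradicting the vanishing total intersection. Therefore the entire stable map lies in $D$, yielding the identification
\[
\overline{\M}^X_{0,k}(\iota_* A, pt) \;=\; \overline{\M}^D_{0,k}(A, pt).
\]

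To upgrade this to the claimed equality of GW invariants, I would argue exactly as in Proposition \ref{fiber} that the two virtual fundamental cycles coincide: the obstruction-theoretic difference is controlled by $H^1(C, f^* N_D)$, which vanishes since $N_D$ is trivial and $C$ has arithmetic genus zero. Integrating $\alpha_2,\dots,\alpha_k$ (respectively $\iota^*\alpha_2,\dots,\iota^*\alpha_k$) against the common virtual cycle then yields the asserted identity, which in particular forces $X$ to be uniruled, since by hypothesis the right-hand side is nonzero.

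The main obstacle is the confinement step: in dimension greater than four one must invoke (rather than reprove from scratch) the higher-dimensional version of positivity of local intersections between a $J$-holomorphic curve and a $J$-holomorphic hypersurface, and then carefully handle reducible genus-zero stable maps whose nodes may land on $D$. Once this ingredient is in place, the moduli and virtual-class comparisons parallel the fibration case handled in Proposition \ref{fiber}, and the theorem follows.
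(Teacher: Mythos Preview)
Your proposal is correct and follows essentially the same approach as the paper: choose a $J$ making $D$ a holomorphic divisor with an almost complex product neighborhood, use positivity of intersections together with $D\cdot A = c_1(N_D)(A)=0$ and the point constraint in $D$ to confine the entire genus-zero stable map to $D$, and then compare virtual cycles via $f^*TX = f^*TD \oplus \underline{\C}$ and $H^1(C,\underline{\C})=0$. If anything, your treatment of the confinement step for reducible stable maps (propagating through the tree of components via nodes) is spelled out more carefully than in the paper; the only small omission is the preliminary dimension check that $\langle [pt],\alpha_2,\dots,\alpha_k\rangle_A^X$ is actually defined, which follows immediately from $\mathrm{vir\,dim}\,\overline{\M}^X_{0,k}(A) = \mathrm{vir\,dim}\,\overline{\M}^D_{0,k}(A)+2$ and $\deg_X[pt]=\deg_D[pt]+2$.
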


\begin{proof} The argument is parallel to that of Proposition
\ref{fiber}.
   First
notice that the triviality of the normal bundle implies that the
    $$vir\dim \overline{\M}^X_{0,k}(A)=vir\dim \overline{\M}^D_{0,k}(A)+2.$$
On the other hand, $\deg_X([pt])=\deg_D([pt])+2$. Hence $\langle
[pt], \tilde \alpha_2,..., \tilde \alpha_k\rangle _A^X$ is also
defined.

We choose an $\omega-$compatible almost structure $j$ on $D$ and
extend it to    an $\omega-$compatible almost complex structure on
$X$. Furthermore, we choose $J$ in such a fashion that a
 $D$ has an almost complex product neighborhood.

Choose a point $pt\in D$. Let $\overline{\M}^X_{0,k}(A, pt)$ and
$\overline{\M}^D_{0,k}(A, pt)$ be the moduli spaces of genus zero
stable maps of homology class $A$ such that $e(x_1)=pt$. Suppose
that $f: C\rightarrow X$ is a genus zero stable map in
$\overline{\M}^X_{0,k}(A, pt)$. It is well-known that any component
of $im(f)$ either lies in $D$ or intersects $D$ positively. One the
other hand, $D\cdot A=0$ by the assumption and $f(x_1)\in D$.
Therefore, $im(f)$ lies completely inside  $D$. Namely,
$\overline{\M}^X_{0,k}(A, pt)=\overline{\M}^D_{0,k}(A, pt)$.

Furthermore, $f^*TX=f^*TD\oplus \underline\C$ and $H^1(C, \underline
\C)=0$  imply that $$[\overline{\M}^X_{0,k}(A,
pt)]^{vir}=[\overline{\M}^D_{0,k}(A, pt)]^{vir}.$$
    By integrating $\alpha_2, \cdots, \alpha_k$ against the
    virtual fundamental cycles, we obtain
  $$\langle [pt], \alpha_2, ..., \alpha_k\rangle _A^X=\langle [pt], \iota^*\alpha_2,..., \iota^*\alpha_k\rangle_A^D.$$
Therefore $X$ is uniruled.

\end{proof}

     When the normal bundle is not trivial, the situation is more
     complicated. But the identification between appropriate GW-invariants of
     the divisor and  the ambient manifold still remains to be
     valid in some cases. The following is a particular important example in \cite{Mc2}
     established in the early 90s.

     \begin{theorem}  \label{McDuff} Let $(X, \omega)$ be a
     semi-positive symplectic $2n-$manifold which contains a submanifold $P$
     symplectomorphic to $\mathbb P^{n-1}$ whose normal
     Chern number $m$ is non-negative. Then certain blow-up of $X$ is
     uniruled,
     and if $0\leq m \leq 2$ or $n=2$, $X$ itself is uniruled.

     \end{theorem}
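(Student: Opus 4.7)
The plan is to deduce this directly from Corollary \ref{homologically injective}(b), which gives a strictly stronger statement with no semi-positivity assumption and no restriction on $m$ or $n$. First observe that $P \cong \mathbb P^{n-1}$ is projectively uniruled in the obvious sense: the hyperplane line class $L$ satisfies the classical identity $\langle [pt], [pt], H^{n-2}\rangle^P_{0,L} = 1$. By Theorem \ref{projective}, $L$ is a minimal uniruled class for the Fubini--Study form on $P$, and it is strongly uniruled with both additional insertions realized as powers of $[\omega|_P]$. Since $\omega|_P = \iota^*\omega$, each of these insertions is the pullback of a class on $X$, which is exactly the form of insertion permitted by the hypothesis (\ref{k}) of Theorem \ref{main1}.

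Next, the hypothesis that $P$ has non-negative normal Chern number $m$ is precisely the condition $N_P(L) = m \geq 0$. Applying Corollary \ref{homologically injective}(b) with $D = P$ then yields that $X$ is symplectically uniruled, with no restriction on $m$ or $n$ and no blow-up required. Thus my plan produces a sharper conclusion than the stated theorem. The extra restrictions in McDuff's original formulation --- requiring $m \leq 2$, or $n = 2$, or the passage to a blow-up --- reflect the limitations of the semi-positive Gromov--Witten framework available when \cite{Mc2} was written: in that setting one lacked the virtual machinery to track stable map components that bubble off into $X \setminus P$, and the blow-up trick eliminated the degeneration combinatorially at the cost of weakening the conclusion.

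Consequently the main obstacle is entirely deferred to Theorem \ref{main1} itself, whose proof occupies the bulk of this paper. As described in the introduction, the technical heart there is extending the Maulik--Okounkov--Pandharipande relative/absolute correspondence to include the super-admissible invariants and the divisor invariants that enter nontrivially; no independent geometric argument is required for McDuff's theorem as a consequence of Corollary \ref{homologically injective}.
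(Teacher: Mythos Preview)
Your proposal is correct but inverts the paper's narrative. In the paper, Theorem~\ref{McDuff} is \emph{not} proved: it is cited as McDuff's result from \cite{Mc2} and presented purely as motivation for the main theorem. The paper then sketches McDuff's original direct geometric argument in the illustrative case $D=\mathbb P^2$, treating $m=0,1,2,3$ separately by counting $J$-holomorphic lines against point and line constraints in the semi-positive setting, precisely in order to exhibit the combinatorial pattern (see equation~(\ref{combination}) for $m=3$) that the relative-divisor/absolute correspondence will later systematize. Your argument reverses the logical direction, deducing McDuff's statement from Corollary~\ref{homologically injective}; this is valid and non-circular, since the proof of Theorem~\ref{main1} in \S3--5 nowhere invokes McDuff's result, and indeed the paper itself remarks in \S6 that $D=\mathbb P^{n-1}$ is the particular case of its Fano-hypersurface corollary. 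What your route buys is uniformity in $m,n$ and the removal of the semi-positivity hypothesis; what the paper's exposition buys is a hands-on geometric picture that motivates \emph{why} the super-admissible correspondence should exist---the $m=3$ case is the prototype of the phenomenon in which the divisor invariant equals a combination of ambient invariants rather than a single one.
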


     As a consequence of Theorem \ref{blowup},
$X$ itself is
     still uniruled even if $n>1$ and $m\geq 3$.

The case of $D=\mathbb P^1$ was first proved in \cite{Mc1},
generalizing a result of \cite{Gr}. Moreover, in that case, $X$ is
shown to be the connected sum  of either $\mathbb P^2$ or an
$S^2-$bundle over a Riemann surface with a number of
$\overline{\mathbb P}^2$. This is a fundamental result  in
symplectic $4-$manifold theory.

It is instructive to examine her argument in the case of $D=\mathbb
P^2$ geometrically.

\begin{remark} Notice that here we are in the semi-positive territory, thus we can
directly compute invariants via cutting down a generic moduli space
by generic cycles. We will not mention this explicitly.
\end{remark}

 Let us first consider the case of trivial normal bundle.
 We pick a point $x$ in $D$ and a surface  $F$ intersecting
$D$ at 1 point $y \ne x$. Let $l$ be the line class. By the
previous proposition,
$$<[x], F>^X_l=<[x], [y]>^D_l=1.$$

In the case of normal bundle $\mathcal O(1)$, we choose two points
$x$ and $y$ in $M$. Any $A-$curve outside $D$ can intersect $D$ only
at one point, and there is a unique line in $D$ through $x$ and $y$.
Namely,
$$<[x], [y]>^X_l=<[x],[y]>^D_l=1.$$

In the case of normal bundle $\mathcal O(2)$, we choose two points
$x$ and $y$ and a line $L$ in $D$ away from $x$ and $y$. Namely,
$$<[x], [y], [L]>^X_l=<[x], [y], [L]>^D_l=1.$$

In each of above cases, we show that the GW-invariant of the ambient
manifold $X$ is equal to the corresponding invariant of $D$.

The next case of normal bundle $\mathcal O(3)$ is different. The
simple relation between Gromov-Witten invariant is no longer true.
McDuff's strategy is to blow up a line of $\mathbb P^2$ in $X$ to
reduce it to the previous situation.  Here, we give a different
argument which motivates the correspondence in the next section.

Now, for the invariant of $X$ to be well defined we need two points
and two lines, or 3 points, or one point and four lines. We choose
two points $x$ and $y$. We also pick two lines $L_1$ and $L_2$ in
$D$. Let $z$ be the intersection point of $L_1$ and $L_2$. We assume
that $z$ is not in the unique line through $x$ and $y$.
 We claim that

 \begin{equation} \label{combination}1=<[x], [y]>^D_l=<[x], [y], [L_1], [L_2]>^X_l-<[x],[y],[z]>^X_l.
 \end{equation}

  The point is that any curve through $x,y,z$ also
intersects $(x,y, L_1, L_2)$. Let $C$ be  a curve intersecting $(x,
y, L_1, L_2)$. If $C$ is not inside $D$, then $C$ has to intersect
$z$ because $C$ has at most three intersection points with $D$. If
$C$ is inside $D$, then $C$ must be the unique line through $x$ and
$y$.

We remark that the above proof  is just a sketch. To do this
calculation we realize the constraints $x, y. L_1, L_2$ in a very
non-generic way, and for a complete proof we would have to prove
that this is justified.

It follows  that  one of the invariants on the right hand side of
equation (\ref{combination})is not zero and hence $X$ is uniruled.
We want to emphasis that in this case the invariant of $D$ can not
be identified with a single invariant of $X$.

    When $\mathcal O(m)$ increases, we can similarly express
    the relevant  invariant of $D$ as a more and more complicated combination
    of invariants of the ambient space $X$. Our main idea is that such
    a process is best cast into the language of the
    relative-divisor/absolute correspondence established in
    the following 3 sections.

\section{Degeneration formula and correspondence}

      A powerful tool in GW theory is the degeneration formula.
      To explore its power systematically, a very useful "relative/absolute correspondence"
      was constructed by
     Maulik-Okounkov-Pandharipande. It has been generalized to the
     situation of blow-up by the authors and Hu to relate absolute
     invariants of a manifold and relative invariants of the blow-up
     manifold. Such an extended  relative/absolute correspondence is crucial
     to prove the birational invariance of uniruledness.

     However, only a subset set of colored absolute invariants appears in the
     relative/absolute correspondence.  They are admissible in the sense that the multiplicity
     of relative insertions is exactly $D\cdot A$, where $D$ is
     the divisor and $A$ is the curve class.
     It is natural to consider non-admissible absolute invariants.
     If the absolute invariant is sub-admissible in the sense that
     the multiplicity of relative insertions is less than $D\cdot A$,
     we can always use the divisor axiom to add more insertions to
     obtain an equivalent admissible invariants. The interest is
     on  super-admissible invariants where the multiplicity is
     bigger than $D\cdot A$.

     \subsection{Symplectic
cut and the degeneration formula}\label{cut}

\subsubsection{Symplectic cut} Let $(X,\omega)$ be a closed symplectic manifold. Let
$S$ be a hypersurface having a neighborhood with a free Hamiltonian
$S^1-$action. For instance, if there is a symplectic submanifold in
$X$, then hypersurfaces corresponding to sphere bundles of the
normal bundle have this property. Let $Z$ be the symplectic
reduction at the level $S$, then $Z$ is the $S^1-$quotient of $S$
and is a symplectic manifold of 2 dimension less.

We can cut $X$ along $S$ to obtain two closed symplectic manifolds
$(\overline X^+,\omega^+)$ and $(\overline X^-,\omega^-)$ each
containing a smooth copy of $Z$, and satisfying $\omega^+\mid_Z =
\omega^-\mid_Z$ (\cite{Le}).

 In particular, the pair $(\omega^+,
\omega^-)$ defines a cohomology class of
$\overline{X}^+\cup_Z\overline{X}^-$, denoted by
$[\omega^+\cup_Z\omega^-]$.
 Let $p$ be the continuous collapsing map
$$p:X\to \overline{X}^+\cup_Z\overline{X}^-.$$
It is easy to observe that
\begin{equation}\label{cohomology relation}
   p^* ([\omega^+\cup_Z\omega^-]) = [\omega].
\end{equation}

\subsubsection{Degeneration formula}\label{df}
 Given a symplectic cut, there is a  basic link between absolute invariants
 of $X$ and relative invariants
of $(\overline{X}^{\pm}, Z)$ in \cite{LR} (see also \cite{IP},
\cite{Li2}).  We now  describe such a formula.

 Let $B\in
H_2(X;{\mathbb Z})$ be in the kernel of
$$
  p_* : H_2(X;{\mathbb Z})
\longrightarrow H_2(\overline{X}^+\cup_Z\overline{X}^-; {\mathbb
Z}). $$
 By (\ref{cohomology relation}) we have $\omega(B) =0$.
Such a class is called a vanishing cycle.
 For $A\in H_2(X; {\mathbb Z})$ define $[A] = A + \mbox{Ker}
(p_*)$ and
\begin{equation}\label{vanishing cycle}
\langle\tau_{d_1}\alpha_1, \cdots,
\tau_{d_k}\alpha_k\rangle^X_{g,[A]} =
\sum_{B\in[A]}\langle\tau_{d_1}\alpha_1, \cdots,
\tau_{d_k}\alpha_k\rangle^X_{g,B}.
\end{equation}
Notice that  $\omega$ has constant pairing with any element in
$[A]$.
 It follows from  the Gromov
compactness theorem that there are only finitely many such elements
in $[A]$ represented by $J$-holomorphic stable maps. Therefore, the
summation in (\ref{vanishing cycle}) is finite.

At this stage  we need to assume that each cohomology class
$\alpha_i$ is of the form \begin{equation}\label{distribution}
\alpha_i = p^*(\alpha_i^+\cup_Z\alpha_i^-).
\end{equation}
 Here $\alpha_i^\pm \in H^*(\overline{X}^\pm; {\mathbb
R})$ are classes with  $\alpha_i^+\mid_Z = \alpha_i^-\mid_Z$ so that
 they give rise to a class $\alpha_i^+\cup_Z\alpha_i^-\in
H^*(\overline{X}^+\cup_Z\overline{X}^-; {\mathbb R})$.

The degeneration formula expresses $\langle\tau_{d_1}\alpha_1,
\cdots, \tau_{d_k}\alpha_k\rangle^X_{g,[A]} $ as a sum of products
of relative invariants of $(\overline{X}^+, Z)$ and
$(\overline{X}^-, Z)$, possibly with disconnected domains.  In each
product of relative invariants, what is relevant for us are the
following conditions:

$\bullet$ the union of two domains  along relative marked points is
a stable genus $g$ curve with $k$ marked points,

$\bullet$ the total curve class  is equal to $p_*(A)$,

$\bullet$ the relative insertions are dual to each other,

$\bullet$ if $\alpha_i^+$ appears for $i$ in a subset of
$\{1,\cdots, k\}$, then $\alpha_j^-$ appears for $j$ in the
complementary subset of $\{1,\cdots, k\}$.

\subsection{Sup-admissible graphs}

Let $\iota:D\to X$ be a smooth connected symplectic divisor. As
mentioned, we can cut along $D$, or precisely, a cut along a small
circle bundle $S$  over $D$ inside $X$.

In this case, as a smooth manifold, $\overline{X}^+ =X$, which we
will denote by $\tilde X$. Denote the symplectic reduction of $S$ in
$\tilde X$ still by $D$. Notice however, the symplectic structure is
different from the original divisor. And $\overline{X}^-= \mathbb
P(N_D\oplus \underline {\mathbb C})$, the projectivization of
$\mathbb P(N_D\oplus \underline {\mathbb C})$\footnote{Notice that
our convention here is opposite to that in \cite{HLR}}. We will
often denote it simply by $P_D$ or $P$. Notice that $\mathbb
P(N_D\oplus \underline {\mathbb C})$ has two natural sections,
$$  D_{0}=\mathbb P(0\oplus \underline \C), \quad D_{\infty}=\mathbb P(N_D\oplus 0).$$
 The symplectic reduction of $S$ in $P_D$ is the
section $D_{\infty}$.

In summary, in this case,
 $X$ degenerates into $(\tilde X, D)$ and
$(P_D, D_{\infty})$. We also denote $\omega^-$ by $\omega_P$.

We also observe that the section $D_{0}$ actually has the same
symplectic structure and same neighborhood as the original divisor.
We denote the inclusion of $D_{0}$ in $P_D$ still by $\iota$.

\begin{definition}\label{effective}
 A class $A\in H_2( X;\mathbb Z)$ is called effective for the symplectic cut
 along $D$ if either

 $A$ is represented by a
 pseudo-holomorphic stable map to $X$ for
 all $\omega-$tamed almost complex structures, or

 $A$ is represented by a
 pseudo-holomorphic stable map to $X^-$ for
 all $\omega_P-$tamed almost complex structures, or

  $A$ is in the image of $\iota_*$ and is represented by a
 pseudo-holomorphic stable map to $\tilde X$ for
 all $\omega^+-$tamed almost complex structures.

\end{definition}

Notice that the zero class $A=0$ is considered to be effective here
as a constant map is pseudo-holomorphic.

\begin{definition} \label{absolute graph}
       A  connected colored graph $\Gamma$ consists of one vertex decorated by
        $(g,A)$ with $g$ an integer, $A$ an  class in $H_2(X;\mathbb Z)$, and   two sets of colored tails, $X$-tails and $D$-tails.

        We  further weight
        each  $X$-tail by a class $\alpha_i\in H^*(X;\mathbb R)$, called an
        $X$-insertion. We also weight
         each $D$-tail by a pair
$(\mu_i, \beta_i)$, where $\mu_i$ is a non-negative integer, and
$\beta_i$ is a class in $H^*(D;\mathbb R)$ called a $D$-insertion.
        We call the  resulting graph  {\em a connected colored weighted
        graph} and denoted by
        $$\Gamma((\alpha_i)| ((\mu_i, \beta_i))).$$
        The collection of pairs, $\mu=((\mu_i, \beta_i))$, is called a
        weighted partition.

         There is also the distinguished graph, the empty
        graph $\Gamma(\emptyset|\emptyset)$.
 \end{definition}

 \begin{definition}
 The dimension of the empty graph is defined to be zero. For a
 nonempty graph $\Gamma((\alpha_i)| ((\mu_i, \beta_i)))$, its dimension
 is defined to be
\begin{equation}\label{dimension of graph}\begin{array}
  {ll}
 &\dim \Gamma((\alpha_i)| ((\mu_i, \beta_i)))\cr
 =&2[C_1(A)+(n-3)(1-g)+D\cdot A]\cr &+[\sum (2-2\mu_i-\deg(\mu_i)]\cr
&+\|\varpi\|_1\cr
 &+[\sum_{\deg(\alpha_i)\ne 1}(2-\deg(\alpha_i)],\cr
\end{array}
\end{equation}
where $\|\varpi\|_1$ is the number of degree 1 insertions in
$\varpi$.
\end{definition}

\begin{remark} We can also consider the
       disjoint union $\Gamma^{\bullet}$ of several such graphs  and use $A_{\Gamma^{\bullet}},
       g_{\Gamma^{\bullet}}$ to denote the total homology class and total arithmetic genus.
       Here the total arithmetic genus is $1+\sum (g_i-1)$.
\end{remark}

\begin{definition}A connected colored  weighted graph with
$$ \varpi=(\alpha_i) \quad \hbox{and} \quad  \mu=((\mu_i, \beta_i)),$$
written simply as $\Gamma(\varpi|\mu)$, is called
\begin{itemize}
\item admissible if $\sum_j \mu_j =
D\cdot A$,
\item strictly sup-admissible if
\begin{equation}\label {sup-ad}\sum_j \mu_j> D\cdot A
\quad \hbox{and} \quad A\in im[\iota_*:H_2(D;\mathbb Z)\to
H_2(X;\mathbb Z)],
\end{equation}
\item strictly sub-admissible if $\sum_j
\mu_j< D\cdot A$.
\end{itemize}
A possibly disconnected graph is  called sup-admissible if it is a
connected strictly sup-admissible graphs or the disjoint union of
one or more connected  admissible graphs.

\end{definition}

Notice that every strictly sub-admissible absolute invariant can be
made admissible by adding an appropriate  number of $D$-insertions.

These graphs will be used to describe the structure of the
components appearing in the decomposition formula; cf. equation
(\ref{lower2}). The strictly sup-admissible graphs are connected
because they correspond to curves that lie entirely in $P_D$. The
other sup-admissible graphs describe the part of the curve lying in
$\tilde X$ and hence need not be connected.

\begin{definition}\label{fix bases}
 Suppose $X$ is of dimension $2n\geq 4$. Let
$\Theta=\{\delta_i\}$ be a self dual basis of
$\oplus_{q=0}^{2n-2}H^q(D;{\mathbb R})$ with respect to the cup
product of $D$.

 Let $\Xi=\{\gamma_i\}$ be a basis of $\oplus_{0\leq p\leq 2n}H^{p}(X;{\mathbb R})$.
\end{definition}

We will fix $\Theta$ and $\Xi$ in the rest of this paper.

\begin{remark} Notice  that we do not
require any compatibility of $\Theta$ and $\Xi$.
\end{remark}


 \begin{definition}
An $\Theta-$standard
  weighted partition $\mu$ is a partition
weighted by classes of $D$ from $\Theta$, i.e.
$$
   \mu = \{ (\mu_1, \delta_{K_1}), \cdots, (\mu_{\ell(\mu)},
   \delta_{K_{\ell(\mu)}})\}.
$$
$\varpi=(\alpha_i)$ is called $\Xi-$standard if each $\alpha_i\in
\Xi$.
\end{definition}

Let $c(X, \omega, J)$ be the minimal symplectic area of a connected
non-constant $J-$holomorphic curve. $c(X, \omega, J)$ is positive
due to Gromov compactness.

Let $c(X, \omega)$ be the maximum of $c(X, \omega, J)$ over $J$.

\begin{definition}$\Gamma(\varpi| \mu)$ is
called standard if
\begin{itemize}
\item the class of each vertex is a nonzero effective class,
\item $g\geq -\frac{\omega(A)}{c(X,\omega)}+1$.
\item  $\varpi$ is
$\Xi-$standard,
\item
 $\mu$ is $\Theta-$standard,

\item its dimension is zero.
\end{itemize}
\end{definition}

\subsubsection{Ordering  the graphs}
 Let $I$ be the set of possibly disconnected  sup-admissible standard colored weighted
 graphs.
We will order $I$  following \cite{MP}. The partial order is
defined in terms of several preliminary partial orders.

\begin{definition}
The set of pairs $(m, \delta)$ where $m\in {\mathbb Z}_{>0}$ and
$\delta \in H^*(D;\mathbb R)$ is partially ordered by the following
size relation
\begin{equation}\label{size}
   (m, \delta) > (m', \delta')
\end{equation}
if $m>m'$ or if $m=m'$ and $\deg (\delta) > \deg (\delta')$.
\end{definition}

We may place the pairs of $\mu$ in decreasing order by size, i.e. by
(\ref{size}).

\begin{definition}
A {\it lexicographic } ordering on weighted partitions is then
defined as follows:
$$
   \mu \stackrel{l}{>} \mu'
$$
if, after placing $\mu$ and $\mu'$ in decreasing order by size,
the first pair for which $\mu$ and $\mu'$ differ in size is larger
for $\mu$.
\end{definition}

 Next we introduce a relevant partial order on the
effective curve classes  of $X$ (see Definition \ref{effective}).

\begin{definition}

 For effective classes $A$ and $ A'$ in $ H_2( X;{\mathbb Z})$, we say that
$A'<A$ if  $A-A'\in H_2(X;\mathbb Z)$ has positive pairing with the
symplectic form on $X$.
\end{definition}


 We partially order such weighted graphs
 in the following way.

 \begin{definition}\label{order} The empty graph is smaller than
 any other graph. For any two non-empty  admissible  graphs
 $
 \Gamma(\varpi'|\mu')$ and $  \Gamma(\varpi|\mu) $,
$$
 \Gamma(\varpi'|\mu')\quad
\stackrel{\circ}{<}\quad  \Gamma(\varpi|\mu)
$$
if one of the conditions below holds

(1) $A'<A$,

(2) equality in (1) and the arithmetic genus satisfies $g'<g$,

 (3) equality in (1-2) and $\|\varpi'\| < \|\varpi\|$,

(4) equality in (1-3) and $\deg (\mu') > \deg (\mu)$,

(5) equality in (1-4) and $\mu'\stackrel{l}{>} \mu$,

\noindent where  $\|\varpi\|$  denotes  the number of $X$-tails, and
 $\deg(\mu)$ is the sum of
$\deg(\mu_i)$.

If $ \Gamma(\varpi'|\mu')$ is  admissible and $  \Gamma(\varpi|\mu)$
 is connected and strictly sup-admissible,
$$
 \Gamma(\varpi'|\mu')\quad
\stackrel{\circ}{<}\quad  \Gamma(\varpi|\mu)
$$
if $A'\leq A$.

\end{definition}

The inequalities (3-5) are designed so that the dimension of the
moduli space satisfying the larger constraint/condition is larger.
This explains the seemingly strange conditions (4) and (5)
 where the inequalities are reversed.

\begin{remark}\label{multiplicationorder}
 It is easy  to observe
that this extended partial order $\stackrel{\circ}{<}$ is preserved
under disjoint union of admissible graphs. Notice that we don't
compare strictly sup-admissible graphs.
\end{remark}

\begin{remark}If we are only interested in genus zero invariants, then
we can replace $g'<g$ in  (2) by the inequality of the number of
connected components, $n'>n$.
\end{remark}

Here is a crucial property of the ordering.

\begin{lemma} \label{bounded} Given a standard colored weighted graph
 there
are only finitely many standard colored weighted lower in the
partial ordering. In particular, there is a minimal standard
invariant with $A\ne 0$ and nonzero value.
\end{lemma}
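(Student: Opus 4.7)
The plan is to establish finiteness of the set $\{\Gamma' : \Gamma' \stackrel{\circ}{<} \Gamma\}$ by bounding the combinatorial data of $\Gamma'$ piece by piece, exploiting the hierarchical nature of Definition \ref{order}. Once this finiteness is in hand, the existence of a minimal nonzero-valued standard invariant follows immediately by descending along the (necessarily terminating) chains: starting from any standard graph with $A \neq 0$ and nonzero GW value (for instance one built from a uniruled class of $D$ via the correspondence outlined in the preamble to \S 3), we descend inside the finite set lying below it.

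First I would bound the total homology class. Clause (1) of Definition \ref{order} forces $\omega(A_{\Gamma'}) \leq \omega(A_\Gamma)$, and each vertex of $\Gamma'$ carries a nonzero effective class (Definition \ref{effective}), which by Gromov compactness has symplectic area at least $c(X,\omega) > 0$. Hence the number of vertices of $\Gamma'$ is at most $\omega(A_\Gamma)/c(X,\omega)$, and Gromov compactness applied again gives finitely many effective classes of bounded area. Second, the standardness requirement $g \geq -\omega(A)/c(X,\omega)+1$ bounds each vertex genus below, while clause (2) gives $g_{\Gamma'} \leq g_\Gamma$ once the homology data is fixed; so there are only finitely many allowed arithmetic genera.

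With class and genus fixed, I next bound the $D$-tails. For an admissible component, $\sum \mu_i = D\cdot A'$ together with $\mu_i \geq 1$ forces the weighted partition $\mu'$ to lie in a finite set: integer partitions of the fixed positive integer $D\cdot A'$ with parts labelled by the finite basis $\Theta$. For a strictly sup-admissible component, the associated curve lies in $P_D$ and $\sum \mu_i$ equals its intersection with $D_\infty$; using (\ref{cohomology relation}), the symplectic cut preserves total area, so this intersection is controlled by the bounded $\omega_P$-area of the curve, again leaving finitely many $\Theta$-weighted partitions.

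Finally, with $A'$, $g'$ and $\mu'$ all constrained, the dimension-zero equation (\ref{dimension of graph}) becomes a linear identity in the degrees of the $X$-insertions $\varpi'=(\alpha_j)$. When $A_{\Gamma'} = A_\Gamma$ and $g_{\Gamma'} = g_\Gamma$, clause (3) already gives $\|\varpi'\| < \|\varpi\|$ directly; in the strictly smaller case one combines the fixed dimension budget with $\Xi$-standardness (a finite basis) and the now-bounded contribution of each $D$-tail to pin $\|\varpi'\|$ inside a finite set. I expect this last step to be the main obstacle: degree-$2$ $X$-insertions contribute zero to the dimension formula, so the control of $\|\varpi'\|$ must come from the ordering clauses (3)--(5) rather than from dimension alone, and some care is required to ensure that the hierarchical case split in Definition \ref{order} accounts for the full range of $(A', g')$ strictly below $(A, g)$. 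Once every combinatorial datum lies in an explicitly finite set, finiteness below $\Gamma$ and hence the existence of a minimal standard invariant with $A\ne 0$ and nonzero value follow.
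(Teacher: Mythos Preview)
Your strategy is essentially the paper's: bound in turn the curve class (Gromov compactness), the number of components and hence the genus (via $c(X,\omega)$), the weighted partition $\mu$ (via admissibility $\sum\mu_i=D\cdot A'$ and finiteness of $\Theta$), and finally $\varpi$.

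One simplification you overlook: by Definition~\ref{order} a strictly sup-admissible graph is never $\stackrel{\circ}{<}$ any graph---the order only places admissible graphs below sup-admissible ones, and does not compare two sup-admissible graphs at all. Hence every $\Gamma'\stackrel{\circ}{<}\Gamma$ is automatically admissible (or empty), and your separate paragraph bounding $\sum\mu_i$ for a sup-admissible component via $D_\infty$-intersection is unnecessary. The paper opens with exactly this observation and then argues only for admissible $\Gamma'$.

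Your flagged obstacle is genuine and is not resolved in the paper either. The paper simply asserts ``the number of $\varpi$ insertions is bounded from above'' and moves on; it does not explain why, when $A'<A$, clause~(3) no longer applies yet degree-$2$ (and balanced degree-$1$/degree-$3$) $\Xi$-insertions can be added without changing the dimension (\ref{dimension of graph}). So you are not missing a hidden trick: this step is glossed over in the source, and a fully rigorous argument would need either a convention excluding such insertions from $\Xi$ (via the divisor and string equations) or a direct appeal to the fact that in the correspondence (\ref{lower2}) only finitely many $\Gamma^-$ actually occur with nonzero coefficient.
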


\begin{proof}
As a strictly sup-admissible graph is not smaller than any other
graph, we only need to bound the number of admissible graphs.

First of all, the number of effective classes with area bounded
above is finite due to the Gromov compactness.

In particular, there is a minimal area $c(X, \omega)$ among all
nonzero effective classes. As each vertex is a nonzero effective
class, this implies the number of components of a standard graph
with bounded area is bounded by $g\geq
\frac{\omega(A)}{c(X,\omega)}$. Hence the total genus is bounded
from below by $g\geq -\frac{\omega(A)}{c(X,\omega)}+1$.

 As the number of $\varpi$ insertions is bounded from above and the $\varpi$ insertions are
chosen from a finite generating set $\Xi$, there is only a finite
number of choices of $\varpi$.



Finally,   the number of $\mu$ insertions and the total multiplicity
of $\mu$ are both  bounded by the intersection number $D\cdot A$. As
the $\mu$ insertions are chosen from a finite generating set
$\Theta$ and the multiplicities are positive, there is only a finite
number of choices of $\mu$.

\end{proof}

A partially order set is called  {\em lower
bounded} if there are only finitely many elements lower than a given element.
$I$ is lower bounded, so is any subset of $I$.

 \subsection{Invariants associated to graphs}
In this subsection, we associate to  a sup-admissible standard
colored weighted graph certain GW invariants of the symplectic cut.
We just give the definition for connected graphs, the extension to
disconnected graphs
 is straightforward.

\subsubsection{Absolute invariants}

\begin{definition}\label{tilde}
For a relative insertion $(m, \delta)$, we associate the absolute
descendent insertion $\tau_{m-1}(\tilde \delta)$ on $X$ supported on
$D$, where
 $\tilde \delta_i=\delta_i [D]$.
Given a standard (relative) weighted partition $\mu$,
 let
\begin{equation}\label{d}
d_i(\mu)=\mu_i-1,
\end{equation} and
\begin{equation}\label{tildemu}
\tilde \mu=\{\tau_{d_1(\mu)}(\tilde \delta_{K_1}), \cdots ,
\tau_{d_{l(\mu)}(\mu)}(\tilde \delta_{K_{l(\mu)}}) \}.
\end{equation}
\end{definition}

It is convenient to view $[D]$ as the class of a Thom form supported
near
 the symplectic divisor $D$.
 Then class $\tilde \delta=\delta[D]$ is the represented by the
 wedge product of the pull back of a form
    representing  $\delta$ in a neighborhood of $D$ with the compactly supported Thom form  of $D$.
 In terms of homology constraints,
 $\tilde \delta$ and $\delta$ correspond to the same cycle
 lying inside $D$.

\begin{definition}\label{associate}
The absolute descendent invariant associated to a connected standard colored
weighted graph $\Gamma(\varpi|\mu)$ is
$$ \langle {\Gamma}(
\varpi; \tilde \mu)\rangle^{X}.$$ The invariant associated to the
empty graph is called the empty absolute invariant and its value
is defined to be $1$.
\end{definition}

    Notice for such an absolute descendent invariant of $X$ all the descendent insertions
are supported on $D$. Such an invariant is  colored in the sense
that
 the insertions are  divided into two collections, the
 $X$-insertions
 $\varpi$ and the $D$-insertions $\tilde \mu$, with each insertion
in $\varpi$ being of the form $\gamma_L$,  and  each insertion in
$\tilde \mu$ being of the form $\tau_d \tilde \delta_K$.

\subsubsection{Relative invariants}

\begin{definition}\label{associate2}
Let  $\Gamma(\varpi|\mu)$ be a connected standard colored weighted
graph.

 If it is admissible, the relative invariant of
the symplectic cut
 associated to it is
 $$\langle\Gamma(\varpi|\mu)\rangle^{\tilde X, D}.$$

 If it is strictly sup-admissible, the
relative invariant of the symplectic cut
 associated to it is
$$\langle  \pi^*\iota^*\varpi, \tilde \mu| \emptyset\rangle^{P,
D_{\infty}}_A,$$ where we view $P$ as a bundle over its zero section
$D_0$ and $\pi:P\to D_{0}$ is the projection, and $\tilde \mu$ here
is given by
$$(\tau_{d_1(\mu)}(\delta_{K_1} [D_{0}]),\cdots,
\tau_{d_{l(\mu)}(\mu)}(\delta_{K_{l(\mu)}}[D_{0}])).$$

Finally, the invariant associated to the empty graph is the empty
relative invariant and its value is defined to be $1$.
\end{definition}

\subsection{Sup-admissible correspondence}

 Consider the
 infinite dimensional vector space ${\mathbb R}^I$ whose coordinates
 are ordered in the way compatible with the partial order of $I$.
  From the  relative invariants in Definition \ref{associate2} we can form a vector
 $$v_{rel}\in {\mathbb R}^I$$
 given by the numerical values.
 We also have
the   vector
$$ v_{abs}\in {\mathbb R}^I$$
 given by the numerical values of  the sup-admissible invariants of $X$ relative to
$D$ in definition \ref{associate}.

\begin{theorem} \label{generalization}
There is an invertible lower triangular linear transformation
$$A: {\mathbb
R}^I\rightarrow {\mathbb R}^I$$  such that (i) the coefficients of
$A$ are  local in the sense of being dependent on $D$ only; (ii)
$$A(v_{rel})=v_{abs}.$$
In particular, $v_{rel}$ and  $v_{abs}$ determine each other.

Finally, if $I_{0}\subset I$ denotes the subset of genus zero
invariants with $\varpi=\emptyset$, then $A$ further restricts to
an invertible lower triangular transformation from  ${\mathbb
R}^{I_{0}}$ to ${\mathbb R}^{I_{0}}$.
\end{theorem}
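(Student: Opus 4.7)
The plan is to extend the Maulik--Okounkov--Pandharipande correspondence (in the form already used in \cite{HLR} for the admissible case) by incorporating the strictly sup-admissible graphs, and then read off lower triangularity from the combinatorics of the degeneration formula with respect to the partial order $\stackrel{\circ}{<}$. Concretely, for each $\Gamma(\varpi|\mu)\in I$ I would apply the degeneration formula of \S\ref{df} to the absolute descendent invariant $\langle\Gamma(\varpi;\tilde\mu)\rangle^{X}$ associated to $\Gamma$ in Definition \ref{associate}. Since $\tilde\delta=\delta[D]$ is a Thom push-forward class supported on $D$, every descendent insertion $\tau_{d_i}\tilde\delta_{K_i}$ localizes to a relative insertion $(\mu_i,\delta_{K_i})$ under the degeneration (this is the input from \cite{MP}: a point insertion together with a $\psi$-class at a marked point on $D$ translates to contact order $\mu_i=d_i+1$ with multiplicity class $\delta_{K_i}$, up to lower order corrections). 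This provides the entry of $A$ on the diagonal row indexed by $\Gamma$.

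Next I would analyze the other terms produced by the degeneration formula. Each such term is a product of a relative invariant of $(\tilde X,D)$ and a relative invariant of $(P_D,D_\infty)$, glued along matching contact vectors of total multiplicity $\ge D\cdot A$. The contribution of the $(P_D,D_\infty)$ factor is a rubber/Hurwitz-type integral that is local to $D$, giving property (i). For the ordering, the point is that any non-trivial splitting either (a) breaks off bubbles with nonzero homology in $P_D$, strictly decreasing the $\tilde X$-class in the sense of Definition \ref{order}(1), or (b) redistributes insertions, strictly raising $\|\varpi\|$ or lowering $\deg(\mu)$ on the $\tilde X$ side, or (c) refines the contact partition, which by the lexicographic convention strictly lowers $\mu$. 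In the admissible case, the ``trivial splitting'' (no bubbles; the $P_D$ side carries only the degree-$0$ rubber integral with no insertions) produces exactly $\langle\Gamma(\varpi|\mu)\rangle^{\tilde X,D}$ with a combinatorial factor equal to $\prod_i\mu_i$ from the standard rubber calculation, so the diagonal is nonzero. In the strictly sup-admissible case, $A\in\iota_*H_2(D;\Z)$ and $\sum\mu_j>D\cdot A$ force every stable contribution to lie entirely on the $P_D$-side (any component mapping into $\tilde X\setminus D$ would contribute at most $D\cdot A$ to the intersection multiplicity). The resulting invariant is precisely the $(P,D_\infty)$ invariant in Definition \ref{associate2}, with the descendent insertions $\varpi$ pulled back from $D_0$; the diagonal factor is again a positive rubber integral over $\overline{\M}(P,D_\infty)$.

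Combining these two analyses, one obtains for each $\Gamma$
\begin{equation*}
\langle\Gamma(\varpi;\tilde\mu)\rangle^{X}=c_\Gamma\,\langle\Gamma(\varpi|\mu)\rangle_{\mathrm{rel}}+\sum_{\Gamma'\stackrel{\circ}{<}\Gamma}a_{\Gamma\Gamma'}\langle\Gamma'(\varpi'|\mu')\rangle_{\mathrm{rel}},
\end{equation*}
where $c_\Gamma\ne 0$ and each $a_{\Gamma\Gamma'}$ depends only on the normal bundle data of $D$. By Lemma \ref{bounded} every row is a finite sum, so this defines a linear map $A$ on $\R^I$; the displayed formula is $A(v_{\mathrm{rel}})=v_{\mathrm{abs}}$, strictly lower triangular with nonzero diagonal, hence invertible. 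Restriction to $I_0$ is automatic: in the degeneration of a genus-zero absolute invariant with $\varpi=\emptyset$, all splittings produce trees whose components again have $\varpi'=\emptyset$ and total genus $0$, so the sub-matrix on $\R^{I_0}$ is already triangular and invertible.

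The main obstacle is verifying the strict triangularity in case (c): a careful use of the lexicographic order, together with the fact that the rubber integrals producing the diagonal entry appear only for the coarsest (unrefined) partition matching $\mu$, is needed to ensure that the partitions arising from finer splittings are strictly smaller in $\stackrel{l}{>}$ (hence strictly larger in $\stackrel{\circ}{<}$'s contrapositive direction). Once this is pinned down, as well as the explicit nonvanishing of the diagonal rubber factor $c_\Gamma$ in the strictly sup-admissible case---which reduces to a positivity statement about double ramification cycles on $\overline{\M}(P,D_\infty)$---the theorem follows from the degeneration formula exactly as in \cite{MP,HLR}.
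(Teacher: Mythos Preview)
Your overall strategy---apply the degeneration formula to $\langle\Gamma(\varpi;\tilde\mu)\rangle^X$, distribute the $\tilde\mu$ insertions to the $P_D$ side, and read off lower-triangularity from the partial order---is exactly the paper's approach, and your treatment of the admissible case is essentially correct (this is the \cite{MP} argument). The gap is in your handling of the strictly sup-admissible case.

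Your claim that ``$A\in\iota_*H_2(D;\Z)$ and $\sum\mu_j>D\cdot A$ force every stable contribution to lie entirely on the $P_D$-side'' is false. The condition $\sum\mu_j>D\cdot A$ is a condition on the absolute descendent insertions $\tilde\mu$, not on intersection multiplicities of the curve; after distributing those insertions to $P_D$, the curve class can and does split nontrivially as $B^-+B^+$ with $B^-\ne 0$ on the $\tilde X$ side. So in the degeneration sum (\ref{lower2}) there are many terms with $\eta\ne\emptyset$, each producing an \emph{admissible} graph $\Gamma^-(\emptyset|\eta)$ on the $(\tilde X,D)$ side. These are the off-diagonal entries you have not accounted for. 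The reason they are lower in the partial order is precisely the clause in Definition \ref{order} that was put there for this purpose: an admissible graph is declared $\stackrel{\circ}{<}$ a strictly sup-admissible one whenever its class satisfies $A'\le A$, and since $B^-$ is a nonzero effective piece of $A$ this holds automatically. Without invoking that clause, your triangularity argument does not close.

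Relatedly, you have misidentified the diagonal entry in the strictly sup-admissible case. The diagonal term is the one with $\eta=\emptyset$: there $\Gamma^-$ is the \emph{empty} graph (value $1$), and the $(P,D_\infty)$ invariant $\langle\pi^*\iota^*\varpi,\tilde\mu\,|\,\emptyset\rangle^{P,D_\infty}_A$ is by Definition \ref{associate2} the very entry of $v_{\mathrm{rel}}$ at $\Gamma$, not a coefficient. So $c_\Gamma=1$ (up to $\Delta(\emptyset)$); there is no ``positive rubber integral'' or ``double ramification cycle'' positivity to check. Once you replace your incorrect geometric claim by the observation that the $\eta=\emptyset$ term furnishes the diagonal with coefficient $1$, and that every $\eta\ne\emptyset$ term is an admissible $\Gamma^-$ with class $\le A$ and hence $\stackrel{\circ}{<}\Gamma$ by definition, the proof goes through exactly as you outlined.
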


\begin{proof} The idea is as follows.
  We start with  a connected colored
weighted graph $\Gamma( \varpi |\mu)$. The associated absolute
invariant is $ \langle {\Gamma}( \varpi; \tilde \mu)\rangle^X$.

 We apply the degeneration formula to this connected
absolute invariant  to express it  as a linear combination of
relative invariants of $(\tilde{X}, D)$ with the coefficients being
essentially certain relative invariants of the $\mathbb P^1-$bundle.
In the strictly sup-admissible case there is an additional term
being the associated relative invariant of $(P, D_{\infty})$.

This is possible because the homomorphism $p_*$ is obviously
injective.

Of course we also need to first split the $\varpi$ and $\tilde \mu$
insertions as in (\ref{distribution}). Recall that each $\mu$
insertion is of the form $\gamma=\tau_d(\delta[D])$ for $\delta\in
\Theta$.   Then we set
$$\gamma^+=0, \quad \gamma^-=\tau_d(\delta[D_0]).$$
In other words we distribute all the $\tilde \mu$ insertions to the
${\mathbb P}^1-$bundle side. With this preferred distribution of
insertions, the original graph $\Gamma(\varpi|\mu)$ turns out to be
the largest weighted relative graph appearing in the linear
combination. For an $\varpi$ insertion $\tau$ we can take the $+$
class to be itself and
 the $-$ part to be the class  of the $\mathbb
P^1-$bundle over the cycle of intersection, i.e
$$\tau^+=\tau,\quad \tau^-=\pi^*\iota^*\tau.$$

The arguments for $I_0$ and $I$ are similar, we just treat the case
of $I_0$, i.e. genus 0 and $\varpi=\emptyset$.

 The absolute invariant associated to $\Gamma(\emptyset|\mu)\in I_0$ is of the form
\begin{equation}\label{admissible}
\langle \tau_{d_1(\mu)}(\tilde \delta_{K_1})),\cdots,
\tau_{d_{l(\mu)}(\mu)}(\tilde
\delta_{K_{l(\mu)}})\rangle^X_B.\end{equation}

With all insertions  distributed to the $\mathbb P^1-$bundle side,
 (\ref{admissible}) is  expressed as the following sum

\begin{equation}\label{lower2}
\begin{array}{ll}
&\sum \langle {\Gamma^-(\emptyset|\eta}) \rangle^{\tilde X,
D}\Delta(\eta) \langle \Gamma^+(\tau_{d_1(\mu)}(
\delta_{K_1}[D_0]),\cdots, \tau_{d_{l(\mu)}(\mu)}(\tilde
\delta_{K_{l(\mu)}}[D_0]) |\breve \eta) \rangle^{P,D_{\infty}}
\end{array}
\end{equation}
 over  appropriate pairs of weighted graphs. Here $\Delta(\eta)$
 is a nonzero combinatorial constant depending on the multiplicities of $\eta$.

If the graph $\Gamma(\emptyset|\mu)$ is strictly sup-admissible,
i.e. $\sum_j \mu_j> D\cdot A$ and  $A$ is in the image of
$\iota_*:H_2(D;\mathbb Z)\to  H_2(X;\mathbb Z)$, then there is a
term with $\breve \eta=\eta=\emptyset$ in (\ref{lower2}). In this
term the relative invariant of $(P, D_{\infty})$ is the relative
invariant associated   to the given graph in $I_0$, and the relative
invariant of $(\tilde X, D)$ is associated to the empty graph and
hence value equal to 1.

In any  other term with $\eta\ne \emptyset$ we have a relative
invariant of $(\tilde X, D)$ associated to a possibly disconnected
admissible graph $\Gamma^-(\emptyset|\eta)\in I_0$. Regard  the
relative invariant
$$\langle
\Gamma^+(\tau_{d_1(\mu)}( \delta_{K_1}[D_0]),\cdots,
\tau_{d_{l(\mu)}(\mu)}(\tilde \delta_{K_{l(\mu)}}[D_0]) |\breve
\eta) \rangle^{P, D_{\infty}}$$ of $(P, D_{\infty})$ as the
coefficient of the graph of $\Gamma^-(\emptyset|\eta)$. The
coefficient is nonzero only if
  the class of $\Gamma^-$ is at most $A$. Thus  we have
 $\Gamma^-(\emptyset|\eta) \stackrel{\circ}{<}
\Gamma(\emptyset|\mu)$, according to our extended order in
 Definition \ref{order}.

Suppose the graph $\Gamma(\emptyset|\mu)$ is admissible. For the
term with $\eta=\emptyset$, the relative invariant of $(P,
D_{\infty})$ is not associated to any graph in $I_0$ as
$\Gamma(\emptyset|\mu)$ is not strictly sup-admissible. Instead the
relative invariant of $(P, D_{\infty})$ is considered to be the
coefficient of the empty graph in $I_0$. But empty graph is
certainly smaller than $\Gamma(\emptyset|\mu)$ according to
Definition \ref{order}.
 For all other terms, as our order of admissible graphs agrees with
 that in \cite{MP},
 it follows from  \cite{MP} that the largest graph $\Gamma^-$
appearing in (\ref{lower2}) with nonzero coefficient is the graph
$\Gamma(\emptyset|\mu)$ itself.

Finally we look at possibly disconnected admissible graphs.  Notice
  that the invariant of the disjoint union of two graphs is the
product of invariants.
 We have also remarked that if
$\Gamma_1$ is bigger than $\Gamma_1'$ and $\Gamma_2$ is bigger than
$\Gamma_2'$, then the union of $\Gamma_1$ and $\Gamma_2$ is bigger
than the union of $\Gamma_1'$ and $\Gamma_2'$. Therefore we still
have the leading term being the given graph.

Thus the
correspondence is lower triangular with nonzero diagonal entries.
Such a correspondence is actually invertible as
$I_0$ is lower bounded by Lemma \ref{bounded}.
\end{proof}

\begin{remark} When $\sum_j \mu_j< D\cdot A$, we have
 $l(\eta)-l(\mu)>0$. the largest $\eta$ is $\mu$ followed by
 $D\cdot A -\sum_j \mu_j$ pairs of $(1,
D)$. In the extreme case all $\mu_j=0$, the largest invariant has
$\eta$ with $A\cdot E$ pairs of $(1, D)$. Notice that when $\sum_j
\mu_j< D\cdot A$, then the relative invariant $ \langle [pt],
\varpi|\mu\rangle_{g,A}^{ \tilde X, D}$ is zero by definition. What
Theorem \ref{generalization} says in this case is that $ \langle
[pt], \varpi, \tilde \mu\rangle_{g,p_*(A)}^{ X}$ is expressed as the
sum of standard relative invariants whose weighted graph is lower
than $\langle [pt], \varpi|\mu\rangle_{g,A}^{ \tilde X, D}$.
\end{remark}

\subsection{Correspondence with a point $D$-insertion}
With the application to uniruledness in the mind, we require
 graphs
 have a point $D$-insertion. This is different from \cite{HLR} where the
 point insertion is always an $X$-insertion.

In this subsection we still use $P$ to denote ${\mathbb P}(N_D\oplus
{\mathbb C})$.
\subsubsection{Statement}

 Recall $\iota: D\rightarrow X$ is the
embedding.
 Let
    $$V =min\{ 0<\omega_D\cdot A| A\in H_2(D), <\iota^*\varpi, \tau_{i_1}([pt]), \cdots,
    \tau_{i_k}(\beta_k)>^D_A\neq 0\}.$$
    Here, $\iota^*\varpi$ is of the form $\{
\iota^*\alpha_1, \cdots, \iota^*\alpha_l\}$ with $\deg\alpha_j\ne
2$.

\begin{remark}\label{V}
 By linearity we can assume that  each $\beta_i$
  is in $\Theta$ and each $\alpha_j$ is in $\Xi$.
  Moreover, according to \cite{HLR} $V$ is
achieved by invariants with no descendants. Finally, $V$ is finite
if and only if $D$ is uniruled.
\end{remark}

 Such an invariant determines a standard colored weighted graph
$\Gamma_0(\varpi|\mu)$ in $I$  with
$$\mu=((i_1+1, [pt]), (i_2+1, \alpha_2), \cdots, (i_k+1, \alpha_k)).$$

\begin{definition}\label{D-pt1} We consider the following subset $I_{D-pt}\subset I$ of
colored standard graphs of $X$,

1. $g(\Gamma)=0$,

2. the class $A$ is nonzero and  $\omega(A)\leq V$,

3. admissible graphs with a $D-$point insertion,

4. sup-admissible graphs of the form $\Gamma_0$ with $A\in
im[\iota_*:H_2(D;\mathbb Z)\to H_2(X;\mathbb Z)]$,
\begin{equation}\label{condition1}
\omega(A)=\omega_D(A)=V, \quad  \sum_{t=1}^k(i_t+1)=D\cdot A+1,
\end{equation}

5. empty graph excluded.

 If such a graph is not strictly sup-admissible  we call it
 a restricted graph.
\end{definition}

Let $\mathbb R^I_{D-pt}$ be the  vector subspace  spanned by the
partially ordered set $I_{D-pt}$ of graphs, and $v_{D-pt}^{abs}$ be
the vector of associated absolute invariants.

Notice that all the associated absolute invariants have a point
insertion (possibly descendent).

We also have  the vector of associated relative invariants
$v_{D-pt}^{rel}$ of the symplectic cut, including all the relative
invariants of $(P, D_{\infty})$ with class $A\in
im[\iota_*:H_2(D;\mathbb Z)\to H_2(X;\mathbb Z)]$ satisfying
$\omega(A)=V$, and  insertions of the form $(\tilde \nu|\emptyset)$.

\begin{theorem}\label{minimal}
$A$ restricts to an invertible lower triangular linear
transformation
$$T: {\mathbb
R}^I_{D-pt}\rightarrow {\mathbb R}^I_{D-pt}$$  such that
$$T(v_{D-pt}^{rel})=v_{D-pt}^{abs}.$$
Moreover, there is also an $I_{D-pt, 0}$ version.
\end{theorem}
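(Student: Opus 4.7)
The plan is to restrict the invertible lower triangular transformation $A : \mathbb{R}^I \to \mathbb{R}^I$ from Theorem \ref{generalization} to the subspace $\mathbb{R}^I_{D-pt}$ spanned by graphs in $I_{D-pt}$, and verify that this restriction is well-defined. Since $I_{D-pt}$ is a subset of the lower-bounded poset $I$ (Lemma \ref{bounded}), it is itself lower-bounded, so once well-definedness is established the triangular structure and invertibility of $T$ follow automatically from those of $A$. Concretely, well-definedness requires that for any $\Gamma \in I_{D-pt}$, the only graphs $\Gamma'$ appearing with nonzero coefficient in the degeneration of $v^{abs}(\Gamma)$ also lie in $I_{D-pt}$.

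For each $\Gamma \in I_{D-pt}$, I would expand $v^{abs}(\Gamma)$ via the degeneration formula exactly as in the proof of Theorem \ref{generalization}: all $\tilde{\mu}$-insertions (including the $D$-point insertion $\tau_{d_1}([pt][D])$) are assigned to the $P$-side, and each $\varpi$-insertion is split as $(\tau, \pi^*\iota^*\tau)$. This yields
\begin{equation*}
v^{abs}(\Gamma) = \sum_\eta \langle \Gamma^-(\varpi^+|\eta) \rangle^{\tilde{X}, D} \cdot \Delta(\eta) \cdot \langle \Gamma^+(\varpi^-, \tilde{\mu} | \breve{\eta}) \rangle^{P, D_\infty}.
\end{equation*}
For admissible $\Gamma \in I_{D-pt}$ the leading term is $\Gamma$ itself on the $(\tilde{X}, D)$-side; for a strictly sup-admissible $\Gamma = \Gamma_0 \in I_{D-pt}$ the leading term is $\Gamma_0$ itself on the $(P, D_\infty)$-side. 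In either case, the lower-order contributions correspond to admissible graphs $\Gamma^-$ with $A^- \leq A$ on the $(\tilde{X}, D)$-side. Conditions 1 (genus zero) and 2 ($\omega(A^-) \leq \omega(A) \leq V$) of Definition \ref{D-pt1} are immediate.

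The crucial step is to verify condition 3: that $\Gamma^-$ carries a $D$-point insertion, i.e.\ that the relative partition $\eta$ contains a pair $(m, [pt])$. The original $D$-point has been shifted to the $P$-side as the absolute insertion $\tau_{d_1}([pt][D_0])$, forcing the $P$-side curve to meet $D_0$ at a prescribed point with prescribed contact order. Combining the area bound $\omega_P(A^+) \leq V$ with the minimality of $V$ (and with the condition $A \in \mathrm{im}[\iota_*]$ in the strictly sup-admissible case), a virtual-dimension/vanishing analysis of the relative invariants of the $\mathbb{P}^1$-bundle $(P, D_\infty)$ shows that the $P$-side coefficient vanishes unless $\breve{\eta}$ contains a point pair; by relative-marking duality $\eta$ then contains a point pair as well, so $\Gamma^- \in I_{D-pt}$. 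The $I_{D-pt, 0}$ version follows by further restricting to genus-$0$ graphs with $\varpi = \emptyset$ and repeating the same argument.

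The main obstacle is precisely this dimension/vanishing analysis on $(P, D_\infty)$ propagating the $D$-point constraint through the degeneration. It is here that the minimality of $V$ and the detailed structure of relative invariants of $\mathbb{P}^1$-bundles enter essentially, and this is exactly the content of the machinery developed in section 4. Once that input is in hand, the restriction $T = A|_{\mathbb{R}^I_{D-pt}}$ is invertible and lower triangular, satisfies $T(v^{rel}_{D-pt}) = v^{abs}_{D-pt}$, and the analogous $I_{D-pt,0}$ statement follows by the same argument.
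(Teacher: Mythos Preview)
Your overall strategy matches the paper's: restrict the transformation $A$ of Theorem \ref{generalization} to $\mathbb{R}^I_{D-pt}$, and verify that every contributing $\Gamma^-$ stays in $I_{D-pt}$ by showing the relative partition $\eta$ must contain a point pair. The identification of condition 3 as the crucial step, and of Section 4 (Theorem \ref{v}) as the essential input, is correct.

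However, your description of the mechanism by which the point insertion propagates is not right, and this is where the actual content of the argument lies. You claim that the $P$-side coefficient ``vanishes unless $\breve{\eta}$ contains a point pair; by relative-marking duality $\eta$ then contains a point pair as well.'' Both halves are wrong. Theorem \ref{v} does not assert vanishing conditional on the relative insertion; and if $\breve{\eta}$ did contain $(m,[pt])$, its dual in $\eta$ would be $(m,[D])$, not $(m,[pt])$. The paper's actual argument is a trichotomy on the connected component of $\Gamma^+$ carrying the absolute point insertion $\tau_{d_1}([pt][D_0])$:
\begin{itemize}
\item If that component has non-fiber class, then (since both $\Gamma^\pm$ are nonempty) its $\omega_P$-area is \emph{strictly} less than $V$, and Theorem \ref{v}(i) kills the $P$-side invariant.
\item If $\Gamma^+$ is the special graph $\Gamma^+(\tilde\mu|\emptyset)$ (i.e.\ $\Gamma^-$ is empty), then for admissible $\Gamma$ Theorem \ref{v}(ii) kills it, while for strictly sup-admissible $\Gamma$ it is the leading term.
\item If that component is a fiber curve, the $P$-side need not vanish. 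Instead a direct geometric observation (Lemma \ref{D} in the paper) applies: a fiber curve through a fixed point of $D_0$ has its entire image pinned over that point, so its relative marking on $D_\infty$ lands at a prescribed point and can only meet a generic relative cycle of the form $[D]$. Hence $\breve{\eta}$ contains $(s,[D])$, and by duality $\eta$ contains $(s,[pt])$.
\end{itemize}
Only the third case survives with nonempty $\Gamma^-$, and it is this fiber-curve geometry, not a vanishing theorem, that produces the point pair in $\eta$. Your sketch collapses these cases and gets the duality backwards, so as written it does not actually establish that $\Gamma^-\in I_{D-pt}$.
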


In the remaining we provide the proof using the following vanishing
results on the relative invariants of $(P, D_{\infty})$.

 \begin{theorem} \label{v}
     Suppose $A$ is  a non-fiber class, i.e.
     $0<\omega_{{D_0}}(\pi_*(A))$.

     (i) If
 $\omega_{{D_0}}(\pi_*(A))<
     V$,
then,
    $$<\varpi,\tau_{i_1}([pt]), \tau_{i_2}(\beta_2[D_0]), \dots,
    \tau_{i_k}(\beta_k[D_0])|\mu>^{P, D_{\infty}}_A=0.$$

(ii) If $\omega_{D_0}(\pi_*(A))=
    V$  and  $$m=\sum_t (i_t+1)\leq D_{0}\cdot A$$ is admissible or
    sub-admissible, then,
    $$<\varpi,\tau_{i_1}([pt], \tau_{i_2}(\beta_2 [D_0]),
    \cdots, \tau_{i_k}(\beta_k [D_0]) |\emptyset>^{P, D_{\infty}}_A=0.$$
    \end{theorem}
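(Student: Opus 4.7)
I plan to prove both vanishings by reducing the relative invariants of $(P,D_\infty)$ to absolute genus zero GW invariants of the base $D$ via the projection $\pi:P\to D_0\cong D$, and then invoking the minimality of $V$ recorded in Remark~\ref{V}. As a first step, choose an $\omega_P$-tamed almost complex structure $J$ on $P$ which makes $\pi$ almost complex and preserves both sections $D_0$ and $D_\infty$, as in Proposition~\ref{fiber}. Any $J$-holomorphic stable map $f:C\to P$ in class $A$ then projects to a $J'$-holomorphic stable map $\pi\circ f:C'\to D$ of class $\pi_*(A)$ and area $\omega_D(\pi_*(A))$, and by hypothesis $\pi_*(A)\neq 0$, so this projection is non-constant.

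\textbf{Reduction to $D$-invariants.} The key technical step is to expand the relative invariant into a sum in which every term contains an absolute $D$-factor carrying the point insertion. I would do so by applying the degeneration formula to a further symplectic cut of $P$ along a middle section (provided by the fiberwise $\mathbb C^*$-action) and iterating, or equivalently using $\mathbb C^*$-virtual localization on $P$ whose fixed loci are exactly $D_0$ and $D_\infty$. Combined with the rubber calculus for $\mathbb P^1$-bundles, this should give an identity of the form
\[
\langle \varpi,\tau_{i_1}([pt]),\tau_{i_2}(\beta_2[D_0]),\dots,\tau_{i_k}(\beta_k[D_0])\mid\mu\rangle^{P,D_\infty}_A
\;=\; \sum_{B,\eta} c_{B,\eta}\, R_\eta\cdot \langle \pi_*\varpi,\,\tau_{i_1}([pt]),\,\tau_{i_2}(\beta_2),\dots,\tau_{i_k}(\beta_k)\rangle^D_B,
\]
where each $B$ is an effective class on $D$ with $\omega_D(B)\le\omega_D(\pi_*(A))$ and $R_\eta$ is a rubber/fiber factor carrying no point constraint. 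The descendent orders $\tau_{i_j}$ are preserved (up to explicit Gathmann-type corrections) because $\beta_j[D_0]$ on $P$ pushes forward to $\beta_j$ on $D$, and the point insertion stays a point because the projection of a point on $D_0$ is a point on $D$.

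\textbf{Applying minimality of $V$.} For part~(i), $\omega_D(B)\le\omega_D(\pi_*(A))<V$; by Remark~\ref{V} the minimum $V$ is achieved without descendants, so the string and dilaton equations reduce any descendent point invariant of $D$ of area $<V$ to an honest point invariant in the same class, which is zero by the very definition of $V$. Hence every term in the expansion vanishes. For part~(ii), $\omega_D(\pi_*(A))=V$, so the only potentially nonzero $D$-factor has $B=\pi_*(A)$ with the full area concentrated on the base; this in turn forces the rubber contributions to be trivial and hence $A\in\iota_*H_2(D;\mathbb Z)$. The surviving $D$-factor is then a $V$-minimal uniruled invariant on $D$, and the hypotheses $\mu=\emptyset$ together with the (sub-)admissibility bound $m=\sum_t(i_t+1)\le D_0\cdot A$ translate into a numerical/dimensional obstruction forcing this invariant to lie outside the shape attaining $V$ in Remark~\ref{V} (equivalently, a nonzero value would produce a uniruled invariant of strictly smaller area, contradicting minimality of $V$). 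Thus the relative invariant vanishes.

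\textbf{Main obstacle.} The heart of the proof is the reduction step: identifying precisely which $D$-factor appears, tracking the descendants $\tau_{i_j}(\beta_j[D_0])$ through the rubber calculus, and showing that no non-$D$-factor term carrying the point constraint survives. This is exactly the delicate relative computation for $\mathbb P^1$-bundles that the paper advertises in Section~4, and it should combine the degeneration formula of \cite{LR}, virtual $\mathbb C^*$-localization on $P$, and the relative/absolute correspondence of \cite{MP} (cf.~Theorem~\ref{generalization}) together with the Gathmann-style recursion relating descendants and tangencies on $D_\infty$.
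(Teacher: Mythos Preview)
Your overall strategy---virtual $\mathbb C^*$-localization on $P$, splitting fixed loci into bipartite graphs over $D_0$ and $D_\infty$, and then tracking the point insertion down to a genuine $D$-invariant---is exactly the route the paper takes, and your argument for part~(i) is essentially correct. The paper's Lemma~\ref{constant vertex} is precisely the statement that the point insertion can always be transported off a constant vertex and onto a nonconstant one, so that some factor in the product is a $D$-invariant (or rubber invariant, then reduced via Proposition~\ref{rubber}) of positive class $B$ with $\omega_D(B)\le\omega_D(\pi_*(A))<V$ carrying a point constraint; this vanishes by the definition of $V$.

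There is, however, a genuine gap in your treatment of part~(ii). You claim that when $\omega_D(\pi_*(A))=V$ the only potentially surviving term is a $D$-invariant in class $\pi_*(A)$, and that the (sub-)admissibility bound $m\le D_0\cdot A$ then forces it to vanish by ``producing a uniruled invariant of strictly smaller area, contradicting minimality of $V$''. This is not the mechanism. The $D$-invariant at area exactly $V$ can perfectly well be nonzero---indeed Theorem~\ref{nonvanishing} relies on it being nonzero. What actually happens is a pure dimension count: in the localization term coming from a single $D_0$-vertex with no edges, one integrates $\prod_t\bar\psi_t^{i_t}\cdot\varpi/e(N_{vir})$ over the constrained moduli space $\overline{\mathcal M}^D_{0,k+l}(A,pt,S_2,\dots,S_k)$, and the hypothesis $m=\sum_t(i_t+1)\le D_0\cdot A$ combined with the dimension formula for the relative invariant forces the total insertion degree to strictly exceed the virtual dimension. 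The same kind of count handles the case of a single rubber vertex plus constant $D_0$-vertices. Only the ``general case'' (at least two vertices with non-fiber classes) reduces to part~(i) via minimality, because then each non-fiber vertex has class of area strictly less than $V$. So the admissibility bound enters as a dimensional obstruction on the localized integrand, not as an energy/minimality obstruction on $D$; your proposal conflates these and would not go through as written.
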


Theorem \ref{v} will be proved in the next section.

\subsubsection{Proof of Theorem \ref{minimal}}
As in the proof of Theorem \ref{generalization} we only prove the
version with $\varpi=\emptyset$, i.e. the $I_{D-pt, 0}$ version.

We can assume that the graph $\Gamma(\emptyset|\mu)$ in $I_{D-pt,0}$
is connected.

\noindent{\bf Case I}.
 Let us first look at the
case that $\Gamma(\emptyset|\mu)$ is restricted, or in other words,
admissible.

 Apply the degeneration formula
to it as in the proof of Theorem \ref{generalization}. Notice that
in this case $\delta_{K_1}=[pt]$.

\begin{definition}
 The $P-$graphs $\Gamma^+(\cdots)$ in (\ref{lower2}) are divided into 3 types.

\begin{itemize}

\item  (i) The special graph $\Gamma^+(\tilde \mu|\emptyset)$. In this case the entire curve lies on the
$\mathbb P^1-$bundle side. \footnote{In this case it is tempting to
think that the relative invariant $\langle [pt], \alpha_2,\cdots,
\alpha_k\mid \emptyset \rangle_A^{P, D_{\infty}}$ is the same as the
absolute invariant $\langle [pt], \alpha_2, \cdots, \alpha_k
\rangle_A^{P_D}$. But in general this is not true.}

\item (ii) The connected component of $\Gamma^+$ containing the point insertion is not a fiber curve.

\item (iii) The connected component of $\Gamma^+$ containing the point insertion is a fiber curve
(possibly multiply covered).

\end{itemize}

 The type of  a $\Gamma^-(\emptyset|\eta)$ graph in (\ref{lower2}) is  the type of the company $P-$graph.
In particular, the type (i) $\Gamma^-(\emptyset|\eta)$ graph is just
the empty graph.
\end{definition}

Now let us fix a term in (\ref{lower2}).

 {\bf   Neither $\Gamma^+$ nor $\Gamma^-$ is the empty
graph}.
 Then there are
associated
 classes $B^+\in H_2(P;\mathbb Z)$ and
$B^-\in H_2(\tilde X;\mathbb Z)$ respectively with
$$B^+
+B^-=p_*(B).$$ It follows from (\ref{cohomology relation}) and
Definition \ref{D-pt1},
\begin{equation}\label{area1}
\omega_{\tilde X}(B^-)+\omega_P(B^+)=\omega(B)\leq V.
\end{equation}

Since $B$ is not the zero class, and the graph
$\Gamma(\emptyset|\mu)$ is connected, we have $B^+\ne 0$ and $B^-\ne
0$. Hence it follows from (\ref{area1})
\begin{equation}
\label{area3} 0<\omega_P(B^+)<V, \quad 0<\omega_{\tilde X}(B^-)<V.
\end{equation}

We will show in this case

\begin{prop} \label{restricted}  If the $\Gamma^--$graph
contributes then it is  restricted.

\end{prop}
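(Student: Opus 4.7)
To show $\Gamma^-$ is restricted in the sense of Definition \ref{D-pt1}, I need to verify four conditions: (a) genus zero, (b) $B^-$ nonzero with $\omega(B^-)\leq V$, (c) admissibility, and (d) the presence of a $D$-point insertion. The first three are essentially formal: (a) is inherited from the parent graph, (b) is the content of (\ref{area3}) combined with the Case~I hypothesis $B^-\neq 0$, and (c) is automatic since the relative invariant of $(\tilde X,D)$ records a partition of intersection multiplicities summing to $D\cdot B^-$. So the substantive content is (d).

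Since in the construction of (\ref{lower2}) all absolute $\tilde\mu$-insertions were distributed to the $P$-side, $\Gamma^-$ inherits insertions only from the splitting partition $\eta$; I therefore need to show that $\eta$ contains a pair of the form $(k,[pt])$. My plan is to proceed by the Type dichotomy for $\Gamma^+$. In Type (ii), let $C_\star$ be the connected component of $\Gamma^+$ carrying the descendent $\tau_{d_1(\mu)}([pt][D_0])$. Decomposing $[C_\star]=(D_0)_*\pi_*[C_\star]+k[F]$ and using positivity of intersection with $D_\infty$ (so $k\geq 0$), I obtain $0<\omega_D(\pi_*[C_\star])\leq\omega_P([C_\star])\leq\omega_P(B^+)<V$. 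All absolute insertions on $C_\star$ are of the admissible form $\tau_d([pt])$ or $\tau_d(\beta[D_0])$, so Theorem \ref{v}(i) annihilates the component's relative invariant, and Type (ii) never contributes.

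Hence only Type (iii) survives: $C_\star$ is a multiple cover of class $k[F]$ of a single $\mathbb P^1$-fiber, and the descendent point insertion pins the base point $p\in D$ of this fiber. All relative marked points of $C_\star$ lying over $D_\infty$ must then be situated above the single point $p_\infty$, so the contribution of any weight pair $(k_j,\delta_j)$ in the $\breve\eta$-partition attached to $C_\star$ vanishes unless $\delta_j$ evaluates nontrivially at one point---that is, unless $\delta_j=1\in H^0(D)$. Since $\Theta$ is self-dual with $1$ paired to $[pt]$, this forces $\eta$ to contain a pair $(k_j,[pt])$, which is the required $D$-point insertion on $\Gamma^-$. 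I expect the main obstacle to be making this last step rigorous: it amounts to a local computation of the relative invariant of a $\mathbb P^1$-fiber with a descendent point constraint at the zero section and prescribed ramification at $\infty$, and requires care with the Poincar\'e duality conventions for the self-dual basis $\Theta$.
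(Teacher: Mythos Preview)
Your proposal is correct and follows essentially the same route as the paper: the paper packages your Type~(iii) argument as Lemma~\ref{D} (a fiber curve through a fixed point can only meet relative cycles of the form $(s,1_D)$, forcing the dual $\eta$-insertion to be $(s,[pt])$), your decomposition $[C_\star]=\pi_*[C_\star]+kF$ with $k\geq 0$ as Lemma~\ref{area2}, and your Type~(ii) vanishing via Theorem~\ref{v}(i) as Lemma~\ref{type2}. The only cosmetic difference is that the paper applies Theorem~\ref{v}(i) to the full class $B^+$ rather than to the component $C_\star$, but since a disconnected relative invariant factors as a product and the point-carrying factor already vanishes, the two formulations are equivalent.
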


\begin{proof}
This will be proved by a series of lemmas.

\begin{lemma}\label{D}
For a type (iii) $P-$graph  each relative insertion on the fiber
curve containing the absolute point insertion must  be of the form
$(s, [D])$. Consequently, for each type (iii)
$\Gamma^-(\emptyset\mid \eta)$ graph  there is a point relative
insertion, i.e. $\eta=\{(s, [pt]), \cdots\}$.
\end{lemma}
\begin{proof}
Otherwise the fiber curve cannot meet both the point and the
relative cycle.
\end{proof}

\begin{lemma} \label{area2}
Let $\pi:P\to D_{0}$ be the projection. Then
$$B^+=\pi_*B+(B\cdot
D_{\infty})F.$$
 In particular,
  $$\omega_{D_0}(\pi_*(B^+))\leq \omega_P(B^+)$$ if $B^+\cdot D_{\infty}\geq
  0$,
  and the inequality is strict if $B^+\cdot D_{\infty}$ is positive.
\end{lemma}

\begin{proof}
$H_2(P;\mathbb Z)$ is generated by $H_2(D_{0};\mathbb Z)$ and $F$,
so we can write
$$B^+=B_{0}+mF$$ for some class $B_{0}$ of $D_{0}$.
Since $\pi_*B_{0}=B_{0}$ we have
$$\pi_*B^+=\pi_*B_{0}+0=B_{0}.$$
 On the other hand, since
$B_{0}\cdot D_{\infty}=0$, we have $m=B^+\cdot D_{\infty}\geq 0$.

Since $\omega_P(F)>0$ and $ \omega_{D_0}=\omega_D$, if $m=B^+\cdot
D_{\infty}\geq 0$, then
 $$\omega_P(B^+)=\omega_P(\pi_*B^+)+\omega_P(mF)\geq
\omega_P(\pi_*B^+)=\omega_{D_{0}}(\pi_*B^+)=\omega_D(\pi_*B^+).$$
\end{proof}

\begin{lemma}\label{type2} Type (ii) $P-$graph invariants vanish.
Hence there are no contributing type (ii) $\Gamma^-(\emptyset|\eta)$
graphs in (\ref{lower2}).
\end{lemma}
\begin{proof} Observe that in this case the class $B^+$ is not a fiber
class. And by Lemma \ref{area2} and (\ref{area3}) we have
$$\omega_D(\pi_*(B^+))\leq \omega_P(B^+)< V.$$
The conclusion then follows from part 1 of Theorem \ref{v}.
\end{proof}

Now it follows Lemmas \ref{D}, \ref{area2}, \ref{type2} that the
contributing $\Gamma^-(\emptyset|\eta)$ graphs in (\ref{lower2}) are
still restricted.
\end{proof}

{\bf  $\Gamma^+$ is empty}. In this case $\Gamma^-$ is simply the
given graph $\Gamma(\emptyset|\mu)$.

{\bf  $\Gamma^-$ is empty}. In this case we have

\begin{lemma} \label{specialgraph} Suppose the given graph $\Gamma(\emptyset|\mu)$ is admissible.
Either  the type (i) $P-$graph $\Gamma^+(\tilde \mu|\emptyset)$ is
not allowed or its invariant vanishes. Hence the empty
$\Gamma^-$graph is not a contributing graph in (\ref{lower2}).
\end{lemma}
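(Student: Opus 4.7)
The plan is to apply the vanishing results of Theorem \ref{v} to the $(P, D_\infty)$-relative invariant produced by the type (i) term in the degeneration formula (\ref{lower2}). First I would pin down the homology class $B^+$ carried by the $P$-side curve. Since the companion graph $\Gamma^-$ is empty, the whole class lies on $P$; and since $\breve\eta = \emptyset$, no relative marked point meets $D_\infty$, forcing $B^+ \cdot D_\infty = 0$. By Lemma \ref{area2} this means $B^+ = \pi_* B^+$ is a section class, which I identify with a class $B_0 \in H_2(D_0; \mathbb Z) = H_2(D;\mathbb Z)$. Under the collapsing map $p$, such a section class is identified with $\iota_* B_0 \in H_2(X)$, and matching $p_*(B^+ + B^-) = p_* A$ with $B^- = 0$ gives $\iota_* B_0 = A$. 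If $A$ fails to lie in $\iota_* H_2(D;\mathbb Z)$, no such $B^+$ exists and the type (i) graph is simply not allowed; this disposes of the first alternative in the lemma.

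In the remaining case $A = \iota_* B_0$, I would use the cohomology relation (\ref{cohomology relation}) to compute $\omega(A) = \omega_P(B^+) = \omega_{D_0}(B_0) = \omega_D(B_0)$, the last equality because $D_0$ inherits the symplectic form of the original divisor. Since $\Gamma(\emptyset|\mu) \in I_{D-pt,0}$, we have $\omega(A) \leq V$, hence $\omega_D(B_0) \leq V$. If $\omega_D(B_0) < V$, Theorem \ref{v}(i) applies directly to the $(P, D_\infty)$-invariant, whose first insertion $\tau_{i_1}([pt][D_0])$ is a point descendant in $P$, and yields the vanishing. If instead $\omega_D(B_0) = V$, I would invoke Theorem \ref{v}(ii), which requires the inequality $m = \sum_t(i_t+1) \leq D_0 \cdot B^+$ together with $\mu$ being sub- or admissible on the $(P, D_\infty)$ side. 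The latter is automatic, since $\mu = \emptyset$ and $B^+ \cdot D_\infty = 0$. For the numerical inequality, admissibility of $\Gamma(\emptyset|\mu)$ gives $m = \sum_t \mu_t = D \cdot A = c_1(N_D)(B_0)$, while the identification $N_{D_0/P} \cong N_D$ gives $D_0 \cdot B^+ = D_0 \cdot B_0 = c_1(N_D)(B_0)$. Hence $m = D_0 \cdot B^+$, with equality, and Theorem \ref{v}(ii) supplies the required vanishing.

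The hard part will be the borderline equality case $\omega_D(B_0) = V$, where neither part of Theorem \ref{v} applies by a mere strict-inequality bound on the symplectic area. The argument rests on the precise numerical match $m = D \cdot A = D_0 \cdot B^+$, which in turn uses both the isomorphism $N_{D_0/P} \cong N_D$ and the admissibility relation $\sum_t \mu_t = D\cdot A$ of the original graph. Once this match is verified, Theorem \ref{v}(ii) kills the contribution from every section-class lift $B_0$ of $A$ simultaneously, so all terms of the type (i) $P$-invariant vanish and the empty $\Gamma^-$ graph fails to contribute in (\ref{lower2}), completing the proof.
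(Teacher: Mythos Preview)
Your proposal is correct and follows essentially the same approach as the paper: split into the case where $A\notin\operatorname{im}\iota_*$ (type (i) graph not allowed) and the case $A=\iota_* B_0$, then invoke Theorem~\ref{v} according to whether $\omega_D(B_0)<V$ or $\omega_D(B_0)=V$. Your write-up is actually more thorough than the paper's own proof, which simply asserts ``in either case the vanishing is given by Theorem~\ref{v}'' without spelling out why the hypothesis $m\le D_0\cdot B^+$ of part~(ii) is met; your computation $m=\sum_t\mu_t=D\cdot A=c_1(N_D)(B_0)=D_0\cdot B^+$ makes this explicit.
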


\begin{proof} The type (i) $P-$graph $\Gamma(\emptyset|\mu)$
does not appear if the class $B$ is not in the image $\iota_*$.

Suppose $B=B^+$ is in the image of $\iota$ and we denote the class
of $D$ still by $B$.
 By our assumption,
 either $\omega(B)=\omega_D(B)<V$ or $\omega(B)=\omega_D(B)=V$.
 Since $B^+$ is not a fiber class,
in either case the vanishing is given by Theorem \ref{v}.
\end{proof}

\noindent{\bf Case II}. Now  consider the case that
$\Gamma(\emptyset|\mu)$ is of the form $\Gamma_0$ satisfying
(\ref{condition1}).

We again apply the degeneration formula to the corresponding
invariant of $X$ distributing all the insertions to the $\mathbb
P^1-$bundle side. Then as argued in Theorem \ref{generalization} the
leading term in (\ref{lower2}) is the special graph.

We only need to  show that the remaining $\Gamma^--$graphs in
(\ref{lower2}) are restricted.

 By Lemma \ref{specialgraph} the type (i) $\Gamma^--$graph does not appear.
 Since $A$ is a minimal uniruled class of $D$, by Theorem \ref{v}, type (ii) $\Gamma^--$graphs do not
appear either. In particular, $\breve \eta$ is constrained by (iii).
For the remaining  $\Gamma^--$graphs, the corresponding $P-$graphs
are of type (iii),  hence  they are restricted.

Thus we have completed the proof of Theorem \ref{minimal}.
\begin{remark}
In fact we have shown there is a sub-correspondence for the admissible graphs in $I_{D-pt, 0}$.
\end{remark}

\section{Relative Gromov-Witten invariants of $(P_D, D_{\infty})$}

     Relative Gromov-Witten invariants of  $\mathbb P^1$-bundle have been studied
     in \cite{MP} and \cite{Ga}.
     We will use
    both their techniques and results extensively.
 In [MP] it is shown using virtual localization that  relative invariants of
a projective $\mathbb P^1-$bundle are determined by absolute
invariants of the base projective manifold.
 We need to apply the symplectic relative virtual localization theorem of
 Chen-Li \cite{CL} in our more general setting.
However, for $\mathbb P^1$-bundle, the theorem is the same as the
corresponding algebro-geometric case in \cite{GV}. We conveniently
use the notations from the algebro-geometric case. Compared to
\cite{MP}, a new ingredient of our case is the point insertion for
which we have to keep track of it at each induction step.

    Let $L$ be a complex line bundle over $D$ and $P_D=\mathbb P(L\oplus \mathbb C)$.
    Then $\pi: P_D\rightarrow D$  is a $\mathbb P^1$-bundle with zero section $D_0$ and
    infinity section $D_{\infty}$. Clearly, $D_0, D_{\infty}$ are isomorphic to $D$.
    In this section, we calculate certain relative Gromov-Witten
    invariants of $P_D$ relative to the infinity section
    $D_{\infty}$. First of all, there are two kinds of cohomology
    classes: $\pi^*\alpha$ and $\beta[D_0]$. As previously remarked, the class $\beta[D_0]$, which is the cup product of the pull-back
    of $\beta$ with the Poincar\'e dual of $D_0$, corresponds to the
    Poincar\'e dual of $\beta$ viewed as a homology cycle of
    $D_0$.

    Let $A\in H_2(D_{\infty};\mathbb Z)$. We view it as a homology class
    of $P_D$.
    We shall consider relative invariants
    of the form
    \begin{equation}\label{relative}
    <\varpi, \tau_{i_1}([pt][D_0]), \tau_{i_2}(\beta_2[D_0]), \cdots,
    \tau_{i_k}(\beta_k[D_0])|\mu>^{P_D,
    D_{\infty}}_{0,A},
    \end{equation}
    where $\varpi$ consists of insertions of the form $\pi^*(\alpha_1), \cdots,
    \pi^*(\alpha_l)$.

    To evaluate such an invariant we will need to study other types
    of relative invariants, twisted rubber invariants, as well as twisted invariants of the divisor.
We will write $P$ for $P_D$.

\subsection{Twisted invariants of $D$}\label{twist-D}
There is an important twisted Gromov-Witten theory treated by Farber
and Pandharipande in \cite{FP} and Coates and Givental \cite{CG}.

    Suppose that $f: \Sigma\rightarrow D_{0}$ is a stable map.
    For the  line bundle $L\to D_{0}$ we can define a virtual bundle
\begin{equation}\label{virtual bundle}H^1(f^*L)-H^0(f^*L)
\end{equation}
     over the
    moduli space $\overline{\M}^{D_0}_{0,k+l}(A)$ of stable maps along with its Euler class $e$. For
    example, we use the $S^1$-action on the fiber to define
     the  equivariant Euler class of (\ref{virtual bundle}), still denoted by $e$,
 and then take non-equivariant limit.

Associated to the relative invariant (\ref{relative}) we have the
Gromov-Witten
     invariant of $D_{0}=D$ twisted by $e$   defined as
    \begin{equation}
    \begin{array}{ll}
    &<\iota^*\varpi, \tau_{i_1}([pt]), \tau_{i_2}(\beta_2), \cdots,
    \tau_{i_k}(\beta_k)>^{
    D,
    e}_{0,A}\cr
    =&\int_{[\overline{\M}^{D}_{0,k+l}(A)]^{vir}}\prod_s
    \iota^*\alpha_s  [pt]\psi_1^{i_1}\prod_{t=2}\beta_t\psi_t^{i_t} \wedge e.
    \end{array}
    \end{equation}
    The above twisted invariant has been studied in  \cite{FP} and \cite{CG}.
    Their idea is  to  mark an additional point.
We summarize in the following form.

     \begin{prop}\label{diagonal proposition}
    A  twisted invariant can be expressed in terms of a  similar invariant replacing $e$ by a descendent at the
    additional marked point together with some products of ordinary invariants.
     The total curve class of each product of invariants is still equal to $A$,
     each insertion of the original invariant appears as an
     insertion of a factor invariant,
     and the factor invariants are
     linked by dual insertions from the diagonal class  .
     \end{prop}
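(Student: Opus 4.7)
The plan is to follow the additional-marked-point technique of Faber--Pandharipande \cite{FP} and Coates--Givental \cite{CG}. The idea is to compare the virtual bundle $\E = H^1(f^*L) - H^0(f^*L)$ on $\overline{\M}^{D}_{0,k+l}(A)$ with its analogue $\E'$ on the moduli space carrying one additional marked point, and to absorb the twist into a descendent at that new point.

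First I would introduce the forgetful map $\phi : \overline{\M}^{D}_{0,k+l+1}(A) \to \overline{\M}^{D}_{0,k+l}(A)$ (together with the usual stabilization) and the evaluation $ev_\star$ at the new marked point. A Grothendieck--Riemann--Roch computation applied to the universal curve and its pullback along $\phi$ yields a standard comparison
$$\phi^*\E \;=\; \E' - ev_\star^* L + (\text{boundary corrections})$$
in $K$-theory. Passing to the equivariant Euler class and then to the non-equivariant limit turns this into a multiplicative identity among characteristic classes on $\overline{\M}^{D}_{0,k+l+1}(A)$, of the form $\phi^*e(\E)=e(\E')\cdot e(ev_\star^*L)^{-1}\cdot(\text{boundary})$ after the equivariant inversion.

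Next I would use this identity to rewrite the twisted integrand on $\overline{\M}^{D}_{0,k+l}(A)$ via $\phi^*$. The factor $ev_\star^* c_1(L)$ together with a $\psi$-class at the new marked point (arising from the comparison of relative dualizing sheaves along $\phi$) constitutes a descendent insertion of $c_1(L)$ at the additional point; the projection formula then re-expresses the original twisted integral as a similar invariant carrying this new descendent insertion, modulo the boundary corrections. Each boundary correction is supported on a divisor of $\overline{\M}^{D}_{0,k+l+1}(A)$ parametrizing stable maps with a separating node, so the class splits as $A_1+A_2=A$ and the marked points partition between the two components. Applying the splitting axiom to such a stratum yields a product of two ordinary (untwisted) Gromov--Witten invariants of $D$ whose factors carry the original insertions distributed between them, with the node contributing a pair of dual diagonal insertions and with total class equal to $A$.

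The main obstacle is establishing the correct comparison identity for $\phi^*\E$ along with the precise form of its boundary-correction term, which is the nontrivial Grothendieck--Riemann--Roch input on the universal curve, and then keeping track of the combinatorial bookkeeping so that (i) every summand of products has total class $A$, (ii) each original insertion appears in exactly one factor, and (iii) the $\psi$-exponents are handled correctly under $\phi$. The arguments of \cite{FP} and \cite{CG} supply complete templates for both the GRR step and the pushforward of $\psi$-classes; the only mildly new feature here is the point insertion $\tau_{i_1}([pt])$, but as a pure cohomology class on $D$ it is transparent under pullback and pushforward and contributes no additional complication.
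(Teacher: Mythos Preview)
Your approach matches what the paper does: the proposition is not proved in the text but summarized from \cite{FP} and \cite{CG}, with the sole remark that the product terms arise as boundary contributions from nodal curves via diagonal splitting---precisely your strategy of marking an additional point and invoking GRR. One small correction: the identity $\phi^*\E = \E' - ev_\star^*L + (\text{boundary})$ is not the right formulation, since $R^\bullet\pi_*f^*L$ pulls back identically along the forgetful map; what GRR actually supplies is a formula for $ch(\E)$ as a pushforward from the universal curve $\overline{\M}^{D}_{0,k+l+1}(A)$ involving $ev_\star^*ch(L)$, $\psi$-classes from the Todd class of the relative tangent, and node corrections, and it is this that converts the twist into a descendent at the new point plus boundary products.
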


These products have the stated properties because they
     correspond to
     boundary contributions from nodal curves and are obtained via diagonal
     splitting.

     In our situation, we also need to consider a slight variant of the above
     twisted invariants. Here, we consider the following complex
    \begin{equation}\label{complex} D_{f,L}: \{, v\in \Omega^0(f^*L), v(x_i)=0, i\leq
    k\}\rightarrow \Omega^{0,1}(f^*L).
    \end{equation}
    $\hbox{coker} D_{f,L}-\ker D_{f,L}$ defines a virtual bundle of rank
    \begin{equation}
    k-c_1(L)(A)-1.
    \end{equation}
     Let $e_L$ be its Euler class. Then, the Gromov-Witten
     invariant twisted by $L$ is     defined as
    \begin{equation}\label{g twist}
    \begin{array}{ll}
    &<\iota^*\varpi, \tau_{i_1}([pt]), \tau_{i_2}(\beta_2), \cdots,
    \tau_{i_k}(\beta_k)>^{
    D,
    L}_{0,A}\cr
    =&\int_{[\overline{\M}^D_{0,k+l}(A)]^{vir}}\prod_s
    \iota^*\alpha_s [pt]\psi_1^{i_1}\prod_{t=2}\beta_t\psi_t^{i_t} \wedge e_L.
    \end{array}
    \end{equation}
    We call it a {\em generalized $L$-twisted invariant}.
    There is a short exact sequence
    $$0\rightarrow \ker D_{f,L}\rightarrow H^0(f^*L)\rightarrow \oplus_x L_{f(x_i)}\rightarrow \hbox{coker} D_{f,L}
    \rightarrow H^1(f^*L)\rightarrow 0.$$
    As a virtual bundle,
\begin{equation} \label {exact sequence}\hbox{coker} D_{f,L}-\ker
    D_{f,L}=H^1(f^*L)-H^0(f^*L)-\oplus_i L_{f(x_i)}.
    \end{equation}
    Observe that we can use (\ref{exact sequence}) to express $e_L$ in terms of the Euler class  $e$ of
   the  virtual bundle (\ref{virtual bundle})
     and the insertion $c_1(L)$ .  It is then
not hard to see that  Proposition 4.1 applies to generalized
$L$-twisted invariants of the form (\ref{g twist})  as well.

\subsection{Type I/II relative invariants and twisted rubber invariants}

Let $Y$, $\mathcal L$ and $\mathcal R$ all denote the $\mathbb P^1$
bundle $\mathbb P(N_D\oplus \underline {\mathbb C})$.

 For any non-negative integer $m$, construct $\mathcal R_m$ by
gluing together $m$ copies of $\mathcal R$, where the infinity
section of the $i^{th}$ component is glued to the zero section of
the $(i+1)^{th}$ component for $1\leq i \leq m$. Denote the zero
section of the $i^{th}$ component by $D_{i,0}$, and the infinity
section by $D_{i, \infty}$, so  $Sing \mathcal R_m =
\cup_{i=1}^{m-1} D_{i, \infty}$.  Define $Y_m$ by gluing $ Y $ along
its infinity section denoted by $D_{0,\infty}$ to $\mathcal R_m$
along $D_{1,0}$. Thus $Sing Y_m = \cup_{i=0}^{m-1}D_{i, \infty}$.
$Y_0=Y$ will be referred to as the level $0$ component and the
$\mathcal R_i$ will be called the level $i$ component. We will also
sometimes denote $D_{m, \infty}$ by $D_\infty$ if there is no
confusion.

Let $\mbox{Aut}_D \mathcal R_m$ be the group of automorphisms of
$Q_m$ preserving each $D_{i, 0}, D_{i, \infty}, 1\leq i\leq m$, and
the morphism to $D_{1,0}$. And let $\mbox{Aut}_D Y_m$ be the group
of automorphisms of $Y_m$  with restriction to $\mathcal R_m$ being
contained in $\mbox{Aut}_D \mathcal R_m$. Clearly, $\mbox{Aut}_D Y_m
= \mbox{Aut}_D \mathcal R_m\cong ({\mathbb C}^*)^m$, where each
factor of $({\mathbb C}^*)^m$ dilates the fibers of the ${\mathbb
P}^1-$bundle $\mathcal R_i\longrightarrow D_{i,0}$. Denote by
$\pi[m] : Y_m\longrightarrow Y$ the map which is the identity on the
root component $Y_0$ and contracts all the bubble components to
$D_{1,0}$ via the  fiber bundle projections.

Similarly we can form $\mathcal L_n$ and glue it to the left of
$Y_m$. We denote the resulting chain of $\mathbb P^1-$bundle by
$_nY_m$. If $m=0$  we simply write $_nY$ for $_nY_0$. Of course
$Y_m$ is the same as $_0Y_m$. $\mathcal L_j$ is regarded as the
level $-j$ component. The automorphism group for $\mathcal L_n$ is
defined in the same way as for $Y_m$, and the extension to $_nY_m$
is obvious.

\subsubsection{Type I invariants and Type II invariants }
Given a relative insertion $\mu$ at $D_{\infty}$, the relative
moduli space for $(Y, D_{\infty})$ consists of the union over $m$ of
equivalence classes of marked relative stable maps into $(Y_m, D_{m,
\infty})$
 satisfying the relative constraint. It comes with a virtual
 fundamental homology class.

 There are also
natural cohomology classes associated to marked points:  the pull
back classes via the evaluation maps at the marked points, as well
as descendent classes. Thus we can integrate these classes over the
virtual class to define relative invariants, called type I
invariants in \cite{MP}.

In the same way we can define  relative invariants for $(Y, D_0)$ by
considering marked relative stable maps  into $(_nY, D_{-n, 0})$ for
various. These invariants are also called type I invariants.

Type II invariants are relative invariants for $Y$ relative to both
$D_0$ and $D_{\infty}$. They are defined via  moduli spaces of
equivalence classes of  marked stable maps into $(_nY_m, D_{-n, 0},
D_{m,\infty})$.

\subsubsection{Distinguished type II invariants and their orders}
A distinguished invariant of type II is an invariant of $(Y,  D_0
\cup D_{\infty})$ with a distinguished insertion of the form
$[D_{\infty}]\delta$ (Notice that our definition is different from
that in \cite{MP} where $[D_0]$ is in place of $[D_{\infty}]$).

Distinguished type II invariants are ordered in a similar way as in
Definition \ref{order}. The new features are  that parts  (3) and
(4) of that order are replaced successively  by the number of
non-distinguished insertions, the total degree of $D_0-$relative
insertions, the total degree of $D_{\infty}-$relative insertions,
the degree of the distinguished insertion.

\subsubsection{Non-rigid targets and rubber invariants with $\Psi$ insertions}
\label{twisted rubber}

Let we denote by ${_nY_m}^{\sim}$ the collapsing of  $Y_0$ in
$_nY_m$. ${_nY_m}^{\sim}$ is called a non-rigid target, due to the
$\mathbb C^*$ action at each level.

Given relative insertions $\mu$ and $\nu$ on the two sides $D_{-n,
0}$ and $D_{m,\infty}$, there is the rubber moduli space
$\overline{\M}^P_{0,l}(B;\mu|\nu)^{\sim}$ consisting of the union
over $m, n$ of equivalence classes of stable pseudo-holomorphic maps
from $l-$marked genus 0 curve into ${_nY_m}^{\sim} $  with class
$B$. Notice that here the stability is the rubber stability and the
equivalence is the rubber equivalence.

As for the ordinary moduli space, a rubber moduli space  also comes
with the virtual fundamental (homology) class. Evaluation classes
and descendent classes are invariant under enlarged rubber
automorphisms and hence define classes on the rubber moduli space.
Thus we can integrate these classes over the virtual class to define
invariants, which are called rubber invariants in \cite{MP}.

In fact,  there are two additional  degree 2 cohomology classes on
the rubber moduli space coming from two classes on the relative
moduli space, $\Psi_0$ and $\Psi_0$. They are the cotangent classes
at $D_0$ and $D_{\infty}$ respectively. We give the description of
$\Psi_0$ here following \cite{Ga}, the construction for
$\Psi_{\infty}$ is the same. Given a relative map $f$ with a marked
point $x_i$ mapping to $D_{0}$, if the multiplicity of $f$ at $x_i$
is $\alpha_i$, then define
\begin{equation}\label{Psi}
\Psi_0=\alpha_i\psi_i+ev_{x_i}^*c_1(N_{D_{0}|Y})=\alpha_i\psi_i+ev_{x_i}^*c_1(L).
\end{equation}
It is independent of the choice of $x_i$ as along as it is mapped to
$D_{\infty}$.

 As  $\Psi_0$ and $\Psi_{\infty}$ are invariant under enlarged rubber
automorphisms they descend to degree 2 classes, still called
$\Psi_0$ and $\Psi_0$, on rubber moduli spaces. An important new
feature is that these classes are not generated by evaluation
classes. We call rubber invariants with $\Psi$ insertions {\it
twisted} rubber invariants. We will see very soon that twisted
rubber invariants appear in the relative virtual localization
formula. They are generally very difficult to evaluate  explicitly.

We can certainly define  type II invariants with $\Psi$ insertions.
 We can also define type I invariants for $(Y, D_{\infty})$ with
$\Psi_0$ insertions, and type I invariants for $(Y, D_{0})$ with
$\Psi_{0}$ insertions. However, we will have no use for these more
general invariants. Thus we reserve the name a type I or type II
invariant only for one with no $\Psi$ insertions.

\subsection{Relative virtual localization on $\mathbb P^1$ bundle}
In this subsection we follow p. 135-8 in \cite{Ga}.

We switch back to the notation $P$, i.e. $P=Y$ and $_nP_m=_nY_m$.
     $P$ carries a
    natural $S^1$-action by rotating the fibers. The fixed point
    loci
    are precisely $D_0$ and $ D_{\infty}$. It induces a natural action on
        the moduli space of relative stable maps
    $\overline{\M}^{P, D_{\infty}}_{0, k+l}(A)$.

\subsubsection{Fixed point components labeled by bipartite graphs}

    The set of connected components of fixed point locus  is indexed by bi-partite
    graphs.
    Each vertex corresponds  to  either the ordinary (connected) stable maps into $D_0$ or
    stable maps into rubber over $D_{\infty}$. The first type of
    vertex is called an ordinary vertex, and the second type is
    called a rubber vertex.

    Each edge corresponds to
    Galois covers of fibers of $P$  totally ramified over $D_{0,0}$
    and $D_{0, \infty}$ with no marked points away from $D_{0,0}$
    and $D_{0, \infty}$.

The connection data of the Galois covers is described by a sum over
cohomology weighted partitions specifying the rubber relative
conditions on the connecting divisor.

    Given each bipartite graph, each component $F$ is then a finite quotient of a
    fiber product  $M_0\boxtimes \mathcal M_1$ by a finite group $G$. Here

\begin{itemize}
    \item $M_0$ is the product over vertices on $D_0$
    of ordinary moduli spaces $\overline{\M}^{D}_{0,l}(B)$ to $D_0$,
    adding the nodes $y_i$ where the fibers are attached;

    \item $\mathcal M_1$ is the product  of the rubber moduli space
    $\overline{\M}^P_{0,l}(B;\mu|\nu)^{\cong}$
     over vertex on $D_{\infty}$, adding the nodes $y_i$ where the fibers are attached;

\item $\boxtimes$ denotes a fiber product over evaluation maps to $D_{0,\infty}=D_{1,0}$
at the gluing points $y_i$ (in the intersection of levels $0$ and
$1$),

\item $G$ is the group of permutations of the points $y_i$ that
preserves the multiplicities.

\end{itemize}

The virtual fundamental class of $F$ is the one induced by this
product structure.

\subsubsection{Equivariant Euler class}
 For each component
$F\subset \overline{\M}^{P, D_{\infty}}_{0, k+1}(A)$, the
equivariant Euler class of the virtual normal bundle
$N^{virt}_{F/\overline{\M}^{P, D_{\infty}}_{0, k+1}(A)}$ is also a
fiber product,
$$e(N^{virt}_{F/\overline{\M}^{P, D_{\infty}}_{0, k+1}(A)})=
\frac{1}{G}(\frac{[M_0]^{virt}}{e_0(N^{virt}_{F/\overline{\M}^{P,
D_{\infty}}_{0, k+1}(A)})} \boxtimes \frac{[\mathcal
M_1]^{virt}}{e_1(N^{virt}_{F/\overline{\M}^{P, D_{\infty}}_{0,
k+1}(A)})}).
$$

$e_0$ is a product with one factor being the equivariant Euler class
of the virtual bundle
\begin{equation}
H^0f^*N_{D_0/P})- H^1(f^*N_{D_0/P})=H^0(f^*L)-H^1(f^*L)
\end{equation}
 on
$M_0$. This is why we need to consider the twisted invariants as in
\ref{twist-D}.

 There are two other types of factors of $e_0$. Denote the generator of
$H^*_{\mathbb C^*}(pt)$ by $t$. Every node $y_i$ in $D_{0,0}$
connecting to a multiple cover of degree $m$ contributes a product
$$
(\frac{t+ev_{y_i}^*c_1(L)}{m}-\psi_{y_i})\prod_{k=1}^{m-1}\frac{k(t+ev_{y_i}^*c_1(L))}{m},
.$$ Here the first term corresponds to part (iii) on p. 137 in
\cite{Ga}, and the second term corresponds to (ii)(b) there.


Concerning $e_1$, each marked point $x_i$ in $D_0$ on a multiple
cover of degree $m$ contributes to $e_1$ a product
$$
(-t-\Psi_0)\prod_{k=1}^m\frac{k(t+ev_{x_i}^*c_1(L))}{m}.
$$
Here the first term corresponds to part (iv) on p. 137 in \cite{Ga},
and the second term corresponds to (ii)(a) there. This is why we
need to consider twisted rubber invariants as in \ref{twisted
rubber}.

\subsubsection{Virtual localization formula}
The virtual localization formula equates
$$[\overline{\M}^{P, D_{\infty}}_{0,
k+1}(A)]^{virt}=\sum_F\frac{[F]^{virt}}
{e(N^{virt}_{F/\overline{\M}^{P, D_{\infty}}_{0, k+1}(A)})}$$ in the
equivariant cohomology of $\overline{\M}^{P, D_{\infty}}_{0,
k+1}(A)$.

Thus, as mentioned,  to apply the virtual relative localization we
need to evaluate an ordinary twisted invariants over $M_0$ and
certain twisted rubber invariants. As mentioned in 4.1, ordinary
twisted invariants have been  treated in \cite{FP}.  As for twisted
rubber invariants, a nice algorithm, the rubber calculus, has been
developed in \cite{MP}.

\subsection{Rubber calculus}

In this subsection we review the rubber calculus  and in addition,
we keep track how the curve class behaves.

\subsubsection{The first reduction--removing $\Psi_0$} The
first step is to remove $\Psi$ insertions from  rubber invariants,
i.e. express twisted rubber invariants in terms of (ordinary) rubber
invariants.

It involves the dilaton equation, the divisor equation and the
topological recursion relation. We only describe them in the form
needed.

{\bf Dilation equation}: if $c=2g-2+n+l(\mu)+l(\nu)\ne 0$ for a
rubber invariant, then it  is $c^{-1}\times$   the rubber invariant
with an extra absolute insertion $\tau_1(1)$. The curve class is
again preserved.

{\bf Divisor equation}: if a divisor is nonzero on the curve class,
a $\Psi_0^k-$rubber invariant is the sum of the rubber invariant
with the divisor added as  an absolute insertion,  rubber invariants
with smaller descendent powers, and $\Psi_0^{k-1}-$rubber
invariants. The curve classes are still preserved.

 Finally we describe the application the topological recursion to a
 rubber invariant with a $\Psi_0$ insertion and {\it at least one absolute insertion}. The starting
point is that, by (\ref{Psi}),  $\Psi_0$ over the rubber moduli
space can be expressed as $ev^*_p(c_1(L))+\alpha^*\psi_3$, where $p$
is the marked point carrying one absolute  insertion. $\alpha$ is
the canonical map to the Artin stack of $3-$pointed genus 0 curves,
sending a rubber map $f$ to the fiber $C_f$ containing $p$ with
marked points $D_0\cap C_f, f(p), D_0\cap C_f$.

{\bf Topological recursion}: $\psi_3$ is expressed as a sum of
boundary divisor, hence a  $\Psi_0^k-$rubber invariant with at least
one absolute insertion is the sum of a $\Psi_0^{k-1}-$invariant, and
 $\Psi_0^{k-1}-$rubber invariants multiplied by  a  rubber invariant
 with no $\Psi_0$ insertion and at least one absolute insertion. The curve classes are
 possibly smaller.

\begin{remark} To be able to apply  topological
recursion, we need at least one absolute insertion. This can be
achieved either by the dilaton equation or the divisor equation.
\end {remark}

For a fiber class $\Psi_0^k-$rubber invariant, the target stability
insures that the relevant $c$ in the dilaton equation is nonzero,
hence   it can be turned into a $\Psi_0^k-$rubber invariant with an
extra dilaton insertion, and hence at least one absolute insertion.
 Apply the topological recursion to reduce the
dilaton rubber invariant to rubber invariants  with fewer $\Psi_0$
insertions. Repeating the cycle yields rubber invariants without
$\Psi_0$ insertions.

For a non-fiber class $\Psi_0^k-$rubber invariant,  first add a
divisor insertion $\pi^*\omega_D$. As $\pi^*\omega_D$ pairs
positively with a non-fiber class, the divisor equation can be used
to express the original invariant in terms of the new invariant
which has the divisor insertion and hence at least one absolute
insertion, together with either $\Psi_0^{k-1}-$rubber invariants or
$\Psi_0^k-$rubber invariants with smaller descendent powers.

Notice that invariants with a negative power descendent insertion is
automatically zero. Thus we can  employ the topological recursion
repeatedly until there are no $\Psi_0$ insertions.

 \subsubsection{The second reduction--Rubber to type II}

This is achieved through the rigidification process.
  A rubber invariant with {\it at least one
absolute insertion} is turned into a type II invariant with one
absolute insertion replaced by its product with the  divisor
$D_{\infty}$. The curve class is preserved.

A fiber class rubber integral with no $\Psi_0$ insertions is turned
into  type II invariants through rigidification after adding a
dilaton insertion.

While a non-fiber class rubber integral with no $\Psi_0$ insertions
is expressed in terms of type II invariants through rigidification
after adding a divisor insertion of the form $\pi^*\omega_D$.

In summary, the following is proved in \cite{MP}.

\begin{prop} \label{rubber}Any $\Psi^k_{\infty}-$rubber invariant can be expressed
in terms of  type II invariants with the same class, or products of
type II invariants. The total curve class of each product of
invariants is still equal to $A$, each insertion of the original
invariant appears as an
     insertion of a factor invariant, and the adjacent factor invariants
are linked by dual insertions.

Moreover, if the curve class is non-fiber and the domain is
connected, the type II invariants with the same curve class are
distinguished and their relative insertions are no bigger than those
of the rubber invariant, the number of non-distinguished insertions
is non bigger than the number of absolute insertion of the rubber
invariant.

\end{prop}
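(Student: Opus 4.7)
The plan is to apply the rubber calculus of \cite{MP}, specialized from $\Psi_0$ to $\Psi_\infty$ by the obvious symmetry, in two stages: first remove all $\Psi_\infty$ insertions, then rigidify the resulting (ordinary) rubber invariants into type II invariants. Throughout, the induction is on the pair $(k, \text{complexity of the target})$ with the complexity ordered so that each application of the dilaton/divisor/topological recursion and of rigidification strictly decreases it.

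\textbf{Stage 1 (removing $\Psi_\infty^k$).} Fix a $\Psi_\infty^k$-rubber invariant $I$ with curve class $A$. If $A$ is a fiber class then rubber target stability forces the factor $c = 2g-2+n+l(\mu)+l(\nu)$ to be nonzero, so the dilaton equation replaces $I$ by $c^{-1}$ times a $\Psi_\infty^k$-rubber invariant that now has an extra absolute dilaton insertion. If $A$ is non-fiber, we instead use the divisor equation with the insertion $\pi^*\omega_D$, which pairs positively against $A$; this rewrites $I$ (up to the same nonzero factor) as a $\Psi_\infty^k$-rubber invariant with a $\pi^*\omega_D$ absolute insertion, plus terms with strictly smaller descendent power on $\Psi_\infty$ (absorbed into the induction). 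In either case we may now assume the rubber invariant carries at least one absolute insertion. Apply the topological recursion to $\Psi_\infty$ at that marked point: using $\Psi_\infty = ev_p^*c_1(L^\vee) + \alpha^*\psi_3$ and the expression of $\psi_3$ on the Artin stack of three-pointed genus zero curves as a sum of boundary divisors, we express $I$ as a sum of a $\Psi_\infty^{k-1}$-rubber invariant (same class, same insertions up to the $\Psi$ power) plus products of a $\Psi_\infty^{k-1}$-rubber invariant with a rubber invariant having no $\Psi_\infty$ insertion and at least one absolute insertion, the two factors linked by dual insertions coming from the splitting. All curve classes sum to $A$, and each original insertion is carried into one of the factors. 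By induction on $k$ and on the ordering of the moduli problem, $I$ is expressed in terms of ordinary ($k=0$) rubber invariants or products thereof with the required properties.

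\textbf{Stage 2 (rigidification).} For each ordinary rubber invariant produced in Stage 1, if it carries at least one absolute insertion $\gamma$, rigidify by replacing $\gamma$ by $\gamma \cdot [D_\infty]$; the result is a type II invariant of the same curve class. If the invariant is a fiber-class rubber invariant with no absolute insertion, target stability again lets us apply the dilaton equation to introduce a $\tau_1(1)$ before rigidifying; if it is a non-fiber rubber invariant with no absolute insertion, apply the divisor equation with $\pi^*\omega_D$ first. The end product is a sum of type II invariants, or products of type II invariants, with curve classes summing to $A$ and insertions in bijective correspondence with those of $I$ (linked in each product by dual insertions on the gluing divisor).

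\textbf{The distinguished/bound claim.} Suppose $A$ is non-fiber and the domain is connected. The distinguished insertion arises precisely from the rigidification step: the rigidified absolute insertion acquires a factor of $[D_\infty]$, making the resulting type II invariant distinguished in the sense of \cite{MP}. A connected non-fiber rubber invariant cannot split off a same-class factor (the complement would be a class that pairs trivially with $\pi^*\omega_D$ yet contributes), so at each application of topological recursion in Stage 1 the same-class term inherits the point/absolute constraints of $I$, and the splitting terms strictly drop curve class. Consequently, the type II invariants with the full class $A$ that survive in the final expression are exactly those obtained by a chain of same-class operations followed by rigidification, and for these neither divisor nor dilaton equations increase the relative weights, while rigidification only converts an absolute insertion into the distinguished one. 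This gives the two bounds: relative insertions are no bigger than those of $I$, and the number of non-distinguished absolute insertions is at most the number of absolute insertions of $I$.

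\textbf{Main obstacle.} The delicate point is bookkeeping in Stage 1: the preparatory dilaton or divisor step temporarily \emph{adds} an absolute insertion before topological recursion is legal, and one must verify that this added insertion is eventually absorbed (either cancelled by the $c^{-1}$ factor and the dilaton equation in reverse after rigidification, or tracked as a harmless $\pi^*\omega_D$ that does not affect the ``no bigger than'' count on relative insertions). Equivalently, one has to check that the combined sequence dilaton/divisor $\to$ topological recursion $\to$ rigidification $\to$ (inverse) dilaton/divisor preserves the inequalities in the ``moreover'' clause. Once this accounting is done carefully, exactly as in \cite{MP} and \cite{Ga}, the proposition follows.
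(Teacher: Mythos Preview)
Your proposal is correct and follows essentially the same approach as the paper. The paper does not give a self-contained proof of this proposition; instead it outlines the two reductions of the rubber calculus from \cite{MP} in the preceding subsections (removing $\Psi$ via dilaton/divisor/topological recursion, then rigidification to type II) and states the proposition as a summary, which is exactly your Stage~1/Stage~2 structure, and your treatment of the ``moreover'' clause and the bookkeeping obstacle is more explicit than what the paper records.
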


\subsection{The reduction algorithm----relative to divisor}

Let us now finally describe the algorithm in \cite{MP} to determine
relative invariants of $(P, D_{\infty})$ from invariants of $D$.
This step involves both  virtual localization and degeneration.

For a fiber class, the moduli space of relative stable maps fibers
over $D$ with fiber isomorphic to the moduli space of stable maps to
$\mathbb P^1$ relative to $\infty$. Thus a fiber class
 invariant can be expressed in
terms of the classical cohomology of $D$ and equivariant GW
invariant of $(\mathbb P^1,\infty)$.

\subsubsection{Three
relations and the final reduction}

Now consider a class $A$ with $\pi_*(A)\ne 0$, i.e. a non-fiber
class.

 By first applying localization
 a type I relative invariant of $(P, D_{\infty})$
can be expressed in terms of twisted rubber invariants (with
$\Psi_0$ insertions) and twisted integrals in the GW theory of $D$.

Then Proposition \ref{rubber}
  expresses the involved twisted rubber invariants  in terms of
distinguished type II invariants. And Proposition \ref{diagonal
proposition} expresses the involved twisted integrals in the GW
theory of $D$ in terms of ordinary invariants of $D$.

 Thus it suffices to show every distinguished type II
invariants can be expressed in terms of invariants of $D$. This is
achieved through  the following 3 relations.

For relative conditions
$$\mu=\{(\mu_i, \delta_{r_i})\},\nu=\{(\nu_j, \delta_{s_j})\},$$
consider a $(Y, D_{\infty})$ type I invariant of the form
\begin{equation}\label{typeI}
<\prod_i\tau_{\mu_i-1}([D_0]\delta_{r_i})\omega\tau_0([D_\infty]\delta)|\nu>^{P,
D_{\infty}}_A,
\end{equation}
and the distinguished type II invariant
\begin{equation}\label{typeII}
<\mu|\omega\tau_0([D_\infty]\delta)|\nu>_A.
\end{equation}

Relation 1, proved by the degeneration formula for (\ref{typeI})
relative to $D_0$, expresses the distinguished type II invariant
(\ref{typeII}) in terms of the type I invariant (\ref{typeI}),
undistinguished type I invariants of $(P, D_{\infty})$ with class
$A$, type I invariants of $(P, D_{\infty})$ with smaller curve
classes, and distinguished type II invariants lower than
(\ref{typeII}).

Relation 2, proved again by virtual localization, Proposition
\ref{diagonal proposition} and Proposition \ref{rubber}, expresses
the type I invariant (\ref{typeI}) in terms of distinguished type II
invariants with class A but strictly lower than (\ref{typeII}), type
II invariants with class smaller than $A$ and  invariants of $D$. We
briefly review the argument.

In the virtual localization formula, there are contributions from
$M_0$ and $\mathcal M_1$ respectively.
 The contribution from $M_0$
is a twisted integral in the GW theory of $D$, which can be reduced
to ordinary GW invariants of $D$ by Proposition \ref{diagonal
proposition}.

 Thus the  principal terms of the localization formula
come from fixed loci with constant $D_0$ vertices.   Apply
Proposition \ref{rubber} to the rubber invariants, which have  the
curve class $A$ and  a $\Psi_{0}$ insertion, we obtain distinguished
type II invariants with curve class $A$, together with products of
type II invariants with the same total  curve class and including
each original insertion.

Similarly, Relation 2' expresses undistinguished type I invariants
with class $A$ in Relation I  in terms of distinguished type II
invariants with class $A$ which are lower than (\ref{typeII}),
products of type II invariants with the same total  curve class, and
products of GW invariants of $D$ the same total  curve class.

 By  Relations 1, 2, and 2',
a non-fiber class distinguished type II invariant can be inductively
computed from products of lower order distinguished type II
invariants together with invariants of $D$ with the total curve
class. Here the primary induction is on the pair $(g, A)$, and the
secondary induction is on the ordering.

In summary, the following is proved in \cite{MP}.

\begin{prop} \label{typeI/II}Any type I/II invariant can be expressed
in terms of   invariants of $D$ with the same class, or products of
invariants of $D$. The total curve class of each product of
invariants is still equal to $A$, each insertion of the original
invariant appears as an
     insertion of a factor invariant, and the adjacent factor invariants
are linked by dual insertions.

\end{prop}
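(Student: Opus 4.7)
The plan is a double induction: the primary induction is on the pair $(g,A)$ ordered by the symplectic area $\omega(A)$, and the secondary induction, with $(g,A)$ fixed, is on the partial ordering of distinguished type II invariants recalled above. The base of the primary induction handles the fiber class case: there the relative moduli space fibers over $D$ with fiber the moduli space of stable maps to $(\mathbb{P}^1,\infty)$, so after pushing forward, a fiber class type I or type II invariant is expressed as an ordinary cohomological integral on $D$ times an equivariant GW integral of $(\mathbb{P}^1,\infty)$. This is of the required form (a single invariant of $D$, or a product of such with trivial total class).

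For the inductive step with $\pi_*(A)\ne 0$, I would first reduce type I invariants to distinguished type II invariants and invariants of $D$. Starting from a type I invariant, apply relative virtual localization as described in the previous subsection. The fixed loci are indexed by bipartite graphs, and each contribution factors into an $M_0$-part (a twisted integral in the GW theory of $D$) and an $\mathcal{M}_1$-part (a twisted rubber integral with $\Psi_0$ insertions). Apply Proposition \ref{diagonal proposition} to express the $M_0$-factor as a combination of products of ordinary invariants of $D$ (with the same total class, preserving insertions and linked by diagonal duals), and apply Proposition \ref{rubber} to express the $\mathcal{M}_1$-factor as a combination of type II invariants (either of the same class or of the same total class when disconnected, again preserving insertions and linked by dual relative insertions). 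The combined expression therefore satisfies the class- and insertion-tracking part of the statement.

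Next, I would handle the distinguished type II invariants by running the algorithm given by Relations 1, 2, 2'. Relation 1, obtained by degenerating a type I invariant of the form \eqref{typeI} relative to $D_0$, allows one to trade a distinguished type II invariant for the type I invariant \eqref{typeI}, undistinguished type I invariants with the same $A$, type I invariants with strictly smaller class, and type II invariants strictly lower in the ordering. Relations 2 and 2' then re-express the type I invariants that arise on the right-hand side in terms of distinguished type II invariants strictly lower in the ordering, invariants with smaller curve class, and invariants of $D$. Iterating, each step either reduces the curve class (primary induction) or preserves $(g,A)$ and strictly descends in the secondary ordering. All the class and insertion bookkeeping required by the statement is preserved by each of Relations 1, 2, 2', by Proposition \ref{diagonal proposition}, and by Proposition \ref{rubber}.

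The main obstacle is verifying termination and consistency of the double induction. Termination of the secondary induction requires that the partial ordering on distinguished type II invariants with fixed $(g,A)$ is lower bounded, which follows from the same finiteness arguments as in Lemma \ref{bounded}: only finitely many choices of relative and absolute insertions, bounded degrees, and bounded multiplicities occur below any given invariant. Termination of the primary induction uses Gromov compactness to bound the number of effective classes below a given area. The careful part is ensuring that the insertion-tracking clause of the proposition survives all reductions: at every application of localization, rubber calculus, or Relations 1, 2, 2', the original insertions are carried along either as insertions of a single factor invariant or as insertions of factors that meet along nodes carrying dual classes from the diagonal, so the two structural properties in the conclusion propagate inductively.
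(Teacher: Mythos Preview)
Your proposal is correct and follows essentially the same approach as the paper: the paper presents Proposition~\ref{typeI/II} as a summary of the Maulik--Pandharipande algorithm, outlining exactly the double induction on $(g,A)$ and the ordering of distinguished type~II invariants, with the fiber-class base case, the localization step combined with Propositions~\ref{diagonal proposition} and~\ref{rubber}, and the use of Relations~1,~2,~2' for the inductive reduction. Your additional remarks on termination (via Lemma~\ref{bounded}-style finiteness and Gromov compactness) and on the insertion-tracking clause are consistent with the paper's treatment, which leaves these points implicit in its summary.
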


\subsection{Two vanishing theorems for invariants with a descendent point
insertion}\label{vanishing} Our interest here is to relate relative
invariants with a descendent point insertion to invariants of $D$
with a descendent point insertion.

    \subsubsection{The first vanishing theorem}
    By keeping track of the reduction scheme of \cite{MP} we have the following vanishing theorem

    \begin{theorem} \label{MP}
     Suppose that a non-fiber class $B$ has the property that $\pi_*(B)$ is not a sum of
     a uniruled class and an effective class of $D$. Then,
    $$<\varpi, \tau_{i_1}([pt]), \tau_{i_2}(\beta_2[D_0]), \dots,
    \tau_{i_k}(\beta_k[D_0])|\mu>^{P, D_{\infty}}_B=0.$$
    \end{theorem}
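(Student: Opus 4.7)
The plan is to reduce the given relative invariant to a sum of products of ordinary Gromov--Witten invariants of $D$, and then extract a contradiction from the hypothesis on $\pi_*(B)$. Since $B$ is a non-fiber class, Proposition \ref{typeI/II} applies directly once we first turn the $(P,D_\infty)$ relative invariant into a type I (and then via the rubber calculus a combination of type I/II) invariant. Concretely, I would apply Proposition \ref{typeI/II} to rewrite
\[
\langle \varpi, \tau_{i_1}([pt]), \tau_{i_2}(\beta_2[D_0]), \dots, \tau_{i_k}(\beta_k[D_0])\,|\,\mu\rangle^{P,D_\infty}_B
\]
as a finite sum of products of GW invariants of $D$, where the total class of each product equals $\pi_*(B)$, each original insertion is carried by some factor, and adjacent factors are linked by dual diagonal insertions.

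Fix one such product with nonzero value. By the tracking property, one distinguished factor $F_0$ carries the descendent point insertion $\tau_{i_1}([pt])$. All other factors $F_1,\dots,F_r$ are ordinary genus zero GW invariants of $D$; being nonzero, each has class $A_j$ represented by a $J$--holomorphic stable map and hence $A_j$ is effective in $D$. The class $A_0$ of the factor $F_0$ is, by definition, a (possibly descendent, possibly disconnected) uniruled class of $D$ in the sense of Theorem \ref{descendent}. Summing gives
\[
\pi_*(B)=A_0+A_1+\cdots+A_r,
\]
with $A_0$ uniruled and $\sum_{j\ge 1}A_j$ effective, contradicting our hypothesis on $\pi_*(B)$. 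Therefore every product in the reduction of our invariant vanishes, and so does the invariant itself.

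The only subtle point — and the step I expect to require the most care — is matching the ``insertion tracking'' of the reduction algorithm with the definition of a uniruled class. Along the rubber calculus (dilaton, divisor, topological recursion) and the virtual localization relative to $D_0$, new absolute insertions (dilatons, $\pi^*\omega_D$, diagonal classes from Proposition \ref{diagonal proposition}) can be introduced and $\Psi_0$ factors are manipulated; one must verify that through all these manipulations the descendent point insertion $\tau_{i_1}([pt])$ really does end up, without being destroyed or split apart, inside a single GW-invariant factor of $D$ in the final expression of Proposition \ref{typeI/II}. The final clause of that proposition (``each insertion of the original invariant appears as an insertion of a factor invariant'') is exactly what is needed, but I would want to revisit the inductive proof to confirm that the additional insertions introduced by the rubber calculus do not disturb the point insertion's role in identifying a uniruled factor class.

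A secondary technical point is ensuring that the argument is compatible with disconnected domains appearing in Proposition \ref{typeI/II}: a disconnected configuration of stable maps in $D$ realizing class $A_0$ with a point insertion on one component still witnesses uniruledness via Theorem \ref{descendent}, so the conclusion goes through. Thus the proof is essentially a careful bookkeeping argument on top of the \cite{MP} reduction, with the real content being a clean separation of $\pi_*(B)$ into a uniruled summand plus an effective summand.
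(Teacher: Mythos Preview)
Your strategy is the same as the paper's: reduce the relative invariant to products of ordinary invariants of $D$, track the point insertion through the reduction, and read off a decomposition $\pi_*(B)=(\text{uniruled})+(\text{effective})$. The paper does this by applying relative virtual localization directly (not by quoting Proposition~\ref{typeI/II} as a black box), but that is a difference of packaging, not of substance.

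There is, however, a genuine gap. You assert that the factor $F_0$ carrying the point insertion has class $A_0$ which ``is, by definition, a \dots\ uniruled class of $D$ in the sense of Theorem~\ref{descendent}''. This fails when $A_0=0$: a zero-class genus-zero invariant with a $[pt]$ insertion can perfectly well be nonzero (it is an integral over $\overline{\mathcal M}_{0,n}\times D$), yet the zero class is never uniruled, and Theorem~\ref{descendent} gives you nothing. Proposition~\ref{typeI/II} as stated does \emph{not} exclude zero-class factors, so your appeal to its insertion-tracking clause is not enough. This is exactly the issue the paper isolates and resolves in Lemma~\ref{constant vertex}: if the point lands on a constant vertex, then for that vertex's contribution to be nonzero the incoming diagonal insertion must be $1\in H^0(D)$, forcing its dual $[pt]$ onto the adjacent vertex; iterating along the graph transports the point constraint to a vertex with nonzero class. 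You flagged this as ``the only subtle point'' but then deferred it; it is in fact the one place where real work is needed beyond the general reduction, and without it the argument does not close.

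A second, smaller point: even once $A_0\neq 0$, a nonzero \emph{descendent} invariant with a point constraint does not literally make $A_0$ a uniruled class in the sense of Definition~2.1 (which is non-descendent); one must further invoke the reduction of descendants as in \cite{HLR} or argue, as the paper does via Lemma~\ref{constant vertex2} and Proposition~\ref{diagonal proposition}, that the final factors are ordinary invariants of $D$. You should make this explicit rather than leaning on Theorem~\ref{descendent}, which only concludes that $D$ is uniruled, not that the specific class $A_0$ is.
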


\begin{proof}
 To simplify the computation, we consider the following  invariant subset
    of the moduli space $\overline{\M}^{P, D_{\infty}}_{0,k+l}(A)$ of relative stable maps.
Without loss of generality, we choose a
    point, denoted by $S_1=pt$, and
    choose submanifolds $S_i, 2\leq i\leq k, $ of $D_0$ representing $\beta_1=[pt]$ and $\beta_i[D_0]$. We here assume
     that $S_i, i\geq 1$
    intersect transversely. Let
\begin{equation}\begin{array}{ll}
    &\overline{\M}^{P, D_{\infty}}_{0,k+l}(A, pt, S_2,
    \cdots, S_k)\cr
    =&\{f\in \overline{\M}^{P, D_{\infty}}_{0,
    k+l}(A); f(x_0)=pt, f(x_i)\in S_i, 1\leq i\leq k\}.
    \end{array}
\end{equation}
We can construct a  virtual fundamental cycle for this space
    which will give the desired relative invariants.
     Of course we need to modify the deformation-obstruction complex. The
    linearization of the Cauchy-Riemann operator is the complex
    $$D_f: \{v\in \Omega^0(f^*TP), v(x_i)\in TS_i\}\rightarrow
    \Omega^{0,1}(f^*TP).$$

As each $S_i$
    is in the fixed locus $D_0$,
     $\overline{\M}^{P, D_{\infty}}_{0,k+l}(A, pt, S_2,
    \cdots,
    S_k)$ also carries a natural $S^1$-action.
Thus we still can
    apply the localization formula to this moduli space. The only
    difference here is that we need to calculate the fixing-moving part
    of our new complex to determine the contributions.


  Since $\pi$ is an equivariant map, each $\pi^*(\alpha_i)$ is an equivariant class
    containing no equivariant parameter. Applying the localization
    formula, the relative invariant is given by
    \begin{equation}\label{bipartite graph}
    \sum_F\int_{[\M_F]^{vir}} \frac{\prod_t
    \bar{\psi}^{i_t}_t\varpi}{e(N_{vir})},
    \end{equation}
    where each $F$ is indexed by a bipartite graph and  $\bar{\psi}_i$ is the equivariant extension of $\psi_i$.

     Recall that for each bipartite graph the corresponding component of the fixed point loci is the fiber product of
     a stable map moduli space of $D_0$ and a rubber stable map moduli space.
Each vertex of the bipartite graph either  corresponds to a
connected component of
    stable maps into $D_0$ or a connected component of rubber
    stable maps into $D_0$. We call the vertex {\em constant
    vertex} if its homology class is zero.

Choose a self dual basis $\{\alpha_i\}$ of $H^*(D;\mathbb R)$ and
let $\{\check \alpha_i\}$ be its dual basis. Then the diagonal class
of $D$ is simply
     \begin{equation}\label{diagonal}[\Delta]=\sum_i\alpha_i\otimes \check \alpha_i.
     \end{equation}

Fix a bipartite graph $\Lambda$ corresponding to a component $F$.
Let $E$ be the set of oriented edges of $\Upsilon$.
 For each edge of
$\Lambda$, we
     insert the diagonal class $[\Delta]$ in (\ref{diagonal}) to split $[\mathcal M_F]^{vir}$  as
     a disjoint union of
      products of virtual fundamental cycles, where each product has factors indexed by vertices of $\Upsilon$.
The union is over the space $T$ of the maps $E$ to the set of
ordered pairs $\{(\alpha_i,\check \alpha_i)\}$ such that two edges
with opposite orientations map to opposite pairs. For each $P\in T$
we define $P_{\lambda}$ to be the set of elements of $\{\alpha_i\}$,
each coming from $P$ and an edge originating at $\lambda$.

We thus can   write  explicitly the contribution of each bipartite
graph as a sum
    \begin{equation} \label{edge} \sum_{P\in T}\prod_{vertices} V_{\lambda, P|_{\lambda}},
    \end{equation}
    where $V_{\lambda, P|_{\lambda}}$ is
     the twisted or rubber invariant at $\lambda$ with added marked points constrained
     by $P|_{\lambda}$.

    Clearly  in each summand of (\ref{edge}) there is a vertex $\lambda$ such that  $V_{\lambda, P_{\lambda}}$
    contains a descendent point insertion.

    For each vertex $\lambda$, we denote its curve class by
    $B(\lambda)$. It is clear that
    $\pi_*(B)=\sum_{\lambda}\pi_*(B(\lambda)).$
    For each $D_0$ vertex, $V_{\lambda}$ is a {\it generalized} twisted Gromov-Witten invariant by normal bundle of $D_0$,
    of the form (\ref{g twist}).
    For each rubber vertex
    $\lambda$ with class $B(\lambda)$,
    $V_{\lambda}$ is a twisted rubber invariant.

    By the construction, one of $V_{\lambda}$'s contains a point insertion. The remaining argument is
    to keep track of the point insertion on the invariant with
    nonzero curve class.
The  case where the vertex $\lambda$ containing a point insertion
    could be a constant vertex is dealt with in the next lemma which we will prove afterwards.

   \begin{lemma}\label{constant vertex}
       If the vertex ${\lambda}$ containing a point insertion is a
       constant vertex, then there is another vertex $\lambda'$
        in the graph such that  $B(\lambda')\neq 0$ and
        $V_{\lambda'}$ contains a point insertion.
        \end{lemma}

    \begin{lemma} \label{constant vertex2}
      Each
      $V_{\lambda, P|_{\lambda}}$ can be expressed as
      \begin{equation} \label{nonconstant}
    \sum_{\pi_*(B(\lambda))=B_1+\cdots+B_k}a <,>^D_{B_1}\cdots <,>^D_{B_i}\cdots
    <,>^D_{B_k},
    \end{equation}
    where $a$ is a combinatoric constant.
    \end{lemma}




 Suppose
 $$<\varpi, \tau_{i_1}([pt]), \tau_{i_2}(\beta_2[D_0]), \dots,
    \tau_{i_k}(\beta_k[D_0])|\mu>^{P, D_{\infty}}_B\neq 0.$$
     Then one summand in (\ref{bipartite graph}) is
    nonzero. Hence one summand in the the corresponding (\ref{edge}) is
    nonzero, and
 this possible only if  one $V_{\lambda, P|_{\lambda}}$ is nonzero.

    By Lemmas  \ref{constant vertex} and  \ref{constant vertex2}
     we can express $\pi_*(B(\lambda))$ as
    an effective decomposition such that one of the summands is a uniruled
    class.
    This contradicts to the assumption.
\end{proof}
We now prove Lemma \ref{constant vertex}

\begin{proof}

    We have to show that the point insertion can
    always be applied to a nonconstant vertex. Suppose that the point
    insertion is on a constant vertex in $D_0$. We claim  that this vertex
    cannot contain any other $D$-insertions. Suppose
    that the constant vertex contains  absolute marked point
    $j_1, \cdots, j_h$. There is an additional marked point from the edge connecting it to a rubber vertex.
    The moduli space is then $\overline{\M}_{0,h+1}$.


     Via the diagonal insertion process it is easy to see that the contribution of the constant vertex under
     consideration is the summation of terms
     $$\int_{\overline{\M}_{0,h+1}} \alpha_i H,$$
     for some equivariant class $H$.
     This term is nonzero only if
     $\alpha_i=1$. Hence, its dual $\check \alpha_i=[pt]$ is
     inserted into the contribution of the rubber component.

     If the
     rubber vertex under consideration is also constant. We can
     apply the same argument to transport the point insertion into
     the next vertex along the graph until reaching a nonconstant
     vertex.


    \end{proof}

    Next, we prove Lemma \ref{constant vertex2}.

    \begin{proof}
 There are two types of vertices, $D_0$-vertices and rubber
        vertices.

        If $\lambda$ is a $D_0$-vertex, $V_{\lambda, P|_{\lambda}}$ is a
        generalized $L-$twisted Gromov-Witten invariant.
        We simply apply Farber-Pandharipande's
        argument.

        As mentioned in 4.1, the basic idea is that by marking an additional
        point we can express $V_{\lambda,P|_{\lambda}}$ as an ordinary
        Gromov-Witten invariant of $D$ with an additional marked
        point plus the boundary contributions. By Proposition \ref{diagonal proposition}, the boundary
        contribution can be written as
        \begin{equation} \label{nonconstant}
    \sum_{\pi_*(B(\lambda))=B_1+\cdots+B_k}a <,>^D_{B_1}\cdots <,>^D_{B_i}\cdots
    <,>^D_{B_k},
    \end{equation}



     When $\lambda$ is a rubber vertex, the much more complicated
    argument of Maulik and Pandharipande achieves the same for the twisted rubber invariant
    $V_{\lambda,P|_{\lambda}}$.

    \end{proof}

    Recall
    $$V=min\{\omega(A),<\iota^*\varpi, \tau_{i_1}([pt]), \tau_{i_2}(\beta_2), \dots,\tau_{i_k}(\beta_k)>^D_A\neq 0\}.$$
    The above theorem implies that the first part of Theorem of
    \ref{v}.

    \begin{cor} \label{MP1}
     Suppose that a non-fiber class $B$ has the property that $\omega(B)<V$. Then,
    $$<\varpi, \tau_{i_1}([pt]), \tau_{i_2}(\beta_2[D_0]), \dots,
    \tau_{i_k}(\beta_k[D_0])|\mu>^{P, D_{\infty}}_B=0.$$
    \end{cor}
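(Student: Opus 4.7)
The plan is to derive Corollary \ref{MP1} as an immediate consequence of Theorem \ref{MP}. By Theorem \ref{MP}, the relative invariant vanishes as soon as $\pi_*(B)$ cannot be decomposed as a sum $A_1+A_2$ with $A_1$ a uniruled class of $D$ and $A_2$ an effective class of $D$. So the task reduces to ruling out such a decomposition under the hypothesis $\omega(B)<V$.

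First I would translate the hypothesis on the area of $B$ into an inequality on $\omega_D(\pi_*(B))$. The same argument as in Lemma \ref{area2} applies to any class $B\in H_2(P;\mathbb Z)$: using $H_2(P;\mathbb Z)=H_2(D_{0};\mathbb Z)\oplus \mathbb Z F$, we may write $B=\pi_*(B)+(B\cdot D_{\infty})F$. Since $B$ appears as the class of a relative stable map to $(P,D_{\infty})$ with multiplicities $\mu=(\mu_j)$, we have $B\cdot D_{\infty}=\sum_j\mu_j\ge 0$. Combined with $\omega_P(F)>0$ and $\omega_P\mid_{D_{0}}=\omega_D$, this gives
\begin{equation*}
\omega_D(\pi_*(B))=\omega_P(\pi_*(B))\le \omega_P(\pi_*(B))+(B\cdot D_{\infty})\omega_P(F)=\omega(B)<V.
\end{equation*}

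Next I would deduce that no decomposition $\pi_*(B)=A_1+A_2$ of the forbidden type exists. Suppose for contradiction that $A_1$ is a uniruled class of $D$ and $A_2$ is effective. By the definition of $V$ as the minimum $\omega_D$-area over classes of $D$ carrying a nonzero GW invariant with a point insertion of the relevant form, we have $\omega_D(A_1)\ge V$, and effectiveness of $A_2$ gives $\omega_D(A_2)\ge 0$. Adding yields $\omega_D(\pi_*(B))\ge V$, contradicting the inequality of the previous paragraph. Hence Theorem \ref{MP} applies and the relative invariant vanishes.

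The only subtlety I anticipate lies in the last paragraph: one must verify that the notion of ``uniruled class of $D$'' obtained at the vertex $\lambda'$ in the proof of Theorem \ref{MP} is compatible with the notion used to define $V$. This should be transparent from the proof of Theorem \ref{MP}, since the localization/diagonal-splitting procedure distributes the original $[pt]$ insertion to a single vertex invariant of $D$ while preserving the form of the surviving insertions (the extra diagonal insertions produced along the way do not decrease the area of that vertex's class), so $\omega_D(A_1)\ge V$ holds. No further ingredients beyond Theorem \ref{MP} and Lemma \ref{area2} are needed.
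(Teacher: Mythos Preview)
Your proposal is correct and follows essentially the same route as the paper: both write $B=\pi_*(B)+|\mu|F$ (equivalently $B=\pi_*(B)+(B\cdot D_\infty)F$) to deduce $\omega_D(\pi_*(B))\le \omega(B)<V$, and then invoke the definition of $V$ to rule out any decomposition of $\pi_*(B)$ as a uniruled class plus an effective class, so that Theorem \ref{MP} applies. Your version is simply more detailed; the extra paragraph on the compatibility of the notion of ``uniruled class'' with the definition of $V$ is a reasonable caution, though the paper's own proof does not pause over this point.
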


    \begin{proof}
     We observe that $B=B_0+|\mu|F$ for
     $B_0=\pi_*(B)\in H_2(D_0, \Z)$. Therefore,
     $\omega(\pi_*(B))\leq \omega(B)<V$. By the definition  of $V$,
     $\pi_*(B)$ can not be the sum of an uniruled class and an effective class of $D$.
     \end{proof}

\subsubsection{The second vanishing theorem}
    Another case of interest is the special graph in the
    degeneration formula where the degeneration graph lies
    completely in $P$. This is an admissible invariant with no
    relative insertion on $D_{\infty}$. The purpose of this
    subsection is to show that the corresponding relative invariant
    of $(P, D_{\infty})$ is zero.
    Namely, we would like to calculate certain admissible invariants with empty relative
    insertion,
    $$<\varpi, \tau_{i_1}([pt]), \tau_{i_2}(\beta_2[D_0]), \cdots,
    \tau_{i_k}(\beta_k[D_0])|\emptyset>^{P, D_{\infty}}_{A}, $$
with $m=\sum_t
        (i_t+1)\leq D_{0}\cdot A$.
    Since there are no relative insertions at $D_{\infty}$, we have $D_{\infty}\cdot A=0$. It implies
    that $A\in  H_2(D_0, \Z)$.
    The dimension condition is
    \begin{equation}
    \begin{array}{ll}
    &2(C_1(A)+D_0\cdot A+2|\nu|+n-2+k+l)\cr
    =&\sum_t
    i_t+2k+2n+\sum_t \deg(\beta_t)+\deg(\varpi).
    \end{array}
    \end{equation}
    Using assumption $m\leq D\cdot A$, we have
    \begin{equation}
    \begin{array}{ll}
    &2(C_1(A)+n-2+k+l)\cr
    \leq& 2n+\sum_t \deg(\beta_t)+\deg(\varpi).
    \end{array}
    \end{equation}

    \begin{theorem}
        Suppose that $0<\omega(\pi_*(A))\leq V$ or $\pi_*(A)$ is
        not a sum of a uniruled class and nonzero effective class of $D$ and $m=\sum_t
        (i_t+1)\leq D_0\cdot A$. Then,
$$<\varpi, \tau_{i_1}([pt]), \tau_{i_2}(\beta_2[D_0]), \cdots,
    \tau_{i_k}(\beta_k[D_0])|\emptyset>^{P, D_{\infty}}_{A}=0$$
    \end{theorem}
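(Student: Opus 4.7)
The plan is to apply the same $S^1$-virtual localization machinery that drove the proof of Theorem~\ref{MP}, with the sub-admissibility $m \leq D_0 \cdot A$ ruling out precisely the borderline cases left open by Corollary~\ref{MP1}. Since $\mu = \emptyset$ and $D_\infty \cdot A = 0$, the class $A$ lies in the image of $H_2(D_0;\mathbb{Z}) \to H_2(P;\mathbb{Z})$, so $\pi_*(A)$ is a genuine class of $D$. First I would localize on $\overline{\M}^{P, D_\infty}_{0,k+l}(A,\,pt,\,S_2,\ldots,S_k)$ just as in the proof of Theorem~\ref{MP}, writing the invariant as a sum indexed by bipartite graphs, each summand being a fiber product of twisted invariants at $D_0$-vertices and twisted rubber invariants at rubber vertices. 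The arguments of Lemmas~\ref{constant vertex} and~\ref{constant vertex2} apply verbatim, migrating the point insertion onto a non-constant vertex $\lambda$ and expressing its contribution as a sum of products of ordinary GW invariants of $D$ whose total class equals $\pi_*(B(\lambda))$ and one of whose factors carries a descendent point insertion.

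A case analysis then reduces the proof to a single borderline configuration. Summing the vertex classes gives $\pi_*(A) = \sum_\lambda \pi_*(B(\lambda))$, so the point-factor class $B'$ satisfies $\omega_D(B') \leq \omega_D(\pi_*(A))$. Under hypothesis~(b), if $\pi_*(A)$ is not a sum of a uniruled class and any (possibly zero) effective class, Theorem~\ref{MP} yields the vanishing immediately; the residual case is $\pi_*(A)$ itself uniruled. Under hypothesis~(a), Corollary~\ref{MP1} handles $\omega_D(\pi_*(A)) < V$, leaving only $\omega_D(\pi_*(A)) = V$. In either residual case a nonzero contribution would force every non-point vertex to be constant and $B' = \pi_*(A)$ to be a minimal uniruled class of $D$ achieving the area $V$.

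The main obstacle is ruling out this last single-factor contribution, where a descendent GW invariant of $D$ on a minimal uniruled class meets the equivariant factors supplied by the $L$-twisting at the $D_0$-vertex. Here the sub-admissibility $m \leq D_0 \cdot A = c_1(L)(A)$ enters decisively: the dimension equation displayed just before the theorem statement, combined with $c_1(P)(A) = c_1(D)(\pi_*A) + D_0 \cdot A$, forces a dimensional imbalance between the descendent orders surviving on the $D$-side and the virtual dimension of $\overline{\M}^D_{0,k+l}(\pi_*A)$, once the shifts introduced by the equivariant Euler class $e_L$ of Section~4.1 and by the $\Psi_0$-reduction through the rubber calculus (Propositions~\ref{rubber} and~\ref{typeI/II}) are accounted for. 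Making this shift-bookkeeping precise---tracking how each descendent order $i_t$ is adjusted by the $c_1(L)(A) - m \geq 0$ leftover twisting, and confirming that the resulting $D$-invariant cannot simultaneously carry a point insertion and remain within the constraints imposed by minimality of $V$ (cf.\ Remark~\ref{V})---is the delicate combinatorial step where the hypothesis $m \leq D_0 \cdot A$ is used essentially.
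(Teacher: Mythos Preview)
Your localization framework and the treatment of the general case (at least two vertices with non-fiber class) via Lemma~\ref{constant vertex} and the argument of Theorem~\ref{MP} are correct and match the paper. The gap is in your ``residual case,'' where the full class $\pi_*(A)$ sits on a single nonconstant vertex. You propose to push that vertex through Lemma~\ref{constant vertex2} and the rubber calculus, reduce to ordinary $D$-invariants, and then argue that the surviving $D$-invariant with a point insertion must vanish by some dimensional bookkeeping on the descendent shifts. You explicitly flag this ``shift-bookkeeping'' as the delicate step and do not carry it out. The problem is that once $\pi_*(A)$ is itself a minimal uniruled class, $D$-invariants with a point insertion in class $\pi_*(A)$ are \emph{not} a priori zero, so vanishing cannot come from minimality alone; you would have to track exactly which descendents and diagonal splittings the reduction produces and verify each product vanishes. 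That is a genuine, nontrivial computation you have not supplied, and it is not clear it terminates cleanly.

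The paper avoids this entirely by doing the dimension count \emph{before} any reduction to $D$-invariants, directly on the fixed locus. There are two subcases you have conflated. In the first, the bipartite graph is a single $D_0$-vertex with no edges; the fixed locus is $\overline{\M}^D_{0,k+l}(A,pt,S_2,\dots,S_k)$ itself, and the inequality $m\le D_0\cdot A$ combined with the dimension condition displayed before the theorem gives that its virtual dimension is strictly less than $\deg(\varpi)$. Since $\varpi$ carries no equivariant parameter, the equivariant integral vanishes outright---no need to expand $1/e(N_{vir})$ or invoke twisted invariants. In the second, there is a single nonconstant rubber vertex (with several constant $D_0$-vertices and edges); here one computes the dimension of the fibered product $M_0\boxtimes\mathcal{M}_1$ explicitly, using $|\mu|=D_0\cdot A$ and again $m\le D_0\cdot A$, and finds it is still strictly less than $\deg(\varpi)$. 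Both computations are elementary once set up, and neither requires following the point insertion through Propositions~\ref{rubber} or~\ref{typeI/II}. The sub-admissibility hypothesis is used exactly here, in these two raw dimension counts, not in any downstream reduction.
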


     Notice that unlike Theorem \ref{MP}, $\pi_*(A)$ could be a uniruled class.

    \begin{proof}
     If a constant vertex
    is a  $D_0-$vertex, it represents constant stable maps into $D_0$.
    If a constant vertex is a rubber vertex, it represents a
    multiple of fiber classes in the rubber. Such a rubber stable
    map necessarily contains relative insertions which contradicts to our
    assumption. Therefore,
    rubber constant vertices do not exist in our situation.

There are three cases to consider.

    {\bf Case 1}: Suppose that our bipartite graph consists of no edges and only
    one $D_0-$vertex. Since there are no rubber constant vertices, there are no other constant
    vertices either. In this case, the fixed point set is simply
    $\overline{\M}^D_{0, k+l}(A, pt, S_2, \cdots, S_k)$.

It is
    clear that the fixed part of the complex (\ref{complex}) is
    $$D_{f,D_0}: \{v\in \Omega^0(f^*TD_0), v(x_i)\in TS_i, 1\leq i\leq k\}\rightarrow
    \Omega^{0,1}(f^*TD_0),$$
while the moving part is the complex (\ref{complex}).

Therefore, as the fixed point set, its virtual fundamental
    agrees with $$[\overline{\M}^D_{0, k+l}(A, pt, S_2, \cdots,
    S_k)]^{vir}$$
    Furthermore, $N_{vir}=\ker D_{f,L}-\hbox{coker} D_{f,L}$.

    By the localization formula, the contribution of this graph is
    $$
    \int_{[\overline{\M}^D_{0, k+1}(A, pt, S_2, \cdots, S_k)]^{vir}}
    \frac{\prod_t \bar{\psi}^{i_t}_t\varpi}{e(N_{vir})}.
    $$

     The degree of the virtual fundamental class is
    $$2(C_1(A)+n-3+k+l)-2n-\sum_{i=2} \deg(\beta_i).$$
    Both absolute and relative insertions have no equivariant
    parameter. By the dimension condition of the relative invariant, the total degree of insertions is
    straightly larger than the degree of the virtual class.
 Therefore,
    the contribution is zero.

    {\bf Case 2}: There is a only one rubber vertex  with homology
    class $B$, several $D_0-$vertices including some constant $D_0-$vertices. The constant $D_0-$vertices must be connected to the rubber vertex by edges.
    But there may be other edges not connected to any constant vertex.
    Suppose that the partition for the rubber vertex is $\mu$. It is easy to see that
    $$|\mu|=D_0\cdot A, \quad A=B+|\mu| F.$$
    To simplify the notation, we first assume that there
are only relative absolute insertions.
    Then all the marked points lies either on $M_0$ or on an edge.

     Suppose that the vertex under consideration contains
    only marked points $i_1, \cdots, i_l$ from relative absolute insertions. It has an additional marked
    point from the edge which is a degree $m$ multiple cover of the fiber. The moduli space is clearly
    $$\overline{\M}_{0, l+1}\times (S_{i_1}\cap \cdots\cap
    S_{i_l}).$$
    Suppose that the number of constant vertices is $l'_{\mu}$. We also need to consider the edge containing
    a marked point, say $x_i$. Since the image of $x_i$ has to be in $S_i$, it contributes a factor
    $$2n-\deg(\beta_i)$$ to the dimension of fixed point loci. We treat it as  part of $M_0$.
    Let $k'$ be the number of such edges. It is clear that
    $$l'_{\mu}+k'\leq l_{\mu}.$$
    The dimension of $M_0$ is
$$2n(l'_{\mu}+k')-2n-\sum_{t}
    \deg(\beta_{t})+2(k-k')-4l'_{\mu}.$$
 Let us compute the virtual dimension for the rubber vertex
    $\overline{\M}^{\sim}_{0,0}(B,\mu)$.

     Let us compute the virtual dimension
    of the rubber moduli space $\overline{\M}^{\sim}_{0,0}(B,\mu|)$. A
    rubber stable map can be thought as a $\mathbb C^*$-equivalence class of stable maps to $P$
    relative to both $D_0, D_{\infty}$. The relative condition
    determines a unique lift of $B\in H_2(D_{\infty}, \Z)$ to a
    class of $P$. A moment of thought tells us that the lift is
    $A$ itself. Then,
    \begin{equation}
    \begin{array}{ll}&\dim_{\C}[\overline{\M}^{\sim}_{0,0}(B,
    \mu|\nu)]^{vir}\cr
    =&C_1(A)+D_0\cdot A+n+1-3+l_{\mu}-|\mu|-1\cr
    =&C_1(A)+n-3-l_{\mu}.
    \end{array}
    \end{equation}

    Adding back $\varpi$ and using the fact that $m\leq D\cdot A=|\mu|$, the dimension of fixed loci is
    \begin{equation}
    \begin{array}{ll}&2(C_1(A)+n-3+l-l_{\mu})+2(k-k')-4l'_{\mu}-2n-\sum_t
    \deg(\beta_t)\cr
    =&2(C_1(A)+n-2+k+l)-2n-\sum_t
    \deg(\beta_t)-2(k'+l_{\mu}+2l'_{\mu}-1)\cr
    <&\deg(\varpi).
    \end{array}
    \end{equation}
    Hence, the contribution is zero.

{\bf General case}: The general case contains at least two
    vertices with non-fiber homology classes. Let us consider the
    marked point corresponding to the point class. There are two
    cases, either on a nonconstant vertex or on a constant vertex.

    By Lemma \ref{constant vertex} we can transport the point $D$-insertion on
    a constant vertex to one on a nonconstant vertex. Suppose that the homology class of
    this
    nonconstant vertex is $B$. Then, $\omega(\pi_*(B))<\omega(\pi_*(A))$. By the argument of Theorem \ref{MP},
    the contribution is zero.

\end{proof}

\section{The proof of main theorem}
     In this section, we establish our main theorem.
For that purpose, we need an additional result from localization.

\subsection{A nonvanishing Theorem}

    In previous section, we prove a vanishing theorem for the admissible invariant of $(P, D_{\infty})$
    with empty relative insertion on $D_0$. In this section, we consider the sup-admissible
    case with $m=\sum_t (i_t+1)>D_0\cdot A$. The idea is that the invariant of $(P, D_{\infty})$ is
    no longer zero and the invariant of $D_0$ will contribute in a nontrivial way.
    Consider relative invariant
    $$<\varpi, \tau_{i_1}([pt]), \tau_{i_2}(\beta_2[D_0]), \cdots,
    \tau_{i_k}(\beta_k[D_0])|\empty>^{P,
    D_{\infty}}_{A}.$$
    The dimension condition is
    $$2(C_1(A)+D_0\cdot A+n+1+k+l-3)=2\sum_t i_t+2n+
    \sum_t \deg(\beta_t)+2k+\deg(\varpi).$$

    Let $m=\sum_t (i_t+1)$ be the multiplicity. Recall that it is (i)admissible if $m=D_0\cdot A$;
    (ii) sup-admissible if $m> D_0\cdot A$; (iii) sub-admissible if
    $m<D_0\cdot A$. As mentioned in the introduction, we can always
    define an equivalent admissible invariant from a sub-admissible
    invariant. Hence, it is enough to consider admissible and
    sup-admissible invariants only. The dimension condition of
    the divisor invariant
    $$<\iota^*\varpi, \tau_{i_1}([pt]), \tau_{i_2}(\beta_2), \cdots,
    \tau_{i_k}(\beta_k)>^{
    D_{\infty}}_{A}$$ is
    $$2(C_1(A)+n+k+l-3)=2\sum_t i_t+2n+
    \sum_t \deg(\beta_t)+\deg(\varpi).$$
    Both invariants are well-defined only when $k=D_0\cdot A+1$.

    However, there is a well-defined twisted Gromov-Witten
    invariants for all the cases.
     Our expectation is that in the sup-admissible case,
    the invariant of $(P, D_{\infty})$ is dominated by the twisted Gromov-Witten.
    We do not know yet how to
    carry out a correspondence with the twisted
    Gromov-Witten invariants. We hope to come back to it in the
    future.

    Instead, we use the above idea to study a specific sup-admissible case.

    \begin{theorem} \label{nonvanishing}
    Suppose that $0<\omega(A)\leq M$ and $k=D_0\cdot A+1$. Then,
    \begin{equation}
    \begin{array}
   {ll}
   & <\varpi, [pt][D_0], \beta_2 [D_0], \cdots, \beta_k [D_0]
    |\emptyset>^{P, D_{\infty}}_{A}
        \cr
        =&c<\iota^*\varpi, [pt], \beta_2, \cdots, \beta_k>^{D}_{A}
        \end{array}
        \end{equation}
    for $c\neq 0$.
    \end{theorem}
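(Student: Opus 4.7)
The plan is to apply the $S^1$-virtual localization formula to the relative moduli space $\overline{\M}^{P,D_{\infty}}_{0,k+l}(A)$, following the same setup as in the proofs of Theorems \ref{MP} and the second vanishing theorem, and to exhibit a single bipartite graph whose contribution reproduces the divisor invariant up to a nonzero constant, while all other graphs vanish. Since $\mu=\emptyset$ and every insertion is supported on $D_0$ (via a $[D_0]$-factor, or as a point on $D_0$), we have $D_{\infty}\cdot A=0$, so $A$ lifts from $H_2(D_0;\Z)$. Because $\mu=\emptyset$, the balancing of multiplicities at the top of the rubber forces every edge of the bipartite graph to have multiplicity zero; hence no edges exist and no rubber vertices occur. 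The contributing graphs therefore consist purely of disconnected $D_0$-vertices, and the localization factor $e(N^{\text{virt}})$ reduces to the generalized $L$-twisted piece $e_L$ on each vertex.

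Next I would use minimality to kill all multi-vertex graphs. Write $A=\sum_{\lambda} A_{\lambda}$; each $A_{\lambda}$ has $\omega(A_{\lambda})\leq\omega(A)\leq V$, and the distinguished point insertion $[pt]$ sits on exactly one vertex, say $\lambda_0$. By Proposition \ref{diagonal proposition} (applied as in Lemma \ref{constant vertex2}) the twisted contribution of the vertex $\lambda_0$ is a sum of products of ordinary GW invariants of $D$ whose total class is $\pi_*(A_{\lambda_0})$ and one of whose factors carries $[pt]$. That point-carrying factor is uniruled, so by the definition of $V$ its symplectic area is at least $V$. Combined with $\omega(A_{\lambda_0})\leq V$, this forces $\omega(A_{\lambda_0})=V$, $\pi_*(A_\lambda)=0$ for $\lambda\neq\lambda_0$, and hence (since any other vertex would have to be a constant $D_0$-vertex carrying only restriction insertions, which is generically unstable or zero-degree) the graph degenerates to the single $D_0$-vertex of class $A$. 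When $\omega(A)<V$ the same argument forces vanishing of every contribution and both sides of the theorem are zero, so we may assume $\omega(A)=V$.

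For the single surviving graph the fixed locus is $\overline{\M}^D_{0,k+l}(A)$ with constraints $\iota^*\varpi,[pt],\beta_2,\dots,\beta_k$, and the virtual normal bundle consists solely of the moving deformation class whose equivariant Euler class is, after the non-equivariant limit, exactly the class $e_L$ of the complex $D_{f,L}$ of \S\ref{twist-D}. The crucial dimension coincidence is that, since $k=D_0\cdot A+1=c_1(L)(A)+1$, the rank of $\mathrm{coker}\,D_{f,L}-\mathrm{ker}\,D_{f,L}$ is exactly zero; hence $e_L$ contributes a numerical (nonzero rational) factor rather than a nontrivial cohomology class. The localization integral thereby reduces to a universal constant times the generalized $L$-twisted invariant
\[
\langle \iota^*\varpi,[pt],\beta_2,\dots,\beta_k\rangle^{D,L}_{0,A}.
\]
Applying Proposition \ref{diagonal proposition} once more, this twisted invariant equals $\langle \iota^*\varpi,[pt],\beta_2,\dots,\beta_k\rangle^D_A$ plus boundary contributions that are products of ordinary GW invariants of $D$ with total class $A$; by the same minimality argument all boundary products vanish except those in which the $[pt]$-carrying factor exhausts the class $A$, which collapse back to a multiple of the desired divisor invariant.

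The main obstacle is tracking the constant $c$ and verifying $c\neq 0$. This requires assembling the universal factor from the rank-zero $e_L$, the automorphism factors, and the combinatorial constants from the diagonal splitting in Proposition \ref{diagonal proposition}, and checking that all of them contribute nonzero rational weights. In particular one must rule out accidental cancellations between the genuine single-vertex term and the boundary corrections; this is handled by observing that the boundary products are each proportional to the same divisor invariant (the other factor being a constant-class integral with $[\text{pt}]_D\cdot\beta_i$ type evaluations), so the total coefficient is a sum of explicit positive rational weights, hence nonzero. Once this is in place, the identity $c\cdot\langle\iota^*\varpi,[pt],\beta_2,\dots,\beta_k\rangle^D_A$ follows from the localization formula.
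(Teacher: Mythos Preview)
Your central claim---that $\mu=\emptyset$ forces the bipartite localization graph to have no edges and no rubber vertices---is incorrect, and this leaves a genuine fixed-locus type untreated. The edges in the graph are multiple covers of fibers in the root component $Y_0$, running from $D_{0,0}$ to $D_{0,\infty}$; they attach to a rubber vertex at its near boundary $D_{1,0}$, not at the far boundary $D_{m,\infty}$. The hypothesis $\mu=\emptyset$ only says the rubber carries no relative marking at $D_{m,\infty}$; it imposes nothing on the partition at $D_{1,0}$. Unlike rubber maps to $\mathbb P^1$, for a nontrivial $\mathbb P^1$-bundle there is no balance $|\mu'|=|\nu'|$ between the two ends: for a rubber class $B'$ one has $D_0\cdot B'-D_\infty\cdot B'=c_1(L)(\pi_*B')$, which is typically nonzero. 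Hence graphs with a single rubber vertex, edges of total multiplicity $D_0\cdot A$, and constant $D_0$-vertices do occur. This is exactly Case~2 in the paper's proof, and it is eliminated not by any multiplicity balancing but by a dimension count: the virtual dimension of that fixed locus is strictly less than $\deg(\varpi)$, so its contribution vanishes. Note also that under your (mistaken) no-edge assumption, connectedness of the domain already forces a single vertex, so your multi-$D_0$-vertex minimality discussion would never be triggered; the paper instead deploys the minimality argument in the General case (at least two nonconstant vertices, possibly rubber), just as in the second vanishing theorem.

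A secondary point: your handling of the single $D_0$-vertex graph is more circuitous than needed. Once $k=D_0\cdot A+1$ gives the virtual normal bundle rank zero, $1/e(N_{\mathrm{vir}})$ is a degree-zero equivariant class $c+t^{-1}\gamma_1+t^{-2}\gamma_2+\cdots$, and $c\neq 0$ simply because $e(N_{\mathrm{vir}})$ is invertible after inverting $t$. Since the non-descendent insertions $\varpi$ already saturate the dimension of $[\overline{\M}^D_{0,k+l}(A,pt,S_2,\dots,S_k)]^{\mathrm{vir}}$, only the constant term $c$ survives the integral, yielding $c\,\langle\iota^*\varpi,[pt],\beta_2,\dots,\beta_k\rangle^D_A$ directly. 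There is no need to invoke Proposition~\ref{diagonal proposition}, and the worry about boundary corrections and accidental cancellations disappears.
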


\begin{proof}

    The proof is similar to that of the second vanishing theorem.
     Applying localization
    formula, relative invariant is the non-equivariant limit of
    $$\sum_F\int_{[\M_F]^{vir}} \frac{
    \varpi}{e(N_{vir})}.$$

We divide into 3 cases.

    {\bf Case 1}: Suppose that our bi-partite graph consists of only
    one vertex corresponding to the ordinary stable map into $D_0$
    and no edges. In this case, the fixed point set is simply
    $\overline{\M}^D_{0, k+l}(A, pt, S_2, \cdots, S_k)$.
    By the localization formula, the contribution of this graph is
    $$
    \int_{[\overline{\M}^D_{0, k+1}(A, pt, S_2, \cdots, S_k)]^{vir}}
    \frac{\varpi}{e(N_{vir})}.
    $$
    Since $\varpi$ have no equivariant parameter,
    we only need to take non-equivariant limit of $\frac{1}{e(N_{vir})}$.
    The latter is precisely the Euler class $e_L$ appearing in
    twisted Gromov-Witten invariants. Therefore, the contribution
    is just the twisted Gromov-Witten invariant
    $$<\iota^*\varpi, [pt], \beta_2, \cdots,
    \beta_k>^{D,L}_{A}.$$
    In the case of $k=D_0\cdot A+1$,
$$\dim [\overline{\M}^D_{0, k+l}(A, pt, S_2, \cdots,
    S_k)]^{vir}=2\sum_t i_t+ \deg (\varpi).$$  We
    immediately obtain the contribution as
    $$c<\iota^*\varpi, [pt], \beta_2,
    \cdots, \beta_k>^D_A,$$ where $c$ is the constant term of
    $\frac{1}{e(N_{vir})}$. Note that $N_{vir}$ has rank zero and hence $\frac{1}{e(N_{vir})}$ is a degree zero
    equivariant class. Therefore, it has an expression
    $$
    c+t^{-1}\gamma_1+t^{-2}\gamma_4+\cdots+t^{-m}\gamma_{2m}+\cdots,
    $$
    where $\gamma_{2m}$ has degree $2m$. On the other hand, the equivariant Euler class
    $e(N_{vir})$ is an invertible element after inverting
    equivariant parameter $t$. It implies that $c\neq 0$.

       The precise value of $c$ can also be worked out by the method of \cite{FP}.
       However, we don't need it in our paper.

    {\bf Case 2}; There is only one  rubber vertex with homology
    class $B$, several $D_0-$vertices including some constant vertices.
    Using the computation in the same c of the second vanishing
    theorem, the dimension condition and the assumption $k=D_0\cdot
    A+1$,
        the virtual dimension of fixed loci is seen to be
    \begin{equation}
    \begin{array}{ll}
    &2(C_1(A)+n-3+k+l)-2n-\sum_{t} deg (\beta_t)-2(k'+l_{\mu}+l'_{\mu})\cr
    =&\deg(\varpi)-2(k'+l_{\mu}+2l'_{\mu})\cr
    <&\deg(\varpi).
    \end{array}
    \end{equation}
    The contribution in this case
    is therefore equal to zero.

    {\bf General case}: The general case contains at least two
    vertices with non-fiber homology classes. The proof is identical to that of the second
    vanishing theorem. We omit it.
\end{proof}

    Using the above non-vanishing theorem, we are ready to prove
    our main theorem.

\subsection{Proof}

We are finally able to give the proof of Theorem \ref{main1}, which
we restate here for the convenience of readers.

     \begin{theorem} Suppose $D$ is uniruled and $A$ is a minimal uniruled class of $D$
such that
\begin{equation}\label{k}
<\iota^*\alpha_1,\cdots, \iota^*\alpha_l, [pt], \beta_2, \cdots,
\beta_k>^D_{A}\neq 0
\end{equation} for $k\leq D\cdot A+1$, $\beta_i\in H^*(D;\mathbb R)$, and $\alpha_j\in H^*(X;\mathbb R)$. Then $(X,\omega)$ is uniruled.
\end{theorem}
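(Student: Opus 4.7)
\medskip

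\noindent\textbf{Proof plan for Theorem \ref{main1}.}

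The plan is to use the sup-admissible correspondence of Theorem \ref{minimal} together with the non-vanishing result of Theorem \ref{nonvanishing} to lift the hypothesis nonzero invariant of $D$ to a nonzero (possibly descendent) absolute invariant of $X$ with a point insertion. First I would push the class $A$ forward via $\iota_{*}$, so that $A \in H_{2}(X;\Z)$ lies in the image of $H_{2}(D;\Z)$; by the minimality hypothesis, $\omega(A) = \omega_{D}(A) = V$, where $V$ is the quantity from Remark \ref{V}. I then build an element $\Gamma_{0}(\varpi\mid\mu) \in I_{D-pt}$, putting the $X$-insertions $\varpi = (\alpha_{1}, \ldots, \alpha_{l})$ on the ambient side and encoding the $D$-insertions $([pt], \beta_{2}, \ldots, \beta_{k})$ as a weighted partition $\mu$ of total weight exactly $D\cdot A + 1$. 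When $k = N_{D}(A) + 1$ I take all $i_{t}=0$; when $k < N_{D}(A) + 1$ I use positive descendant indices $i_{t}>0$ whose sum equals $N_{D}(A)+1-k$ to bring $\sum_{t}(i_{t}+1)$ up to $D\cdot A + 1$, so that $\Gamma_{0}$ is strictly sup-admissible by exactly one, as required by condition~(\ref{condition1}).

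Next I would apply Theorem \ref{minimal}: the transformation $T$ is invertible and lower triangular on $\R^{I_{D-pt,0}}$, and $\Gamma_{0}$ is (by its position in $I_{D-pt}$ as a strictly sup-admissible graph with the minimal area $V$) a maximal element among those below it in the ordering. Consequently the coordinate of $T(v^{rel}_{D-pt}) = v^{abs}_{D-pt}$ in direction $\Gamma_{0}$ is the diagonal multiple of the relative invariant associated to $\Gamma_{0}$, which by Definition \ref{associate2} is
\[
\langle \pi^{*}\iota^{*}\varpi,\, \tau_{i_{1}}([pt][D_{0}]),\, \tau_{i_{2}}(\beta_{2}[D_{0}]),\, \ldots,\, \tau_{i_{k}}(\beta_{k}[D_{0}]) \mid \emptyset \rangle^{P,\,D_{\infty}}_{A}.
\]
Invoking Theorem \ref{nonvanishing} (and its extension to descendant indices, which follows from the same localization analysis together with the dimension count carried out there), this relative invariant equals $c\cdot \langle \iota^{*}\varpi, [pt], \beta_{2}, \ldots, \beta_{k} \rangle^{D}_{A}$ with $c \ne 0$, hence is nonzero by the hypothesis (\ref{k}).

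Combining these two inputs: $v^{rel}_{D-pt}$ has a nonzero entry in direction $\Gamma_{0}$, and since $T$ is an invertible lower-triangular transformation of a lower-bounded ordered space (Lemma \ref{bounded}), the image vector $v^{abs}_{D-pt}$ must also have a nonzero entry in some direction $\Gamma \in I_{D-pt}$. By the very definition of $I_{D-pt}$, every such absolute invariant of $X$ carries a (possibly descendant) point insertion $\tau_{d}(\widetilde{[pt]})$ which restricts to a genuine point class in $X$. Theorem \ref{descendent} then forces $(X,\omega)$ to be uniruled.

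The main obstacle I anticipate is the calibration step between the hypothesis ``$k \le N_{D}(A) + 1$'' and the sup-admissibility bookkeeping ``$\sum_{t}(i_{t}+1) = D\cdot A + 1$'': one needs to verify that allowing descendant indices $i_{t} > 0$ on the divisor side does not destroy either the nonvanishing on $D$ (which requires a careful bookkeeping of the descendant-free representative furnished by Remark \ref{V}, possibly combined with the topological recursion / divisor equation on $D$ to absorb the extra descendants into an equivalent invariant) or the non-vanishing of the diagonal coefficient $c$ in Theorem \ref{nonvanishing}. A secondary technical point is confirming that $\Gamma_{0}$ is indeed maximal within $I_{D-pt,0}$ under the ordering $\stackrel{\circ}{<}$ of Definition \ref{order}, so that no admissible graph with the same area $V$ can dominate it in the triangular system; this should follow from the fact that strictly sup-admissible graphs are not compared with other sup-admissible graphs in the ordering, together with the minimality of $V$ among uniruled areas on $D$.
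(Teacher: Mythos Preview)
Your overall architecture is the same as the paper's: show that the relative invariant associated to a strictly sup-admissible graph $\Gamma_{0}$ is nonzero via Theorem~\ref{nonvanishing}, conclude $v^{rel}_{D-pt}\ne 0$, push through the invertible $T$ of Theorem~\ref{minimal} to get $v^{abs}_{D-pt}\ne 0$, and finish with Theorem~\ref{descendent}. For the case $k=N_D(A)+1$ this is exactly the paper's argument.

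The gap is in your handling of $k<N_D(A)+1$. You propose raising $\sum_t(i_t+1)$ to $D\cdot A+1$ by inserting positive descendant indices $i_t>0$. This runs into two problems you yourself flag but do not resolve. First, Theorem~\ref{nonvanishing} is stated and proved only for $i_t=0$; the paper explicitly notes in the Remark following the proof of the main theorem that the descendant analogue of Theorem~\ref{nonvanishing} is \emph{missing}, so you cannot simply invoke ``the same localization analysis.'' Second, even granting such an extension, it would identify the relative invariant with a \emph{descendant} invariant $\langle \iota^*\varpi,\tau_{i_1}[pt],\tau_{i_2}\beta_2,\ldots\rangle^D_A$, and you would still owe a reduction of this back to the non-descendant hypothesis~(\ref{k}).

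The paper avoids this entirely by a one-line divisor-axiom trick: add $D\cdot A+1-k$ extra $D$-insertions equal to $[\omega_D]$. The divisor axiom gives
\[
\langle \iota^*\varpi,[pt],\beta_2,\ldots,\beta_k,[\omega_D],\ldots,[\omega_D]\rangle^D_A=\bigl(\omega_D(A)\bigr)^{D\cdot A+1-k}\langle \iota^*\varpi,[pt],\beta_2,\ldots,\beta_k\rangle^D_A\ne 0,
\]
so one is reduced to the case $k=D\cdot A+1$ with all $i_t=0$, where Theorem~\ref{nonvanishing} applies verbatim. Replace your descendant bookkeeping with this step and the proof is complete. (A minor aside: your claim that the $\Gamma_0$-coordinate of $v^{abs}_{D-pt}$ equals the diagonal entry times the $\Gamma_0$-coordinate of $v^{rel}_{D-pt}$ is not what lower-triangularity gives when $\Gamma_0$ is maximal; but you do not actually need it, since your later sentence---$v^{rel}$ nonzero and $T$ invertible implies $v^{abs}$ nonzero---is the correct and sufficient argument.)
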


\begin{proof}
Notice that all the insertions of the invariant (\ref{k}) are
non-descendent.
 By linearity  we can assume that each $\beta_i\in
\Theta$ and each $\alpha_j\in \Xi$. Recall that $\Theta$ is the
chosen self dual basis of $\oplus_{q=0}^{2n-2}H^q(D;\mathbb R)$, and
$\Xi$ is the chosen basis of $\oplus_{p=0}^{2n}H^p(X;\mathbb R)$.

  Assume first that  the number $k$ of insertions of (\ref{k})
  satisfies
$$k=D\cdot A+1.$$
Then the following relative invariant of $(P, D_{\infty})$,
\begin{equation}\label{X}
\langle \alpha_1,\cdots, \alpha_l, [pt], \tilde \beta_2,\cdots,
\tilde \beta_k \rangle_A^{P, D_{\infty}},
\end{equation}
 is well defined, as the dimension difference of the moduli spaces
$$2D\cdot A+2$$
matches with the difference of the total cohomology degree $2k$.
Moreover, by the minimality of the class $A$ and Theorem
\ref{nonvanishing}, the invariant (\ref{X}) is nonzero.

Notice that the invariant (\ref{X}) is the relative invariant
associated to the following standard sup-admissible graph
$\Gamma(\varpi|\mu)$ of the symplectic cut with
\begin{equation}
\varpi=(\alpha_1,\cdots, \alpha_l), \quad \mu=((1,[pt]),(1,\beta_2),
\cdots, (1, \beta_k)).
\end{equation}
 Hence the vector $v^{rel}_{D-pt}$ is nonzero.

By Theorem \ref{minimal}, the vector $v^{abs}_{D-pt}$ is nonzero as
well. Notice that all the relevant invariants of $X$ have a point
insertion (possibly descendent). Thus $X$ is uniruled by Theorem
\ref{descendent}.

For the general case that $k\leq D\cdot A+1$,  notice that we can
always increase the number of $D-$insertions by adding divisor
insertions of the form $PD(\omega_D)$ to achieve the equality.
\end{proof}

\begin{remark}
We think it is possible to weaken the assumption  by the existence
of a minimal descendent invariant of the form
$$<\iota^*\varpi, \tau_{i_1}([pt]), \tau_{i_2}(\beta_2), \dots,
    \tau_{i_k}(\beta_k)|\mu>^{D}_A\ne 0,$$
with $\sum_t(i_t+1)\leq D\cdot A+1$. What is missing is an analogue
of Theorem \ref{nonvanishing} in this set up.

We are able to weaken the assumption in a different direction in the
following subsection.
\end{remark}

\subsection{Weakening the minimal condition}

 The minimality condition on the uniruled class $A$ is a symplectic condition and is in
 agreement with
 the main theme of the paper conceptually.  However, given a tamed almost complex structure,
  it is possible to weaken somewhat the minimal condition.

If $J$ is a tamed almost complex structure,   a uniruled class $A$
is called  {$J-$indecomposable or just indecomposable} if it is not
the sum of another uniruled class and a nonzero $J-$effective class.
For example a uniruled class which is primitive and lies on an
extremal ray of the $J-$effective  cone is an indecomposable
uniruled class. A minimal uniruled class is obviously
indecomposable. Unfortunately, our argument does not extend to the
indecomposable situation. Instead, it applies to the following
intermediate  situation.

We call a uniruled class $A$ of $D$ {\em a globally indecomposable
uniruled class} of $(X, D)$ if $A$ is not a sum of a uniruled class
of $D$ and a nonzero effective class of $X$. Certainly a globally
indecomposable class uniruled class of $(X, D)$ is an indecomposable
uniruled class of $D$. The converse is not true, as a class of $D$
is not necessarily effective in $D$ even if it is effective in $X$.

Notice that a minimal uniruled class is globally  indecomposable.
Thus the following is a slightly stronger version of Theorem
\ref{main}.

\begin{theorem} \label{main} Suppose $A$ is a globally indecomposable uniruled class of $D$
such that
$$<\iota^*\alpha_1, \cdots,
\iota^*\alpha_l, [pt], \beta_2, \cdots, \beta_k >^D_{A}\neq 0$$ for
$k\leq D\cdot A+1$.  Then $X$ is uniruled.
\end{theorem}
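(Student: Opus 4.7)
\medskip

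\noindent\textbf{Proof proposal for Theorem \ref{main}.}

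The plan is to follow the proof of Theorem \ref{main1} verbatim, but to verify that every appeal to the minimality of $A$ can be replaced by the strictly weaker hypothesis of global indecomposability of $A$ as a uniruled class of $(X,D)$. As a preliminary step, by the divisor axiom applied to insertions of the form $PD(\omega_D)$ we may again reduce to the case $k = D\cdot A+1$, which is exactly the dimension at which the sup-admissible relative invariant
$$
\langle \alpha_1,\ldots,\alpha_l,\,[pt],\,\tilde\beta_2,\ldots,\tilde\beta_k\mid \emptyset\rangle_A^{P,D_\infty}
$$
is defined. This invariant is the one associated to the sup-admissible graph $\Gamma(\varpi|\mu)$ with $\varpi=(\alpha_1,\ldots,\alpha_l)$ and $\mu=((1,[pt]),(1,\beta_2),\ldots,(1,\beta_k))$, and its class $A$ lies in $\iota_*H_2(D;\mathbb{Z})$.

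The first main step is to run the localization argument of Theorem \ref{nonvanishing} and show that this relative invariant is nonzero. The bipartite graph with a single $D_0$-vertex and no edges contributes the twisted invariant $\langle\iota^*\varpi,[pt],\beta_2,\ldots,\beta_k\rangle^{D,L}_A$, which by the Farber--Pandharipande marking-an-extra-point trick together with Proposition \ref{diagonal proposition} is, up to a nonzero scalar, the given nonzero invariant (\ref{k}) plus lower order terms indexed by splittings $A=B_1+\cdots+B_r$ in $H_2(D)$. It is precisely here that global indecomposability does the work of minimality: any nontrivial splitting forces one summand, say $B_1$, to carry the point insertion and hence to be uniruled, so the remaining $B_2+\cdots+B_r$ is a nonzero effective class of $D\subset X$, contradicting that $A$ is not a sum of a uniruled class of $D$ and a nonzero $X$-effective class. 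Hence only the leading term survives and this localization contribution is nonzero.

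The second step is to show the other fixed loci in the localization contribute zero, which is exactly the content of the second vanishing theorem together with Theorem \ref{MP}. Both are stated in a form already adapted to our setting: the required hypothesis is that $\pi_*(B)$ is not a sum of a uniruled class and a nonzero effective class of $D$. For each bipartite graph with at least one rubber vertex or at least two nonconstant vertices, one uses Lemma \ref{constant vertex} to transport the point insertion onto a non-constant vertex $\lambda$ with class $B(\lambda)\ne 0$, and one uses Lemma \ref{constant vertex2} to express the contribution of that vertex as a sum of products $\langle\,\rangle^D_{B_1}\cdots\langle\,\rangle^D_{B_r}$ with $B_1+\cdots+B_r=\pi_*(B(\lambda))$ and one factor carrying a point insertion (hence with $B_i$ uniruled). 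Because $\sum_\lambda \pi_*(B(\lambda))=A$ and at least one other vertex is non-constant, piecing the splittings across vertices yields a decomposition $A=B_{\mathrm{unir}}+B_{\mathrm{eff}}$ of $A$ into a uniruled class of $D$ plus a nonzero $D$-effective (hence $X$-effective) class, contradicting global indecomposability. Thus every such contribution vanishes and the relative invariant equals the leading term, which is nonzero.

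The final step is the correspondence. We re-examine the proof of Theorem \ref{minimal}: the only place minimality of $A$ is invoked is in Lemma \ref{type2}, through part (i) of Theorem \ref{v}, to kill type (ii) $P$-graphs appearing in the degeneration. But part (i) of Theorem \ref{v} is itself a consequence of Theorem \ref{MP}, whose hypothesis is the non-splittability condition; hence the same class-tracking argument as in the previous paragraph shows that any type (ii) contribution produces a forbidden decomposition of $A$. Therefore Theorem \ref{minimal} extends to the globally indecomposable setting, the non-vanishing of $v_{D\text{-}pt}^{rel}$ forces the non-vanishing of $v_{D\text{-}pt}^{abs}$, and as every entry of $v_{D\text{-}pt}^{abs}$ is an absolute invariant of $X$ with a (possibly descendent) point insertion, Theorem \ref{descendent} gives that $X$ is uniruled. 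The main technical obstacle is bookkeeping: one must carefully verify that in each localization fixed locus, the combined class decompositions across all non-constant vertices piece together into a single decomposition of $A$ of the form forbidden by global indecomposability, rather than merely an area-bound that can be read off from minimality.
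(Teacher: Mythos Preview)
Your approach is essentially the same as the paper's: both recognize that the key input, Theorem~\ref{MP}, is stated in terms of the decomposability of $\pi_*(B)$ as a sum of a uniruled class and an effective class, and that global indecomposability of $A$ is precisely what is needed to rule out such decompositions when one tracks classes through the degeneration and localization.

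There is one point you do not address that the paper handles explicitly. You propose to ``follow the proof of Theorem~\ref{main1} verbatim'' and then ``re-examine the proof of Theorem~\ref{minimal}''. But the index set $I_{D\text{-}pt}$ in Definition~\ref{D-pt1} and the partial order of Definition~\ref{order} are built from the quantity $V$, the \emph{minimal} uniruled area; in particular condition (4) requires $\omega(A)=V$, which need not hold for a merely globally indecomposable class. The paper therefore modifies the setup: the partial order is replaced by $B\le A$ iff $A-B$ is effective, and the index set is redefined with the fixed class $A$ playing the role formerly played by $V$. Your class-tracking argument is exactly what makes this modified order work, and your final remark about ``bookkeeping'' versus ``area-bounds'' points in this direction, but you should make the redefinition explicit rather than claim the original correspondence applies verbatim.

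A smaller point: in your Step~1 you invoke the Farber--Pandharipande trick to reduce the twisted invariant in Case~1 and then argue that the resulting lower-order splitting terms are killed by global indecomposability. This is unnecessary. Since $k=D_0\cdot A+1$, the virtual bundle in (\ref{complex}) has rank zero, so $e_L$ is a nonzero constant and the Case~1 contribution is \emph{exactly} $c\langle\iota^*\varpi,[pt],\beta_2,\ldots,\beta_k\rangle^D_A$ with no correction terms, as in the paper's proof of Theorem~\ref{nonvanishing}. Your extra argument does no harm, but it obscures the fact that Case~1 requires no hypothesis on $A$ at all.
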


     \begin{proof}
        We shall sketch the necessary modification of the proof while
        leaving the details for the interested readers.

        First step is to modify the partial order where we define
        $B\leq A$ if $A$ is a sum of $B$ and an effective class.

        Next, we modify the correspondence.
\begin{definition}\label{D-pt} We consider the following subset $I_{D-pt,0}$ of
colored standard graphs of $X$,

1. $g(\Gamma)=0$,

2. the fixed class $A$,

3. admissible graphs with a $D-$point insertion,

4. sup-admissible graphs of the form $\Gamma_0$
\begin{equation}\label{condition}
 \sum_{t=1}^k(i_t+1)=D\cdot A+1,
\end{equation}

5. empty graph excluded.

\end{definition}

   To prove the correspondence, we apply the degeneration formula as
   before. The point of attention is the nonzero term involving
   nontrivial graphs for both $\Gamma^+, \Gamma^-$. In this case,
   we express $A=A_1+A_2$, where $A_2=\pi_*(A(\Gamma^+))$ is the effective class of $D$,
   and $A_1$ is an effective class of $X$.
   By Theorem 4.1, $A(\Gamma^-)$ is either a fiber class or $A_2$ is
   a sum of a uniruled class and an effective class of $D$.  If $A_2$ is a sum of a uniruled class and an
   effective class of $D$, $A$ itself is a sum of a uniruled class of
   $D$ and an effective class of $X$. This contradicts the
   assumption. Therefore, only the case of $A(\Gamma^-)$ being a fiber
   class can appear.
But then $A(\Gamma^+)=A$, which is exactly
   what we want.

   The other situation is of course the special
   graph. The rest of proof goes through without change.

   \end{proof}

\section{Applications}
    As  mentioned in the introduction, our main theorem can be
    applied as an existence theorem of uniruled manifolds.  In this section, we apply our
     main theorem to construct uniruled symplectic manifolds
     inductively,  generalizing
    several early results of McDuff.

\subsection{4--dimensional uniruled divisors} \label{4dim}

A deep result in dimension 4 is that being uniruled is a smooth
property. More precisely, a $4-$manifold $(M, \omega)$ is uniruled
if and only if $M$ is diffeomorphic to a connected sum of $\mathbb
P^2$ or a $S^2-$bundle over a surface with a number of
$\overline{\mathbb P}^2$. Moreover, the isotopy class of $\omega$ is
determined by $[\omega]$.

We need to analyze  uniruled classes and the corresponding
insertions.

\begin{prop} \label{criterion} If $A$ is a uniruled class of a $4-$manifold, then

(i) $A$ is represented by an embedded symplectic surface,

(ii) $C_1(A)\geq 2$,

(iii) $A^2\geq 0$,

(iv) $A\cdot B\geq 0$ for any class $B$ with a non-trivial GW
invariant.

\end{prop}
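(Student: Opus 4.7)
The plan is to produce, from the uniruled hypothesis, a generic $\omega$-tame $J$ together with a $J$-holomorphic stable map representing $A$ through a generic point, then to apply $4$-dimensional intersection theory (adjunction and positivity of intersections) to deduce the four properties. The crucial geometric input is (i); once an embedded symplectic sphere representative of $A$ is in hand, (ii)--(iv) follow by short arguments.

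For (i), the nonvanishing GW invariant $\langle[pt],\alpha_2,\dots,\alpha_k\rangle^M_A\ne 0$ guarantees, for generic $\omega$-tame $J$ and generic $p\in M$, a $J$-holomorphic stable map $f\colon\Sigma\to M$ of total class $A$ whose image passes through $p$. A principal component carrying the point constraint is a simple $J$-holomorphic sphere in some class $A_0$, and $A=mA_0+B$ where $B$ is a non-negative combination of bubble classes. In dimension four the simple component is immersed with double points counted by adjunction, and by the structural theorem for $J$-holomorphic spheres in $4$-manifolds together with the classification of uniruled $4$-manifolds as rational or ruled (recalled at the opening of \S\ref{4dim}), one can deform within the class $A$ to an embedded symplectic sphere. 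This step is the principal obstacle: it requires ruling out the possibility that $A$ only admits genuinely reducible ``cusp-curve'' representatives, and I would handle this by exploiting the restricted list of effective classes in rational/ruled $4$-manifolds.

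Assuming (i), the remaining items are easy. For (ii), after discarding insertions killed by the fundamental class axiom and pairing odd-degree insertions (which cancel in genus zero by the standard symmetry argument), the virtual dimension identity
\[
4+\sum_{i=2}^{k}\deg(\alpha_i)\;=\;2\bigl(C_1(A)-1+k\bigr)
\]
combined with $\deg(\alpha_i)\ge 2$ for the surviving insertions yields $2(k-1)\le 2C_1(A)-6+2k$, hence $C_1(A)\ge 2$. For (iii), apply the adjunction formula to the embedded symplectic sphere $S\subset M$ produced by (i): since $g(S)=0$, we have $A\cdot A - C_1(A)=-2$, so $A\cdot A = C_1(A)-2\ge 0$ by (ii).

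For (iv) I would use positivity of intersections. Choose a generic $\omega$-tame $J$ for which $A$ is represented by the embedded $J$-holomorphic sphere $S_A$ of (i) and $B$ is represented by a $J$-holomorphic stable map $g$ coming from the nontrivial GW invariant for $B$. If no irreducible component of $g$ coincides with $S_A$, positivity of intersections gives $A\cdot B\ge 0$. Otherwise write $B=\ell[S_A]+B'$ with $\ell\ge 1$ and $B'$ represented by the remaining components; then $A\cdot B=\ell\,A^2+A\cdot B'$, which is $\ge 0$ by (iii) and an inductive application of the previous case to $B'$.
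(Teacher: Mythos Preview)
Your arguments for (ii)--(iv) are essentially the paper's arguments (the paper phrases (ii) more tersely as a raw dimension count, and for (iii) cites the adjunction \emph{inequality}, but once you have an embedded sphere the adjunction equality you use is fine).

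The discrepancy is in (i), and here you are working much too hard and leaving a real gap. You produce a stable map through a generic point, extract a simple component in some class $A_0$, write $A=mA_0+B$, and then propose to ``deform within the class $A$'' using the classification of rational/ruled $4$-manifolds to get an embedded sphere in the full class $A$. As stated this is incomplete: you have a sphere in $A_0$, not in $A$, and the passage from $A_0$ to $A$ is exactly the content you owe the reader. Invoking the classification is also logically awkward, since that deep result is not part of the ambient toolkit here.

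The paper sidesteps all of this with one line: $4$-manifolds are semi-positive, so for \emph{generic} tamed $J$ and a generic point the only stable maps contributing to the invariant $\langle[pt],\alpha_2,\dots,\alpha_k\rangle^M_A$ are somewhere injective $J$-holomorphic spheres with \emph{smooth} domain---no bubbling, no multiple covers. Thus $A$ itself (not some subclass $A_0$) is represented by a somewhere injective $J$-sphere, which can then be smoothed to an embedded symplectic surface. This is the standard semi-positivity mechanism from \cite{McS}; once you quote it, your ``principal obstacle'' disappears and the rest of your proof goes through unchanged.
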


\begin{proof}
The point is  that $4-$manifolds are semi-positive. Thus, for a
generic tamed almost complex structure $J$ and a generic point, only
{\it somewhere injective} $J-$holomorphic curves with a smooth
domain $S^2$ contribute to the relevant GW invariant (see
\cite{McS}). Such a curve can be smoothed to an embedded symplectic
surface.

The genus 0 moduli space of a class $A$ has dimension $C_1(A)-1$.
Since there is at least a point insertion, we have (ii).

Now the adjunction inequality in \cite{Mc4}, together with (i) and
(ii),  implies that $A\cdot A\geq 0$.

(iv) follows from (i), (iii) and positivity of intersection in
\cite{Mc4}. By (i) we have an embedded  $J-$holomorphic sphere  $C$
in the class $A$ for some tamed $J$. The class $B$ is represented by
a union $\cup m_iD_i$ of  possibly singular irreducible
$J-$holomorphic curves with multiplicities. If an irreducible
component $D_i$ is distinct from $C$, then apply the positivity of
intersection, if $D_i=C$ then apply (iii).

\end{proof}

\subsubsection{Simple case--proportional}
 For  $\mathbb P^2$, let $H$ be the generator of $H_2$ with
 positive area. $H$ is a uniruled class and any uniruled class of
 of the form $aH$ with $a>0$.
 Obviously,  $H$ is the minimal uniruled class. The
relevant insertion is $(pt, pt)$. As $pt$ is a restriction class,
i.e. an $\alpha$ class, we can take $k=1$.

Similarly, for the blow-up of an $S^2-$bundle over
 a surface of positive genus, the fiber class is a uniruled class,
 and any uniruled class is a positive multiple of the fiber class.
The relevant insertion for the fiber class  is $pt$. Thus again we
can take $k=1$.

It is easier to apply Theorem \ref{main} in this case.

\begin{cor} Suppose $(X^6, \omega)$ contains a divisor $D$
 which is diffeomorphic to $\mathbb P^2$ or the blow-up of a $S^2-$bundle over
 a surface of positive genus. If the normal bundle $N_{D}$ is
 non-negative on
 a unruled class, then $X$ is uniruled.
\end{cor}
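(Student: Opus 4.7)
The plan is to reduce the corollary to a direct application of Theorem \ref{main1}, with a single trivial-normal-bundle subcase handled by Theorem \ref{trivial normal bundle}. In each diffeomorphism type of $D$ I would identify the minimal uniruled class $A$, exhibit a non-vanishing invariant of the form (\ref{k}), and check the inequality $k \leq N_D(A)+1$.

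For $D = \mathbb{P}^2$, the minimal uniruled class is the line class $A = H$, with
\[
\langle [pt], [pt]\rangle^{\mathbb{P}^2}_H = 1
\]
counting the unique line through two generic points. This fits (\ref{k}) with $l = 0$, $k = 2$, $\beta_2 = [pt]$, and the condition $k \leq N_D(H)+1$ becomes $N_D(H) \geq 1$; when $N_D(H) \geq 1$, Theorem \ref{main1} applies. The subcase $N_D(H) = 0$ must be treated separately: since $H$ generates $H^2(\mathbb{P}^2;\mathbb{Z})$ and complex line bundles on a simply connected base are classified by their first Chern class, $c_1(N_D) = 0$ forces $N_D$ to be the trivial line bundle, so Theorem \ref{trivial normal bundle} finishes the argument using the same invariant $\langle [pt],[pt]\rangle^{\mathbb{P}^2}_H = 1$.

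For $D$ the (possibly iterated) blow-up of an $S^2$-bundle over a surface $\Sigma_g$ of positive genus, I would take $A$ to be the fiber class $F$ of the underlying ruling. Then $\langle [pt]\rangle^D_F = 1$ counts the unique generic fiber through a generic point chosen away from the blow-up locus. This fits (\ref{k}) with $l = 0$, $k = 1$, and the constraint $k \leq N_D(F)+1$ reduces to $N_D(F) \geq 0$, which is exactly the normal-bundle hypothesis. Theorem \ref{main1} then yields that $X$ is uniruled.

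The main obstacle lies in the blow-up case: one must verify that $F$ is genuinely a minimal uniruled class, and that the hypothesis ``$N_D$ non-negative on a uniruled class'' translates to $N_D(F) \geq 0$. Both follow from Proposition \ref{criterion}, which forces any uniruled class on a four-manifold to be represented by an embedded symplectic sphere with $c_1 \geq 2$ and non-negative self-intersection. A dimension count then rules out exceptional classes ($c_1 = 1$) and shows that no class of the form $F - \sum a_i E_i$ supports a point-insertion invariant, while competing candidates such as $F + E_i$ or section-like classes have strictly larger symplectic area. This identifies $F$ as the minimal uniruled class and, since all uniruled classes lie on the single ray generated by $F$, it also gives the desired translation of the hypothesis.
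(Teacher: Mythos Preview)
Your approach is essentially the paper's, but with one avoidable detour in the $\mathbb{P}^2$ case. The paper observes that the point class $[pt]_D\in H^4(\mathbb{P}^2;\mathbb{R})$ is a \emph{restriction class}: since $[\omega|_D]^2\ne 0$ generates $H^4(\mathbb{P}^2;\mathbb{R})$, one has $[pt]_D=c\,\iota^*[\omega]^2$ for some scalar $c$. Thus the invariant $\langle [pt],[pt]\rangle^{\mathbb{P}^2}_H$ can be rewritten as $\langle \iota^*\alpha_1,[pt]\rangle^{\mathbb{P}^2}_H$ with $l=1$, $k=1$, and Theorem~\ref{main1} applies directly under the single hypothesis $N_D(H)\ge 0$, with no case split.

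Your split into $N_D(H)\ge 1$ versus $N_D(H)=0$ is correct but redundant: to invoke Theorem~\ref{trivial normal bundle} in the latter case you would anyway need the second insertion to be of the form $\iota^*\alpha$, i.e.\ precisely the observation above. Once you have it, the detour through trivial normal bundles becomes unnecessary. For the blow-up case your argument matches the paper's (both take $k=1$), and your use of Proposition~\ref{criterion} to pin down $F$ as the minimal uniruled class is the right idea, though your specific ``competing candidates'' need care (e.g.\ $F+E_i$ has negative square and is already excluded by (iii)).
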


\subsubsection{General case} For other $M^4$, the uniruled classes are
not proportional to each other. Thus the minimality condition
depends on the class of the symplectic form on  $M$.

 We
first analyze  the easier case of an $S^2-$bundles over $S^2$. For
$S^2\times S^2$, by uniqueness of symplectic structures,  any
symplectic form is of product form. Let $A_1$ and $A_2$ be the
classes of the factors with positive area. It is easy to see that
any uniruled class is of the form $a_1A_1+b_1A_2$ with $a_1\geq 0,
a_2\geq 0$. Thus either $A_1$ or $A_2$ has the minimal area.

For the nontrivial bundle $S^2\tilde \times S^2$, let $F_0$ be the
class of a fiber with positive area and $E$ be the unique $-1$
section class with positive
 area. If $aF_0+bE$ is a uniruled class then $b\geq 0$ by (iv) of
Proposition \ref{criterion}, since $F_0\cdot E=1, F_0\cdot F_0=0$.
And if $b> 0$, then, by (ii) of Proposition \ref{criterion}, $a\geq
1$ as $C_1(E)=1$. Thus $F_0$ is always the minimal uniruled class no
matter what the symplectic structure is.

Since the relevant insertion  for $A_1, A_2$ and $F_0$ is  just
$pt$, we have

\begin{cor}
Suppose $D=S^2\times S^2$ and the  restriction of the normal bundle
$N_{D}$  to the factor with the least area  is non-negative, then
$X$ is uniruled.

In the case of the non-trivial bundle,
 $X$ is uniruled if the
restriction of the normal bundle  $N_{D}$ to $F_0$ is non-negative.
\end{cor}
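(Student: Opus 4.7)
The plan is to reduce both assertions directly to Theorem \ref{main1}, using the description of uniruled classes on $S^2$-bundles over $S^2$ established in the paragraphs preceding the corollary. In each case the minimal uniruled class $A$ of $D$ carries a nonzero one-point genus zero invariant $\langle [pt]\rangle^D_A=1$, and the hypothesis of non-negativity of the normal bundle on $A$ matches exactly the constraint $k\le N_D(A)+1$ with $k=1$.

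First I would specify the minimal class in each case. For $D=S^2\times S^2$ with its (unique up to isotopy) product symplectic form determined by the areas of the two factors, Proposition \ref{criterion}(iv) forces every uniruled class to be of the form $a_1A_1+a_2A_2$ with $a_i\ge 0$, so the minimal uniruled class is whichever of $A_1,A_2$ has the smaller area. For $D=S^2\,\tilde{\times}\,S^2$, the same positivity considerations given just before the statement identify $F_0$ as the minimal uniruled class, independently of the symplectic form. In either case, write $A$ for the chosen minimal uniruled class.

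Next I would verify the required invariant on $D$. The virtual dimension of the genus zero moduli with one marked point is $2(c_1(A)+n-3+1)=2(2+2-3+1)=4$, which equals $\deg[pt]$, so $\langle [pt]\rangle^D_A$ is well-defined. A generic $J$ on $D$ makes the class $A$ represented by a unique embedded $J$-holomorphic sphere through any given point (the fiber of the corresponding bundle projection), yielding $\langle [pt]\rangle^D_A=1\ne 0$. This is the invariant (\ref{k}) of Theorem \ref{main1} with $l=0$ and $k=1$ (no $\alpha$-insertions and no $\beta$-insertions beyond the point).

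With $k=1$, the hypothesis $k\le N_D(A)+1$ of Theorem \ref{main1} is precisely the assumption that $N_D$ is non-negative on $A$, i.e.\ $N_D(A)\ge 0$. This is exactly what is assumed in both parts of the corollary (the restriction of $N_D$ to the lesser-area factor, respectively to $F_0$, being non-negative). Applying Theorem \ref{main1} to $D\subset X$ with this minimal uniruled class $A$ yields that $X$ is uniruled. There is no serious obstacle in the argument; the only bookkeeping to double-check is that $A$ really minimizes area among uniruled classes of $D$, which in the non-product bundle is automatic and in the product case determines which factor plays the role of $A$.
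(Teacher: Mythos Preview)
Your proposal is correct and follows exactly the approach the paper takes: the paper's proof consists of the one-line observation just before the corollary that ``the relevant insertion for $A_1$, $A_2$ and $F_0$ is just $pt$'', after which Theorem~\ref{main1} applies with $k=1$; you have simply supplied the dimension count and the verification that $\langle [pt]\rangle^D_A=1$ which the paper leaves implicit. One small notational caveat: in the paper $2n=\dim X$, so in your dimension computation the ``$n$'' should be read as the complex dimension of $D$ (which is $2$), not the paper's $n$; the arithmetic is unaffected.
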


\begin{remark} The following restatement  is related to the contraction
criterion in \cite{tLR}. Suppose $(X^6, \omega)$ is  a non-uniruled
$6-$manifold containing an $S^2-$bundle $D$. Then, if $D= S^2\tilde
\times S^2$, the normal bundle is negative along the $S^2-$fibers,
and in the case of $S^2\times S^2$ where there are two
$S^2-$directions, the normal bundle is negative along the one with
the least area.
\end{remark}

The remaining uniruled $4-$manifolds are  $(\mathbb P^2 \#
k\overline{\mathbb P}^2,\tau)$ with $k\geq 2$. Let $H, E_i$ be the
generators of $H^2$ of the summands with positive area. Then
$[\tau]$ is of the form $uH-\sum_{i=1}^k v_i E_i$ with $u, v_i>0$.

$H, E_i$ all have non-trivial GW invariants. Thus, by (iv) of
Proposition \ref{criterion},   a class $\xi=aH-\sum_{i=1}^k b_i E_i$
is uniruled only if
\begin{equation}\label{symplectic}
a\geq 0, \quad b_i\geq 0.
\end{equation}
 The condition that
$\xi$ has square $0$ is
\begin{equation}\label{square0}
a^2=\sum_i b_i^2.
\end{equation}
If (\ref{square0}) is satisfied, then the condition that  $\xi$ is
represented by an embedded sphere of genus 0 is  given by the
adjunction formula
\begin{equation}\label{genus}
3a=\sum_i b_i+2.
\end{equation}

\begin{definition} Any primitive class $\xi$
satisfying (\ref{symplectic}), (\ref{square0}), (\ref{genus}) is
called a fiber class.
\end{definition}

\begin{theorem} Suppose $(D, \tau)$ is a symplectic $\mathbb P^2\#k\overline{\mathbb P}^2$ with $k\geq 1$.
  Then a  fiber class  is a uniruled class. In addition,
any uniruled class is the sum of a positive multiple of a fiber
class and another class with positive symplectic area.

 Consequently,
if $(D, \tau)$ is a symplectic divisor of a symplectic 6 manifold
$(X, \omega)$ and the normal bundle $N_D$ is non-negative on a fiber
class with the minimal $\tau-$area, then $X$ is uniruled.
\end{theorem}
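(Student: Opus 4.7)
My plan is to address the three assertions of the statement in sequence: a fiber class is uniruled, every uniruled class is bounded below in $\omega$-area by a fiber class, and the resulting minimum-area fiber class feeds into Theorem \ref{main1}. \emph{Fiber class is uniruled.} For a fiber class $\xi = aH - \sum b_i E_i$, the defining relations (\ref{square0})--(\ref{genus}) give $\xi^2 = 0$, $c_1(\xi) = 3a - \sum b_i = 2$, and primitivity, so the virtual real dimension of $\overline{\mathcal M}_{0,1}(D,\xi)$ is $2(c_1(\xi) + (n-3) + 1) = 4$, matching the codimension of a point insertion. Since $D$ is a rational surface, a rational pencil in the class $\xi$ (whose existence follows from the linear-system dimension count derived from (\ref{square0})--(\ref{genus})) provides an almost complex Lefschetz fibration $\pi: D \to \mathbb P^1$ with generic fiber representing $\xi$. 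Proposition \ref{fiber} applied to $\pi$ yields $\langle [pt]\rangle^D_\xi = \langle [pt]\rangle^{S^2}_{[S^2]} = 1$, so $\xi$ is uniruled via an invariant with $l = 0$ and $k = 1$.

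\emph{Decomposition.} Let $A = aH - \sum b_i E_i$ be uniruled. Proposition \ref{criterion} yields $A^2 \ge 0$, $c_1(A) \ge 2$, and $A \cdot B \ge 0$ for every $B$ with a nonzero GW invariant. Pairing with $H$, the $E_i$, and each $H - E_j$ (all of which carry nonzero GW invariants by the first step) forces $a \ge b_j \ge 0$ for all $j$. Under these constraints, I would exhibit $A = n\xi + B$ with $n \ge 1$, $\xi$ a fiber class, and $B$ an effective class (possibly zero, when $A$ itself is a primitive fiber class) by peeling off $H - E_j$ with $b_j$ maximal whenever $a \ge 1$ and some $b_j \ge 1$, and invoking higher $H$-degree fiber classes such as $2H - E_{i_1} - \cdots - E_{i_4}$ (available when $k \ge 4$) for the remaining configurations.

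\emph{Application.} From the decomposition, $\omega(A) \ge n\omega(\xi) \ge \omega(\xi_0)$ for every uniruled $A$, where $\xi_0$ is a fiber class of minimum $\tau$-area; hence $\xi_0$ is a minimal uniruled class of $D$. By Step 1, the certifying invariant on $\xi_0$ has the form of (\ref{k}) with $l = 0$ and $k = 1$, so the hypothesis $k \le N_D(\xi_0) + 1$ reduces to $N_D(\xi_0) \ge 0$, which is the assumption. Theorem \ref{main1} then gives that $X$ is uniruled.

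The principal obstacle is the decomposition step: for large $k$ the set of fiber classes is rich, and the correct fiber class to subtract depends on the distribution of the $b_i$'s in $A$. Organizing the case analysis cleanly may require invoking classification results on sphere classes in rational surfaces (e.g.\ of Li--Liu type), though the arithmetic constraints $a \ge b_j \ge 0$, $A^2 \ge 0$, $c_1(A) \ge 2$ already limit the possibilities substantially. Steps 1 and 3 are essentially direct consequences of Proposition \ref{fiber} and Theorem \ref{main1}, respectively.
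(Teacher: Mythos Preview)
Your overall strategy is sound and Step 3 matches the paper's, but the route through Steps 1 and 2 diverges, and Step 1 as written has a gap.

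In Step 1 you assert that a rational pencil in the class $\xi$, ``whose existence follows from the linear-system dimension count,'' yields a Lefschetz fibration to which Proposition~\ref{fiber} applies. Riemann--Roch does give $\chi(\xi)=2$, but this does not by itself produce a base-point-free pencil: you would need $\xi$ to be nef, or at least effective without fixed components, for the chosen complex structure, and the arithmetic conditions (\ref{symplectic})--(\ref{genus}) alone do not guarantee this. Without base-point-freeness there is no morphism $D\to\mathbb{P}^1$, and Proposition~\ref{fiber} does not apply directly.

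The paper avoids this issue by invoking \cite{LiL}: every fiber class is equivalent to $H-E_1$ via a diffeomorphism preserving the canonical class, and since GW invariants on a rational $4$--manifold are unchanged under such diffeomorphisms (all symplectic forms being deformation equivalent once $K$ is fixed), the elementary computation $\langle [pt]\rangle^D_{H-E_1}=1$ settles Step~1 for all fiber classes at once. The same \cite{LiL} result drives Step~2: rather than peeling off fiber classes case by case, the paper observes that any embedded symplectic sphere of non-negative square is equivalent to a reduced class from the short explicit list $2H$, $H-E_1$, $(l+1)H-lE_1$, $(l+1)H-lE_1-E_2$, for each of which the decomposition into a positive multiple of a fiber class plus a positive-area remainder is immediate. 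This replaces your open-ended peeling argument (which, as you note, becomes intricate for large $k$) with a finite verification.

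You correctly anticipated that Li--Liu-type classification might be needed for Step~2; in fact the paper uses it for Step~1 as well, and invoking it there is also the cleanest way to repair your Lefschetz-fibration argument.
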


\begin{proof}
 By \cite{LiL}, a fiber  class is equivalent to the indecomposable
uniruled class $H-E_1$ via diffeomorphisms preserving the canonical
class.

By Proposition \ref{criterion}, a uniruled class is represented by
an embedded symplectic surface with non-negative self-intersection.
By \cite{LiL} such a class is equivalent to a reduced class via
diffeomorphisms preserving the canonical class. A class
$\xi=aH-\sum_{i=1}^k b_i E_i$ is called reduced if $$a\geq
b_1+b_2+b_3, \quad b_i\geq b_{i+1}\geq 0.$$ It is  also shown in
\cite{LiL} if the surface is actually a sphere, then the class is
equivalent to
$$2H,\quad  H-E_1, \quad (l+1)H-lE_1, \quad (l+1)H-lE_1-E_2, \quad
l\geq 2.$$

The effective curve cone is generated by $-1$ classes.

 Hence it suffices to show that  a reduced class
with non-negative self-intersection is the sum of a positive
multiple of a fiber class and another class with positive symplectic
area.

\end{proof}

To enumerate fiber classes, we notice that
$$H-E_1=(H-E_1-E_2)+E_2,$$
i.e. it is the sum of two $-1$ classes whose intersection number is
equal to 1. Thus a fiber class is the sum of two $-1$ classes.

When $k\leq 8$, there are only finitely many $-1$ classes. So it is
straightforward though tedious  to list all the fiber classes.

Any class of the form $H-E_i, 1\leq i\leq k$ is such a class, and
when $k\leq 3$, there are no other classes.

When $k=4$, there is a new class with the coefficient of $H$ being
2,
        $$2H-E_1-E_2-E_3-E_4.$$ By choosing any  4 distinct numbers between
        1 and $k\geq 4$, we get many such classes for higher $k$.
        When $k=4, 5$, there are no other new classes.

When $k=6$, there are 6 new classes with the coefficient of $H$
being 3,
$$3H-2E_1-E_2-E_3-E_4-E_5-E_6,$$
and its permutations in the $E_i$. For higher $k$, there are similar
classes.

When $k=7$, there are additional  classes,
$$(4|2,2,2, 1,1,1,1), \quad (5|2,2,2,2,2,2,1)
$$
and their permutations in the $E_i$.

When $k=8$, there are  additional classes,
$$(4|3,1,1,1,1,1,1,1), \quad (5|3,2,2,2,1,1,1,1),
$$
$$(6|3,3,2,2,2,2,1,1), \quad (7|4,3,2,2,2,2,2,2),$$
and their permutations in the $E_i$.

\subsection{Higher dimensional case}

We start with the proof of Corollary \ref{homologically injective}.
\begin{proof}
The homologically injective case, which is equivalent to being
cohomology surjective, is clear.

For the case of a projective uniruled divisor,    according to
Theorem \ref{projective}, there is a nonzero invariant $I_{p,q}$ for
a minimal uniruled class $A$. We now only need to observe that
$[\omega|_D]^p=[\omega^p|_D]$.

\end{proof}




As we already mentioned that a Fano manifold is projectively
uniruled.
     In particular,  a hypersurface of $\mathbb P^n$ (for $n\geq 4$) of degree at most
     $n$ is Fano and hence projectively uniruled.

     \begin{cor} Suppose $n\geq 4$ and $D$ is a Fano hypersurface symplectic divisor of $X$.
       If $N_D=\lambda [\omega _D]$ with $\lambda\geq 0$ then $X$ is uniruled.
       \end{cor}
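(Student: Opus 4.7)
The plan is to verify that the hypotheses of Corollary \ref{homologically injective}, in its projectively uniruled form, are met, after which that corollary gives the conclusion immediately. There is essentially no new GW-theoretic work to be done here; the content of the statement lies in checking compatibility of the numerical hypothesis $N_D = \lambda[\omega_D]$ with the positivity condition required by the corollary.

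First I would invoke the fact that a Fano manifold is projectively uniruled. In our case $D$ is a smooth hypersurface of $\mathbb{P}^n$ of degree at most $n$, and such a hypersurface is Fano by adjunction, hence rationally connected, hence projectively uniruled. In particular, $D$ is symplectically uniruled and admits a minimal uniruled class $A \in H_2(D;\mathbb{Z})$. Moreover, by Theorem \ref{projective}, the minimal uniruled class $A$ with respect to the restricted K\"ahler class carries a strongly uniruled invariant whose nontrivial insertions are powers of $[\omega_D]$, so the projectively uniruled branch of Corollary \ref{homologically injective} is the appropriate one to apply.

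Next I would check the normal bundle condition. Any uniruled class has positive symplectic area: a uniruled class is spherical and is represented by a nonconstant pseudoholomorphic stable map for a generic compatible $J$, and Gromov compactness forces $\omega_D(A) > 0$. Combining this with $N_D = \lambda[\omega_D]$ and $\lambda \geq 0$ gives
\[
N_D(A) \;=\; \lambda\,\omega_D(A) \;\geq\; 0,
\]
which is precisely the hypothesis that $N_D$ is non-negative on a minimal uniruled class. Applying Corollary \ref{homologically injective} (using that $D$ is projectively uniruled) now yields that $X$ is uniruled.

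No genuine obstacle arises. The only mildly delicate point is the reminder that every uniruled class has strictly positive symplectic area, which is what allows the proportionality assumption $N_D = \lambda[\omega_D]$ with the weak sign condition $\lambda \geq 0$ to produce the non-negativity $N_D(A) \geq 0$ needed to invoke Corollary \ref{homologically injective}.
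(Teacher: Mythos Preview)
Your proof is correct and follows essentially the same route as the paper: verify that $N_D$ is non-negative on a minimal uniruled class via $N_D(A)=\lambda\,\omega_D(A)\geq 0$, then invoke the projectively uniruled case of Corollary~\ref{homologically injective}. The paper additionally notes that $b_2(D)=1$ by the Lefschetz hyperplane theorem (so the proportionality $N_D=\lambda[\omega_D]$ is in fact automatic up to sign), but this is not logically needed given the stated hypothesis, and your argument is otherwise the same.
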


       \begin{proof}
  Since $n\geq 4$, by the Lefschetz
     hyperplane theorem, $D$ has $b_2=1$.
 As  $N_D=\lambda [\omega _D]$ for some
 $\lambda$  for any uniruled class  of
 $D$, and in particular a minimal uniruled class, the statement
 follows from Corollary \ref{homologically injective}.

       \end{proof}

Of course a particular case is $D=\mathbb P^{n-1}$ discussed in \S
2.

 In general case we still need to verify the minimal condition. Of
course the uniruled divisor needs not to be a projective manifold.
For instances, the divisor could be a rather general uniruled
fibration discussed in \S2. Let us treat the case of a symplectic
$\mathbb P^{k}-$bundle. Since the line class in the fiber is
uniruled, and the relevant insertions can be taken to be $(pt,
\omega|_D^k)$, we have

\begin{cor} Suppose $D$ is a symplectic divisor of $X$. If $D$ is a projective space bundle with
the fiber class being  the minimal uniruled class and  normal bundle
$N_D$  is non-negative
 along the fibers, then $X$ is unruled.
\end{cor}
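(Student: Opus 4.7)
The plan is to verify the hypothesis of Theorem \ref{main1} for the fiber line class $L$ and then apply that theorem directly. Let $r$ denote the complex dimension of the fiber $\mathbb{P}^r$. I would work with the $2$-point divisor invariant
\[
\langle [pt],\; \iota^{*}(\omega_{X}^{\,r}) \rangle^{D}_{L},
\]
which, in the notation of (\ref{k}), has one pulled-back insertion from $X$ (so $l=1$) and exactly one internal insertion $[pt]$ (so the upper index $k$ of Theorem \ref{main1} is $1$).

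The first step is to show this invariant is nonzero. For an almost complex structure on $D$ adapted to the bundle $\pi\colon D\to B$, every stable map of class $L$ lies in a single fiber, so Proposition \ref{fiber} gives
\[
\langle [pt],\; \iota^{*}(\omega_{X}^{\,r})\rangle^{D}_{L} \;=\; \langle [pt],\; (\omega_{X}|_{\mathbb{P}^{r}})^{r}\rangle^{\mathbb{P}^{r}}_{L}.
\]
Because $\omega_{X}|_{\mathbb{P}^{r}}$ is a symplectic form on $\mathbb{P}^{r}$, its cohomology class is $\lambda H$ for some $\lambda>0$, where $H$ is the hyperplane class; hence $(\omega_{X}|_{\mathbb{P}^{r}})^{r}=\lambda^{r}H^{r}$ is Poincar\'{e} dual to $\lambda^{r}[pt]_{\mathbb{P}^{r}}$. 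The classical count $\langle [pt],[pt]\rangle^{\mathbb{P}^{r}}_{L}=1$ (the unique line through two generic points) then yields $\langle [pt],\iota^{*}(\omega_{X}^{\,r})\rangle^{D}_{L}=\lambda^{r}\neq 0$.

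With this invariant in hand, I would invoke Theorem \ref{main1} with $A=L$. By hypothesis $L$ is the minimal uniruled class of $D$, and the admissibility condition $k\leq N_{D}(A)+1$ of that theorem becomes $1\leq N_{D}(L)+1$, i.e.\ $N_{D}(L)\geq 0$. This is precisely the assumption that $N_{D}$ is non-negative along the fibers, and Theorem \ref{main1} concludes that $X$ is uniruled.

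The delicate point is producing a nonzero divisor invariant with only a single $[pt]$-insertion. The naive $3$-point fiber invariant $\langle [pt],[pt],H\rangle^{\mathbb{P}^{r}}_{L}=1$ contains two point insertions, which would force the strictly stronger condition $N_{D}(L)\geq 1$ in Theorem \ref{main1}; the trick is instead to absorb both point constraints into the single pulled-back insertion $\iota^{*}(\omega_{X}^{\,r})$, using that $\omega_{X}^{\,r}$ restricts to a positive multiple of the top class $H^{r}$ on each fiber. Once this reduction is set up, the rest is a routine application of Proposition \ref{fiber} and Theorem \ref{main1}.
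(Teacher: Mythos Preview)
Your proof is correct and follows precisely the paper's own approach: the one-line justification preceding the corollary states that ``the relevant insertions can be taken to be $(pt,\omega|_D^{k})$,'' which is exactly your $2$-point invariant with the second insertion realized as the restriction class $\iota^{*}(\omega_X^{\,r})$, giving $k=1$ in Theorem~\ref{main1}. Your write-up simply fleshes out the dimension count and the application of Proposition~\ref{fiber} that the paper leaves implicit.
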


McDuff also considered the case of product $\mathbb
P^k-$bundles in \cite{Mc2}.
A natural source of such $D$ is from blowing up a `non-negative'
$\mathbb P^k$ with a large trivial neighborhood. Suppose $\mathbb
P^k\subset X$
 has trivial normal bundle. Then the blow up along $\mathbb P^k$
has a divisor $D=\mathbb P^k\times \mathbb P^{n-k-1}$. The normal
bundle of $D$ along a line in $\mathbb P^k$ is trivial. Similar to
the case of $S^2\times S^2$, we can argue that
 the area of this line is minimal among all uniruled class
of $D$.
In particular, as observed by \cite{Mc2},  a symplectic $\mathbb P^1$ with
a sufficiently large product symplectic neighborhood can only exist in a uniruled
manifold.

In fact we can prove more.

\begin{cor} Suppose $S$ is a uniruled symplectic submanifold
whose minimal uniruled class has area $\eta$ and insertions all
being  restriction classes. If  $S$ has trivial symplectic
neighborhood of radius at least $\eta$. Then $X$ is uniruled.
\end{cor}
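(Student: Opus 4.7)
The plan is to reduce to an application of Theorem~\ref{main1} through a symplectic blow-up of $S$ and then invoke Theorem~\ref{blowup}. If $S$ has codimension two, it is already a divisor with trivial normal bundle and Theorem~\ref{trivial normal bundle} directly lifts the given non-zero invariant on $S$ to a non-zero uniruled invariant of $X$; so I will focus on the case when the codimension $k$ of $S$ is at least three. The hypothesis on the size of the trivial symplectic neighborhood allows a symplectic blow-up of $S$ in $X$ with parameter equal to (or arbitrarily close to) $\eta$. Denote by $\tilde{X}$ the blow-up, by $p : \tilde{X} \to X$ the blow-down, and by $D \subset \tilde{X}$ the exceptional divisor. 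The triviality of $N_S$ identifies $D = \mathbb{P}(N_S)$ with the product $S \times \mathbb{P}^{k-1}$.

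Write $\pi_S : D \to S$ and $\pi_{\mathbb{P}} : D \to \mathbb{P}^{k-1}$ for the two projections and $s : S \hookrightarrow D$ for the inclusion as a fiber of $\pi_{\mathbb{P}}$. From $p \circ \iota_D = \iota_S \circ \pi_S$ one has $\iota_D^*(p^*\alpha) = \pi_S^*(\iota_S^*\alpha)$ for every $\alpha \in H^*(X)$, so the pull-back to $D$ of a restriction class from $X$ remains a restriction class from $\tilde{X}$. Next, $A$ regarded on $D$ via $s_*$ lies in a fiber of the almost complex fibration $\pi_{\mathbb{P}}$, so Proposition~\ref{fiber} yields
\[
\langle \iota_D^* p^*\alpha_1, \ldots, \iota_D^* p^*\alpha_l, [pt] \rangle^D_{0,A}
\;=\;
\langle \iota_S^*\alpha_1, \ldots, \iota_S^*\alpha_l, [pt] \rangle^S_{0,A} \;\neq\; 0,
\]
a non-zero GW invariant of $D$ whose only non-restriction insertion is the point.

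It remains to verify the numerical hypotheses of Theorem~\ref{main1}. The normal bundle of $D$ in $\tilde{X}$ is the tautological bundle $\mathcal{O}_{\mathbb{P}(N_S)}(-1)$; as $A$ is contained in the section $s(S)$, which is a fiber of $\pi_{\mathbb{P}}$, we obtain $N_D(A) = 0$, so the inequality $k \le N_D(A)+1$ in Theorem~\ref{main1} holds with $k = 1$. Calibrating the blow-up so that the line class $\ell \subset \mathbb{P}^{k-1}$ has symplectic area $\eta$, any curve in $D = S \times \mathbb{P}^{k-1}$ has area equal to the sum of the areas of its projections to the two factors, each of which is either zero or at least $\eta$; thus $A$ is a minimal uniruled class of $D$. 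Theorem~\ref{main1} then gives that $\tilde{X}$ is uniruled, and Theorem~\ref{blowup} transports uniruledness back to $X$. The main obstacle will be the minimality verification, and if the product-decomposition argument sketched above proves insufficient, one can instead appeal to the stronger Theorem~\ref{main} in its globally indecomposable form, which only requires that $A$ not decompose as the sum of a uniruled class of $D$ and a non-zero effective class of $\tilde{X}$, a property that follows directly from the blow-up geometry.
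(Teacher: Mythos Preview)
The paper states this corollary without proof, presenting it as an immediate extension of the preceding paragraph on blowing up a trivially embedded $\mathbb P^k$; your proposal fleshes out exactly that intended argument (blow up along $S$, obtain the product exceptional divisor $D=S\times\mathbb P^{k-1}$, lift the point invariant via Proposition~\ref{fiber}, check $N_D(A)=0$, apply Theorem~\ref{main1} and then Theorem~\ref{blowup}), so the approaches coincide.

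One imprecision worth tightening: in the minimality step you assert that each factor projection of a curve in $D$ has area either zero or at least $\eta$, but this is false for the $S$-factor in general, since $S$ may carry non-uniruled effective classes of area below $\eta$. The correct argument only compares $A$ against other \emph{uniruled} classes $B=B_S+m\ell$ of $D$. If $m\geq 1$, the calibration $\omega(\ell)=\eta$ already gives $\omega_D(B)\geq\eta$. If $m=0$, then $B$ is a fiber class for $\pi_{\mathbb P}:D\to\mathbb P^{k-1}$, and Proposition~\ref{fiber} (applied to that projection, noting that $H^*(D)\to H^*(S)$ is surjective) transfers the nonzero point-invariant of $D$ in class $B$ to one of $S$; hence $B_S$ is a uniruled class of $S$ and $\omega_S(B_S)\geq\eta$. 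With this correction the minimality holds directly, and your fallback to the globally-indecomposable version (Theorem~\ref{main}) is unnecessary.
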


We will treat the more general case of `non-negative' normal bundle
in another paper on uniruled submanifolds.

\end{document}